\documentclass[12pt,a4paper]{amsart}
\usepackage{amsmath}
\usepackage{paralist}
\usepackage{graphics}
\usepackage{amsfonts,amssymb}
\usepackage{color}
\usepackage{amsthm}
\usepackage{mathrsfs}
\usepackage{cite}
\usepackage{amsmath, amsthm, amssymb, mathrsfs}
\usepackage{newtxtext, newtxmath}
\allowdisplaybreaks[4]
\calclayout

\usepackage[colorlinks=true, linkcolor=blue, citecolor=red, urlcolor=magenta]{hyperref}
\date{}

\newtheorem{Theorem}{Theorem}[section]
\newtheorem{Lemma}{Lemma}[section]
\theoremstyle{remark}
\newtheorem{Remark}{Remark}[section]

\numberwithin{equation}{section}

\usepackage[titletoc]{appendix} 

\newcommand{\appendixnumbering}{%
  \renewcommand{\thesection}{Appendix \Alph{section}}%
  \renewcommand{\theTheorem}{\Alph{section}.\arabic{Theorem}}%
  \renewcommand{\theLemma}{\Alph{section}.\arabic{Lemma}}%
  \renewcommand{\theCorollary}{\Alph{section}.\arabic{Corollary}}%
  \renewcommand{\theDefinition}{\Alph{section}.\arabic{Definition}}%
  \renewcommand{\theProposition}{\Alph{section}.\arabic{Proposition}}%
  \renewcommand{\theequation}{\thesection.\arabic{equation}}%
  \setcounter{equation}{0}
}
\begin{document}

\title{ $L^1$ Behavior of the Stokes System and the Navier--Stokes System in the Half Space}

\author{Rao Li}
\address{School of Mathematical Sciences, Shanghai Jiao Tong University, Shanghai 200240, China.}
\email{lirao0909@sjtu.edu.cn}

\author{Haitao Wang}
\address{School of Mathematical Sciences, Institute of Natural Sciences, LSC-MOE, CMA-Shanghai, Shanghai Jiao Tong University, Shanghai 200240, China.}
\email{haitallica@sjtu.edu.cn}

\author{Chunjing Xie} 
\address{School of Mathematical Sciences,  LSC-MOE, CMA-Shanghai, Shanghai Jiao Tong University, Shanghai 200240, China.}
\email{cjxie@sjtu.edu.cn}

\begin{abstract}
{In this paper,  the  detailed behavior of solutions of both the Stokes and  the Navier--Stokes system in the half space is investigated when the initial data belongs to $L^1$. We first give a detailed asymptotic expansion of the solution to the Stokes system supplemented with $L^1$ initial data. With the aid of this expansion, we provide two sufficient conditions on the initial data so that the associated solutions of the Navier--Stokes system do not belong to  $L^1$, corresponding to long time and short time behaviors, respectively. Moreover, for the $n$- dimensional Navier--Stokes system, a pointwise spatial lower bound of order $n$ is derived in a conic neighborhood of the $x_n$-axis, provided that the initial data decays faster than order $n$. One of the main difficulties to get precise form of leading order term is the non-commutativity of the Leray projection operator with the Laplacian. Our strategy is to employ the special structure of the Green tensor observed by Han \cite{2018H} and a cancellation property of the initial data. This also yields the sufficient conditions on the initial data to get precise asymptotic behavior via comparing the linear and nonlinear parts. }
\end{abstract}

\keywords{Navier-Stokes system, Stokes system, half-space, $L^1$-estimate, Green function}

\subjclass{35Q30, 35B40, 35K51}

\maketitle


\section{Introduction and main results}\label{sec1}
 We consider  the Navier--Stokes system  in  the half space $\mathbb{R}^n_+=\{x \in \mathbb{R}^n:x= (x',x_n), x_n > 0\}$ 
 with $x'=(x_1,x_2,\dots,x_{n-1})\in \mathbb{R}^{n-1}$ and $n\geq 2$ supplemented with homogeneous Dirichlet boundary condition
\begin{equation}\label{1.1}
\left\{
\begin{aligned}
    &\partial_{t} \boldsymbol{u} - \Delta \boldsymbol{u} + \boldsymbol{u} \cdot \nabla \boldsymbol{u} + \nabla p = 0 \quad \text{in } \mathbb{R}^{n}_{+} \times (0,\infty), \\
    &\nabla \cdot \boldsymbol{u} = 0 \quad \text{in } \mathbb{R}^{n}_{+} \times (0,\infty), \\
    &\boldsymbol{u}(x,t) = 0 \quad \text{on } \partial \mathbb{R}^{n}_{+} \times (0,\infty), \\
    &\boldsymbol{u}(x,t) \to 0 \quad \text{as } |x| \to \infty, \\
    &\boldsymbol{u}(x,0) = \boldsymbol{a}(x) \quad \text{in } \mathbb{R}^{n}_{+},
\end{aligned}
\right.
\end{equation}
 where the unknown functions $\boldsymbol{u} = (u_1,u_2,\dots,u_n)$ and $p$ denote the velocity and the pressure, respectively;  $\boldsymbol{a}=(a_1,a_2,\dots,a_n)$ is a given initial data assumed to satisfy a compatibility condition: $\nabla\cdot \boldsymbol{a} = 0$ in $\mathbb{R}^n_+$ and the normal component of $\boldsymbol{a}$ equals zero on $\partial\mathbb{R}^n_+$.

In order to study (\ref{1.1}), one of the key ingredients is to study the following Stokes system in the half space supplemented with homogeneous Dirichlet boundary condition\begin{equation}\label{1.2}
\left\{
\begin{aligned}
    &\partial_{t} \boldsymbol{v} - \Delta \boldsymbol{v} + \nabla \pi = 0 \quad \text{in } \mathbb{R}^n_+ \times (0,\infty), \\
    &\nabla \cdot \boldsymbol{v} = 0 \quad \text{in } \mathbb{R}^n_+ \times (0,\infty), \\
    &\boldsymbol{v}(x,t) = 0 \quad \text{on } \partial \mathbb{R}^n_+ \times (0,\infty), \\
    &\boldsymbol{v}(x,0) = \boldsymbol{a}(x) \quad \text{in } \mathbb{R}^n_+.
\end{aligned}
\right.
\end{equation}

   In \cite{1987U}, Ukai gave the solution formula for problem (\ref{1.2}) in terms of Riesz potentials and established $L^r(\mathbb{R}^n_+)$ estimates of $\boldsymbol{v}$ and its gradient as long as the initial data $\boldsymbol{a}$ belongs to $L^q(\mathbb{R}^n_+)$ for $1<q<r<\infty$. Fujigaki and Miyakawa \cite{2001FM} improved the estimates of high order gradients of $\boldsymbol{v}$
 provided either \(1 \leq q < r \leq \infty\), or \(1 < q \leq r < \infty\). The decay properties of solutions and their gradient of the Stokes system (\ref{1.2}) in $L^1(\mathbb{R}^n_+)$, Hardy space and $L^\infty(\mathbb{R}^n_+)$ was studied in \cite{1999GSS,2006B,2008B}. The pointwise decay properties of solutions in different cases were considered in \cite{2017CJS}. For weighted $L^r$ estimates of solutions, we refer to \cite{2002B,2010J1,2010J2,2011Hw,2013KK,2014Hw,2018Hw,2024H}.
 Recently, the unrestricted Green tensor of (\ref{1.2}) and its applications were established in \cite{2023KLLT,2025KLLT}, which provided a variety of new decay results and improvements upon many existing results.

Desch, Hieber, and Prüss \cite{2001DHP} studied the corresponding resolvent problem to demonstrate the existence of a divergence-free vector field $\boldsymbol{a} \in L^1(\mathbb{R}^n_+)$ such that $\boldsymbol{v}(\cdot,t)\notin L^1(\mathbb{R}^n_+)$. Han \cite{2024H} also gave a counterexample where $\boldsymbol{v}$ does not belong to $ L^1(\mathbb{R}^n_+)$ even if $\boldsymbol{a}\in L^1(\mathbb{R}^n_+)$. A natural question is whether we can give a detailed characterization of $\boldsymbol{v}$ so that the non-integrable part of $\boldsymbol{v}$ can be addressed clearly. Our first main goal is to provide an explicit description of the solution to (\ref{1.2}) in $L^1(\mathbb{R}^n_+)$   when the initial data belongs to $L^1(\mathbb{R}^n_+)$ and satisfies some weighted estimates. 

 Before we state our main result, we first introduce the following notations.

Let $d = 1,2,3,\dots,n$ and $y = (y_1, y_2, \dots, y_d)$. Denote the heat kernel by
\[
G_t^{(d)}(y) = (4\pi t)^{-\frac{d}{2}} e^{-\frac{|y|^2}{4t}},
\]
and for $x \in \mathbb{R}^n$,
\[
G_t(x) = G_t^{(n)}(x), \qquad
E(x) =
\begin{cases}
\dfrac{1}{n(n-2)\omega_n |x|^{n-2}}, & n \geq 3,\\[6pt]
-\dfrac{1}{2\pi} \log |x|, & n = 2,
\end{cases}
\]
where $\omega_n$ denotes the volume of the unit ball in $\mathbb{R}^n$ and $E(x)$ is the fundamental
solution of $-\Delta$.

Define the matrix function $\mathscr{L}(x,t)=(L_{ij}(x,t))_{i,j=1,\dots,n}$\begin{equation}\label{L_{ij}}
    L_{ij}(x,t):=4(1-\delta_{nj})\partial_{x_j}\int_0^\infty \partial_{x_i}[G_{t+\tau}^{(n-1)}(x')G_\tau^{(1)}(x_n)]\,d\tau,
\end{equation}
where $\delta_{nj}$ is the Kronecker delta, i.e., $\delta_{nj}=1$ if $n=j$ and $\delta_{nj}=0$ otherwise.

Define\begin{equation}\label{1.6}M^*_i(x,y,t):=-4\int_{0}^{x_n}\int_{\mathbb{R}^{n-1}}\frac{\partial E(x-z)}{\partial x_{i}}G_{t}(z-y^{*})\,dz\end{equation}
and \begin{equation}\label{domain}
B(0,r) := \left\{ x \in \mathbb{R}^n \mid |x| < r \right\}, \qquad
\Lambda_\sigma := \left\{ x \in \mathbb{R}^n_+ \mid x_n > \sigma |x'|,\ \sigma > 0 \right\}.
\end{equation}

\begin{Theorem}\label{thm:1.1}
    Assume $\boldsymbol{a}\in L^1(\mathbb{R}^n_+)$, $a_n|_{\partial \mathbb{R}^n_+}=0$, $\nabla\cdot \boldsymbol{a}=0$ in $\mathbb{R}^n_+\ (n\geq 2)$ and
    $$\int_{\mathbb{R}^n_+}y_n |\boldsymbol{a}(y)|\,dy+\int_{\mathbb{R}^n_+}|y'|^2|a_n(y)|\,dy<+\infty,$$ then  the following statements hold.
    \begin{enumerate}
    \item For $i=1,2,...,n$ and $t>0$, the solution $\boldsymbol{v}(x,t)$ of  (\ref{1.2}) satisfies\begin{equation}\label{linear}
        \left\| v_i(x,t)+\sum_{j=1}^{n-1}L_{ij}(x,t)\int_{\mathbb{R}^n_+}G_t^{(1)}(y_n)y_ja_n(y)\,dy\right\|_{L^1_x(\mathbb{R}^n_+)}\leq Ct^{-\frac{1}{2}}(1+t^{-\frac{1}{2}})
    \end{equation}
 and \begin{equation}\label{high order}\begin{aligned}
        &\lim_{t\to+\infty}t^{\frac{1}{2}}\Biggl\Vert v_i+\sum_{j=1}^{n-1}L_{ij}(x,t)\int_{\mathbb{R}^n_+}G_t^{(1)}(y_n)y_ja_n(y)\,dy\\&+2\partial_{x_n}G_t(x)\int_{\mathbb{R}^n_+}y_na_i(y)\,dy+\sum_{j=1}^{n-1}\partial_{x_n}\partial_{x_j}M_i^*(x,0,t)\int_{\mathbb{R}^n_+}y_ja_n(y)\,dy\Biggr\Vert_{L^1_x(\mathbb{R}^n_+)}=0 .
    \end{aligned}\end{equation}
     \item For $i=1,2,\dots,n,j=1,2,\dots,n-1$,  $t>0$ and $x\in \Lambda_\sigma\cap \left(B(0,\sqrt{t})\right)^c$ with $\sigma$ large enough, there exist two constants $C, \widetilde{C} > 0$, independent of $t$, such that \begin{equation}\label{pointwise}\begin{aligned}
C\frac{|x_ix_j|}{|x|^{n+2}} &\leq |L_{ij}(x,t)| \leq \widetilde{C}\frac{|x_ix_j|}{|x|^{n+2}}, && \text{for } i \neq j,  \\
C\frac{1}{|x|^n} &\leq |L_{ij}(x,t)| \leq \widetilde{C}\frac{1}{|x|^n}, && \text{for } i = j. 
\end{aligned}\end{equation}
    Moreover, we have $\boldsymbol{v}(\cdot,t)\in L^1(\mathbb{R}^n_+)$ if and only if \begin{equation}\label{condition}\int_{\mathbb{R}^n_+}G_t^{(1)}(y_n)y_ja_n(y)\,dy=0\quad \text{for each }j=1,2,...,n-1.\end{equation}\end{enumerate}

\end{Theorem}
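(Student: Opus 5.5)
Our starting point is the explicit Green tensor of the Stokes system in $\mathbb{R}^n_+$, in the form obtained by Han \cite{2018H}. It splits as
\[
G_{ij}(x,y,t)=\delta_{ij}\bigl(G_t(x-y)-G_t(x-y^*)\bigr)+G^*_{ij}(x,y,t),
\]
where $y^*=(y',-y_n)$. The correction $G^*_{ij}$ is built from $\partial_{x_j}$-type derivatives of $\int_0^\infty\partial_{x_i}[G^{(n-1)}_{t+\tau}(x'-y')G^{(1)}_\tau(x_n+y_n)]\,d\tau$ (with the factor $4(1-\delta_{nj})$) and from $M^*_i$ in (\ref{1.6}). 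We then write $v_i(x,t)=\sum_j\int_{\mathbb{R}^n_+}G_{ij}(x,y,t)a_j(y)\,dy$ and handle each piece separately.

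\textbf{Heat-type part.} The key observation is that the reflected heat kernel vanishes at $y_n=0$. Taylor expansion in $y_n$ gives
\[
G_t(x-y)-G_t(x-y^*)\approx 2y_n\,\partial_{x_n}G_t(x-y).
\]
This piece therefore has $L^1_x$ norm bounded by $Ct^{-1/2}\int y_n|\boldsymbol{a}|\,dy$. Expanding one step further in $y'$ as well, and using dominated convergence in $y$, we identify the leading term $2\partial_{x_n}G_t(x)\int y_na_i\,dy$ up to $o(t^{-1/2})$ in $L^1$.

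\textbf{Correction part.} For $G^*_{ij}$ we use the cancellation of the data. From $\nabla\cdot\boldsymbol{a}=0$ and $a_n|_{y_n=0}=0$, integrating by parts in $y'$ turns the terms carrying $a_j$, $j<n$, into terms carrying $y_ja_n$ (this is exactly where the non-commutativity with the Leray projector is bypassed). Concretely, $\int\partial_{y_j}K\,a_j=-\int K\,\partial_{y_j}a_j=\int K\,\partial_{y_n}a_n$, and a further integration by parts in $y_n$ produces $y$-moments.

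After this reduction, the part of $G^*$ whose kernel does not decay in $x_n$ integrably produces exactly
\[
-\sum_{j<n}L_{ij}(x,t)\int G^{(1)}_t(y_n)\,y_ja_n\,dy .
\]
Here $y'$ has been frozen at $0$ inside $L_{ij}$, while the $y_n$-dependence is retained through $G^{(1)}_t(y_n)$; this works because the heat semigroup in $x_n$ transfers $G_\tau(x_n+y_n)$ into the $\tau$-shift. The remaining errors are differences
\[
K(x-y')-K(x)-y'\cdot\nabla' K(x),
\]
whose $L^1_x$ norms are bounded via the hypothesis $\int|y'|^2|a_n|<\infty$ and the scaling $\|\nabla^k K(\cdot,t)\|_{L^1}\lesssim t^{-k/2}$. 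This yields (\ref{linear}) with the rate $t^{-1/2}(1+t^{-1/2})$. The $M^*$-term gives $\partial_{x_n}\partial_{x_j}M^*_i(x,0,t)\int y_ja_n$, again up to $o(t^{-1/2})$ by dominated convergence, which proves (\ref{high order}).

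\textbf{Pointwise bounds (\ref{pointwise}).} We compute $L_{ij}$ directly. For $i\neq j$, $i,j<n$, differentiating under the integral gives
\[
L_{ij}=4x_ix_j\int_0^\infty\frac{1}{4(t+\tau)^2}\,G^{(n-1)}_{t+\tau}(x')\,G^{(1)}_\tau(x_n)\,d\tau .
\]
In the cone $\Lambda_\sigma$ with $|x|\ge\sqrt t$ we have $x_n\sim|x|$. The substitution $\tau=x_n^2s$ shows that the integral is comparable to $|x|^{-n-2}$, uniformly in $t\le|x|^2$: the factor $e^{-|x'|^2/4(t+\tau)}$ is bounded below because $|x'|\le x_n/\sigma$. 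For $i=j$ there is an extra term $-\tfrac{1}{2(t+\tau)}$, which dominates once $\sigma$ is large, giving $|L_{jj}|\sim|x|^{-n}$. For $i=n$ the $\partial_{x_n}$ derivative acting on $G^{(1)}$ gives $x_n/\tau$ factors, and the same scaling applies.

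\textbf{The $L^1$ criterion.} Since $|x|^{-n}$ is not integrable at infinity on $\Lambda_\sigma$, (\ref{linear}) shows that $\boldsymbol{v}(\cdot,t)\in L^1$ if and only if $\sum_j L_{ij}c_j\in L^1$ for every $i$, where $c_j=\int G^{(1)}_t(y_n)y_ja_n\,dy$. If all $c_j=0$ this holds trivially. Conversely, take $i=j_0$ with $c_{j_0}\neq0$. The diagonal term is of size $|x|^{-n}$, while the off-diagonal terms are controlled by $|x_ix_j|/|x|^{n+2}\le\sigma^{-2}|x|^{-n}$. Hence for $\sigma$ large (with $c$ normalized by $\max|c_j|$, choosing $j_0$ to attain the maximum) the sum is bounded below by $c|x|^{-n}$ on the cone, so it is not integrable.

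\textbf{Main obstacle.} I expect the hardest step to be the second one: rewriting the correction part via the divergence cancellation so that precisely the $L_{ij}$ term with weight $G^{(1)}_t(y_n)$ emerges, with an $L^1$-controlled remainder. This requires careful tracking of the boundary terms and of the non-integrable tails of the $\tau$-integral.
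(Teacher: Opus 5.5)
Your heat-part argument (up to a sign: $G_t(x-y)-G_t(x-y^*)\approx-2y_n\,\partial_{x_n}G_t(x'-y',x_n)$), your pointwise bounds on $L_{ij}$, and your deduction of the $L^1$ criterion from (\ref{linear}) are fine. The gap is in the correction part, that is, in (\ref{linear}) itself, where two ingredients the paper's proof depends on are missing.

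\textbf{The zero-flux property.} Once $\nabla\cdot\boldsymbol{a}=0$ and $a_n|_{y_n=0}=0$ give $v^{(2)}_i=\int_{\mathbb{R}^n_+}\partial_{y_n}M^*_i(x,y,t)\,a_n(y)\,dy$, a further integration by parts in $y_n$ produces no $y'$-moments at all. What lets you subtract $\partial_{y_n}M^*_i(x,0',y_n,t)$ and Taylor expand to second order in $y'$ (your remainder $K(x-y')-K(x)-y'\cdot\nabla'K(x)$) is the identity $\int_{\mathbb{R}^{n-1}}a_n(y',y_n)\,dy'=0$ for a.e.\ $y_n$ (Lemma \ref{A.1}). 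Without it the expansion starts with $\int_0^\infty\partial_{y_n}M^*_i(x,0',y_n,t)\bigl(\int_{\mathbb{R}^{n-1}}a_n\,dy'\bigr)dy_n$, whose kernel is not integrable in $x$. You never state or use this identity. If you instead meant to expand against $a_j$, $j<n$, using $\int_{\mathbb{R}^{n-1}}a_j\,dy'=\partial_{y_n}\int_{\mathbb{R}^{n-1}}y_ja_n\,dy'$, then the first-order remainder requires $\int|y'||a_j|\,dy<\infty$, which is not assumed.

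\textbf{The extraction of the leading term.} You assert rather than derive that $-\sum_jL_{ij}(x,t)\int G^{(1)}_t(y_n)y_ja_n\,dy$ emerges, and the mechanism you suggest does not apply. In
\[
M^*_i(x,0',y_n,t)=-4\int_0^\infty\partial_{x_i}G^{(n-1)}_{t+\tau}(x')\int_0^{x_n}G^{(1)}_\tau(x_n-z_n)\,G^{(1)}_t(z_n+y_n)\,dz_n\,d\tau
\]
the semigroup property is available only in $x'$. In the normal variable the convolution is truncated to $(0,x_n)$, so there is no $\tau$-shift. Nor can you isolate the non-integrable part by inspection: the first-order kernel $\nabla_{x'}\partial_{y_n}M^*_i(x,0',y_n,t)$ is only bounded by $Ct^{-1/2}(|x'|+x_n+y_n+\sqrt t)^{-n}$, which is not in $L^1_x$.

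The missing idea is the commutation identity of Lemma \ref{lem:2.4}:
\[
\partial_{y_n}M^*_i(x,0',y_n,t)=\partial_{x_n}M^*_i(x,0',y_n,t)+4G^{(1)}_t(y_n)\int_0^\infty\partial_{x_i}\bigl[G^{(n-1)}_{t+\tau}(x')G^{(1)}_\tau(x_n)\bigr]\,d\tau .
\]
It is obtained as follows:
\begin{enumerate}
\item differentiate the truncated convolution in $x_n$ and write $\partial_{x_n}G^{(1)}_\tau(x_n-z_n)=-\partial_{z_n}G^{(1)}_\tau(x_n-z_n)$;
\item integrate by parts in $z_n$, using $\partial_{z_n}G^{(1)}_t(z_n+y_n)=\partial_{y_n}G^{(1)}_t(z_n+y_n)$;
\item observe that the terms at $z_n=x_n$ cancel and the boundary term at $z_n=0$ is exactly $G^{(1)}_\tau(x_n)G^{(1)}_t(y_n)$.
\end{enumerate}
Trading $\partial_{y_n}$ for $\partial_{x_n}$ replaces the factor $t^{-1/2}$ in (\ref{2.9}) by $(x_n+\sqrt t)^{-1}$. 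Hence $\nabla_{x'}\partial_{x_n}M^*_i$ contributes at most $Ct^{-1/2}\int|y'||a_n|\,dy$ in $L^1$, while $\nabla_{x'}$ of the extra term is precisely $L_{ij}(x,t)G^{(1)}_t(y_n)$ tested against $y_ja_n$.

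Without these two steps your outline establishes neither the exact form of the leading term nor the $L^1$ control of the remainder in (\ref{linear}), and the criterion in part (2) rests on (\ref{linear}).
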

\begin{Remark}
     The estimate (\ref{linear}) and part (2) in Theorem \ref{thm:1.1} provide a description of the non-integrable part of the solution.
\end{Remark}

\begin{Remark}
   (\ref{high order}) gives an expansion of the next order terms in $L^1(\mathbb{R}^n_+)$, which is formally similar to the expansion in \cite{2001FM} for $L^r(\mathbb{R}^n_+)$ with $1<r< \infty$, while their expansion concerns the first-order terms.
\end{Remark}

It was first posed by Leray in \cite{1934L}  whether the  weak solutions of the Cauchy problem  for the Navier--Stokes system decay to zero in $L^2$ as $t$ tends to infinity when the initial data belongs to $L^2$. Extensive studies addressed decay properties in last several decades. For the solutions in the whole space $\mathbb{R}^n$, Schonbek made a series of contributions to the energy decay of solutions \cite{1985S,1991S,1992S}. The pointwise estimate of the strong solution 
\[|\boldsymbol{u}(x,t)|\leq \frac{C}{(1+|x|)^\alpha(1+t)^\frac{\beta}{2}},\quad  \ 0<\alpha+\beta\leq n+1
\]
was established by Miyakawa in \cite{2002M}.  Brandolese \cite{2004B} investigated the enhancement of decay rates induced by symmetry properties of initial data. Brandolese and Vigneron \cite{2007BV} gave a new asymptotic expansion which shows the algebraic decay of order $n+1$ when the initial data has mild decay. They have provided two sufficient conditions for $|\boldsymbol{u}|>\frac{C(t)}{|x|^{n+1}}$ in a short time. For the solutions in the half space $\mathbb{R}^n_+$, Fujigaki and Miyakawa \cite{2001FM} applied Ukai's formula to obtain the solution profile of the Stokes and Navier--Stokes systems in $L^r(\mathbb{R}^n_+)$ for $1<r<\infty$. If $n\geq 3,$ Crispo and Maremonti \cite{2006CM} showed the local existence of solution satisfying \[
|\boldsymbol{u}(x,t)| \leq \frac{C}{(1+|x|)^\alpha (1+t)^{\beta/2}},
\quad \alpha + \beta = \mu\ \text{for } \mu \in (\tfrac12,\, n).\]when $(1+|x|)^{\mu}\boldsymbol{a}\in L^\infty(\mathbb{R}^n_+)$ and for $\mu\in [1,n)$ the global  existence holds for $\mu\in [1,n)$ as long as $(1+|x|)^{\mu}\boldsymbol{a}$ is small enough in $L^\infty(\mathbb{R}^n_+)$. The local existence was improved in \cite{2017CJ} for $n\geq 2$ and $\mu\in(0,n]$. Utilizing the unrestricted Green tensor, the behavior of solutions has also been considered in \cite{2023KLLT,2025KLLT} in various function spaces, including uniformly local $L^r$, space with mixed-type pointwise decay, and space with pointwise decay alongside boundary vanishing. For $1 \leq r \leq \infty$, the $L^r$ estimates of higher-order derivatives of solutions, as well as the weighted $L^r$ estimates of solutions and their derivatives, can be found in \cite{2009HW,2012H,2014Hw,2016H,2018Hw,2024H}. Furthermore, Han \cite{2016H} also established the decay estimate of solutions in $L^{1}(\mathbb{R}^{n-1}\times(0,\sqrt{t}))$.

Similar to the linear case, $L^1(\mathbb{R}^n_+)$ also serves as a critical space for the solution of the nonlinear problem (\ref{1.2}). Subsequently, Han \cite{2014H} characterized the behavior of solutions to the Navier--Stokes system in $L^1(\mathbb{R}^n_+)$ under the \emph{tangential parity condition}. Han \cite{2018H} further elucidated the $L^1$ properties of the nonlinear term by extracting coefficients that depend on $\boldsymbol{u}$, each multiplied by a characteristic non-$L^1$ factor. In addition to solution of the Stokes system (\ref{1.2}) and this expansion, the remainder terms were shown to exhibit a decay rate of order $t^{-\frac{\alpha}{2}}$, where $\alpha$ could be any number which belongs to $(0,1)$. More recently, Han \cite{2022H} exploited the net force on the boundary to derive a full expansion of the non-$L^1$ part of the solution, which carries a clear physical interpretation. If the initial data just belongs to $L^1$, all these studies demonstrate that in many cases, the solution may fail to belong to $L^1(\mathbb{R}^n_+)$ unless the initial data satisfies a specific symmetry condition. A natural question therefore arises: can one identify suitable conditions on the initial data that characterize the non-$L^1$ behavior of the solution? 
Our second main result, Theorem \ref{thm:1.2}, addresses this question by providing explicit initial conditions under which the solution $\boldsymbol{u}$ of problem (\ref{1.1}) fails to belong to $L^1(\mathbb{R}^n_+)$. Moreover, we establish a lower bound on its algebraic decay of order $n$, thereby demonstrating that the corresponding upper bound $|\boldsymbol{u}| \leq C(1+|x|)^{-n}$ obtained in \cite{2017CJ} is in fact optimal.

For given $\boldsymbol{a}(x)$ and $\boldsymbol{u}(x,t)$, define the vector function $\mathscr{A}[\boldsymbol{a},\boldsymbol{u}](t)=\left(\mathscr{A}_j[\boldsymbol{a},\boldsymbol{u}](t)\right)_{j=1,2,\dots,n}$
\begin{equation}\label{1.8}
\mathscr{A}_j[\boldsymbol{a},\boldsymbol{u}](t):=-\int_{\mathbb{R}^n_+}G_t^{(1)}(y_n)y_j a_n(y)\,dy
+\int_0^t\int_{\mathbb{R}^n_+}G_{t-s}^{(1)}(y_n)u_n u_j(y,s)\,dy\,ds
\end{equation}and
\begin{equation}\label{qingk}
    K(\boldsymbol{a}) :=
    \begin{cases}
        \|\boldsymbol{a}\|^2_{L^1(\mathbb{R}^n_+)} + \|\boldsymbol{a}\|^4_{L^1(\mathbb{R}^n_+)} + \|\boldsymbol{a}\|^2_{L^2(\mathbb{R}^n_+)} + \|\boldsymbol{a}\|^4_{L^2(\mathbb{R}^n_+)}, & n \geq 3, \\[2.5ex]
        \begin{aligned}
            &\|\boldsymbol{a}\|^2_{L^1(\mathbb{R}^n_+)} + \|\boldsymbol{a}\|^4_{L^1(\mathbb{R}^n_+)} + \|\boldsymbol{a}\|^2_{L^2(\mathbb{R}^n_+)} + \|\boldsymbol{a}\|^4_{L^2(\mathbb{R}^n_+)} \\
            &\quad + \left(\int_{\mathbb{R}^n_+} y_n |\boldsymbol{a}(y)| \, dy\right)^2 + \left( \int_{\mathbb{R}^n_+} y_n |\boldsymbol{a}(y)| \, dy \right)^4,
        \end{aligned}
        & n = 2.
    \end{cases}
\end{equation}

\begin{Theorem}\label{thm:1.2} Assume the initial data $\boldsymbol{a} \in L^1(\mathbb{R}^n_+)\cap L^n(\mathbb{R}^n_+)\ (n\geq 2)$ satisfies
\begin{equation}\label{origin}a_n|_{\partial\mathbb{R}^n_+} = 0,\ \nabla \cdot \boldsymbol{a} = 0,\quad \text{and}\quad
x_n \boldsymbol{a}, |x'|^2 a_n \in L^1(\mathbb{R}^n_+).\end{equation}
Then there exists a $T>0$ and a unique strong  solution $\boldsymbol{u}\in C([0,T);L^n_{\sigma}(\mathbb{R}^n_+))$ of (\ref{1.1}). Denote
\[
\mathscr{R}(x,t)=\boldsymbol{u}(x,t)-\mathscr{L}(x,t)\mathscr{A}[\boldsymbol{a},\boldsymbol{u}](t),
\]
where  $\mathscr{L}$ is defined in (\ref{L_{ij}}) and $\left(\mathscr{L}(x,t)\mathscr{A}[\boldsymbol{a},\boldsymbol{u}](t)\right)_i = \sum_{j=1}^{n} L_{ij}(x,t) \mathscr{A}_j[\boldsymbol{a},\boldsymbol{u}](t)$ for $i = 1, 2, \dots, n$.

If $\mathscr{R}(\cdot,t)\in L^1(\mathbb{R}^n_+)$, then $\boldsymbol{u}(\cdot,t) \in L^1(\mathbb{R}^n_+)$ if and only if $\mathscr{A}_j[\boldsymbol{a},\boldsymbol{u}](t) = 0$ for each $j = 1,2, \dots, n-1$.
Furthermore, we have $\mathscr{R}(\cdot,t)\in L^1(\mathbb{R}^n_+)$ in  following three cases.

\begin{enumerate}
    \item  \textbf{(Long time behavior)} If $\boldsymbol{a}$ satisfies\[(1 + |x|)\boldsymbol{a} \in L^1(\mathbb{R}^n_+)\quad \text{and}\quad|x|\boldsymbol{a}, (1 + |x|)\nabla \boldsymbol{a} \in L^2(\mathbb{R}^n_+),\]
and additionally $\|\boldsymbol{a}\|_{L^n(\mathbb{R}^n_+)}$ is sufficiently small when $n \geq 3$,
then there exists a global solution $\boldsymbol{u}$ of (\ref{1.1}) and $\mathscr{R}(x,t)$ satisfies for
 $t > 0$ 

\begin{equation}\label{1.10}
\|\mathscr{R}(\cdot,t)\|_{L^1(\mathbb{R}^n_+)} \leq Ct^{-\frac{1}{2}}(1 + t^{-\frac{1}{2}}).
\end{equation}

Moreover, there exists an $\epsilon > 0$ if for some $j=  1,2, \dots, n-1$, $\int_{\mathbb{R}^n_+} y_j a_n(y)\,dy \neq 0$ and

\begin{equation}\label{epsilon}
\frac{K(\boldsymbol{a})} {\left|\int_{\mathbb{R}^n_+} y_j a_n(y)\,dy\right|} < \epsilon,
\end{equation}
then there exists a $T_1 > 0$ such that  $\boldsymbol{u}(\cdot,t) \notin L^1(\mathbb{R}^n_+)$ for $t > T_1$.
    
    \item  \textbf{(Pointwise behavior)} If $\boldsymbol{a}$ satisfies \begin{equation}\label{cond:decay}
|\boldsymbol{a}(x)| \leq \frac{C}{(1+|x|)^{\theta}} \quad \text{for some } \theta > n,
\end{equation}
then there exists a $T_2\in (0,T]$ such that (\ref{1.10}) holds for $t\in(0,T_2)$. 
Moreover,  for  $t\in (0,T_2)$, there exist $C(t)>0$ and $M(t) > 1$ such that for $x\in \Lambda_{M(t)}\cap \left(B(0,M(t))\right)^c$
\begin{equation}\label{lower}|\boldsymbol{u}(x,t)|\geq\frac{C(t)}{|x|^n}\end{equation}if $\mathscr{A}_j[\boldsymbol{a},\boldsymbol{u}](t)\neq 0$ for some $j=1,2,\dots,n-1$.
    \item  \textbf{(Short time behavior)} If  $\boldsymbol{a}$ satisfies \begin{equation}\label{p0}|\boldsymbol{a}(x)| \leq \frac{Cx_n^{b_1}}{(1+x_n)^{b_1}(1+|x|)^{\bar{b}}}\quad \text{for } \bar{b} > n \text{ and } b_1 \in [0, 1],\end{equation}then there exists a $T_3\in (0,T]$ such that (\ref{1.10}) holds for $t\in(0,T_3)$. Moreover, if $a_n$ satisfies for some $b_2 \in [0, 2b_1+2)$, $\epsilon_0,C > 0$ and some $j=1,2,...,n-1$,\begin{equation}\label{condition1}
\int_{\mathbb{R}^{n-1}} x_j a_n(x) \, dx' \geq C x_n^{b_2} \quad \text{for all } x_n \in (0, \epsilon_0),
\end{equation}
or
\begin{equation}\label{condition2}
\int_{\mathbb{R}^{n-1}} x_j a_n(x) \, dx' \leq -C x_n^{b_2} \quad \text{for all } x_n \in (0, \epsilon_0),\end{equation}  then there exists a $T_4>0$ such that for any $t\in (0,T_4)$, $\boldsymbol{u}(\cdot,t)\notin L^1(\mathbb{R}^n_+)$.
\end{enumerate}

\end{Theorem}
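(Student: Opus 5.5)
The proof writes $\boldsymbol u$ through the Duhamel formula with the half-space Green tensor and splits it as
\[
\boldsymbol u(t)=\boldsymbol v(t)-\int_0^t \mathbb{G}(t-s)\nabla\cdot(\boldsymbol u\otimes\boldsymbol u)(s)\,ds ,
\]
where $\boldsymbol v$ solves (\ref{1.2}). Local existence and uniqueness in $C([0,T);L^n_\sigma)$ are classical (Ukai's formula together with the $L^q$–$L^r$ estimates of \cite{2001FM}); in case (1) global existence follows from smallness when $n\ge3$, and from the usual $L^2$ energy argument when $n=2$.

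\textbf{Structure of the nonlinear term.} The key structural input is Han's decomposition of the Green tensor \cite{2018H}. For a divergence-form source $\nabla\cdot F$ with $F=\boldsymbol u\otimes\boldsymbol u$, the only non-$L^1$ contribution comes from the part of the kernel that produces $L_{ij}$. Its coefficient is $\int_0^t\int G^{(1)}_{t-s}(y_n)F_{nj}\,dy\,ds$; this is obtained after integrating by parts in $y_j$, which is legitimate because $u_n=0$ on the boundary.

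Combining this with Theorem \ref{thm:1.1}(1) for the linear part gives $\boldsymbol u=\mathscr L\mathscr A+\mathscr R$. The equivalence statement is then immediate: if $\mathscr R\in L^1$, then $\boldsymbol u\in L^1$ iff $\mathscr L\mathscr A\in L^1$. By the two-sided bounds (\ref{pointwise}) on the cone $\Lambda_\sigma$, the diagonal entries behave like $|x|^{-n}$ there. Hence $\sum_j L_{ij}\mathscr A_j$ fails to be integrable on the cone unless $\mathscr A_j=0$ for all $j\le n-1$. To rule out cancellation between the columns, I would use the $i=j$ row: its $|x|^{-n}$ term dominates the $|x_ix_j|/|x|^{n+2}$ terms on a thinner subcone where $|x_i|\ll|x_n|$.

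\textbf{Case (1).} To prove (\ref{1.10}), bound the remainder of the nonlinear term in $L^1$. I would use the weighted decay of $\boldsymbol u$: the assumptions on $(1+|x|)\boldsymbol a$ and $\nabla\boldsymbol a$ give $\|\boldsymbol u\|_{L^2}\lesssim (1+t)^{-n/4}$, $\|\boldsymbol u\|_{L^1}\lesssim 1$ and $\|x_n\boldsymbol u\|_{L^1}\lesssim(1+t)^{1/2}$. Together with the kernel estimates for the remainder part of the Green tensor, these should yield the rate $t^{-1/2}(1+t^{-1/2})$.

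For non-integrability at large times, write
\[
\mathscr A_j(t)=-\int y_ja_n\,dy+o(1)+N_j(t), \qquad |N_j(t)|\le C K(\boldsymbol a).
\]
The $o(1)$ comes from $G^{(1)}_t(y_n)\to$ its value at $0$, rescaled, using $y_n\boldsymbol a\in L^1$. The bound on $N_j$ follows from $\int_0^t\|G_{t-s}^{(1)}\|_\infty\|\boldsymbol u\|_2^2\,ds$, split at $s=t/2$; the powers in $K$ reflect the constants in the energy and $L^1$ bounds. I need to recheck the normalisation: $G^{(1)}_t(y_n)\approx(4\pi t)^{-1/2}$, so both terms in $\mathscr A_j$ carry a common factor $t^{-1/2}$, and that factor is what gets compared. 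Condition (\ref{epsilon}) then makes $\mathscr A_j(t)\neq0$ for $t>T_1$.

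\textbf{Cases (2) and (3).} Here I would work in the weighted $L^\infty$ framework of \cite{2017CJ}. For $\theta>n$, short-time existence holds with $|\boldsymbol u|\le C(1+|x|)^{-n}$, so $|\boldsymbol u|^2\lesssim(1+|x|)^{-2n}$ is well integrable; the same kernel bounds give $\mathscr R\in L^1$ and moreover $|\mathscr R(x,t)|=o(|x|^{-n})$ on the cone. The lower bound (\ref{lower}) then follows from (\ref{pointwise}) for $|x|>M(t)\ge\sqrt t$.

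For case (3), as $t\to0$ the leading part of $\mathscr A_j$ is
\[
-\int G_t^{(1)}(y_n)y_ja_n\,dy=-\int_0^\infty G_t^{(1)}(y_n)\Phi_j(y_n)\,dy_n,\qquad \Phi_j(y_n):=\int_{\mathbb R^{n-1}}y_ja_n\,dy'.
\]
Condition (\ref{condition1}) gives the lower bound $\gtrsim t^{b_2/2}$ for the $(0,\epsilon_0)$ portion, while the portion beyond $\epsilon_0$ is exponentially small. The nonlinear part is $O\bigl(\int_0^t(t-s)^{-1/2}\|u_nu_j\|_{L^1}\,ds\bigr)$. The boundary vanishing $|\boldsymbol a|\lesssim x_n^{b_1}$ propagates for short times, so near the boundary $u_nu_j\lesssim x_n^{1+b_1}$ (since $u_n$ vanishes to one extra order by incompressibility). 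This yields a bound $o(t^{b_1+1})$ or similar for the nonlinear part, and the restriction $b_2<2b_1+2$ is exactly what makes the linear part dominate.

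\textbf{Main obstacle.} The hardest step is the precise extraction in the nonlinear term: isolating $\int G^{(1)}_{t-s}u_nu_j$ as the exact non-$L^1$ coefficient, with a remainder that is genuinely $L^1$ at the stated rate. The difficulty is that the Leray projection does not commute with the Laplacian, so I would lean fully on Han's Green tensor structure \cite{2018H}. A second delicate point is propagating boundary vanishing in case (3) for the nonlinear solution.
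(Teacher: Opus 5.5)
Your overall architecture matches the paper's: the Green-tensor Duhamel formula, extraction of $L_{ij}$ with coefficient $\int_0^t\int G^{(1)}_{t-s}u_nu_j$, the cone bounds (\ref{pointwise}) for the equivalence and for (\ref{lower}), and a comparison of the linear and nonlinear parts of $\mathscr A_j$. Your handling of the equivalence, and of the linear lower bound in Case (3), is fine.

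The genuine gap is the remainder estimate (\ref{1.10}). The bounds you list, $\|\boldsymbol u(t)\|_{L^1}\lesssim 1$ and $\|x_n\boldsymbol u(t)\|_{L^1}\lesssim(1+t)^{1/2}$, are false here. The Stokes semigroup is not bounded on $L^1(\mathbb{R}^n_+)$, and the theorem you are proving asserts $\boldsymbol u(\cdot,t)\notin L^1$ for $t>T_1$, since the leading term is $\sim|x|^{-n}$ on a cone. Because the source is $\boldsymbol u\otimes\boldsymbol u$, what you need are moments of $|\boldsymbol u|^2$:
\begin{itemize}
\item the improved decay $\|\boldsymbol u\|_{L^2}\lesssim(1+t)^{-(n+2)/4}$, which comes from $x_n\boldsymbol a\in L^1$;
\item the fractional weighted bound $\||x|^{\alpha/2}\boldsymbol u\|_{L^2}\lesssim(1+t)^{-(n-\alpha)/4}$ with $0<\alpha<1$, which is what the hypotheses $|x|\boldsymbol a,(1+|x|)\nabla\boldsymbol a\in L^2$ are for.
\end{itemize}
The extraction mechanism is also missing. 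In (\ref{4.40}) the only non-$L^1$ piece is $\int_0^t\int\partial_{y_n}M_{ij}(x,y,t-s)u_nu_j\,dy\,ds$. You cannot subtract its value at $y'=0'$ using a full $y'$-moment: $\|\partial_{y_n}\nabla_{y'}M_{ij}\|_{L^1_x}$ is only $O((t-s)^{-1})$, which is non-integrable at $s=t$, and $|y'||\boldsymbol u|^2$ need not be integrable. One interpolates to get $C|y'|^\alpha(t-s)^{-(1+\alpha)/2}$ and pairs it with the weighted $L^2$ bound.

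The factor $G^{(1)}_{t-s}(y_n)$ does not come from integrating by parts in $y_j$. It comes from Lemma~\ref{lem:2.4}, which trades $\partial_{y_n}M_{ij}(x,0',y_n,t-s)$ for $\partial_{x_n}M_{ij}$ (of size $O((t-s)^{-1/2})$ in $L^1_x$) plus $L_{ij}(x,t-s)G^{(1)}_{t-s}(y_n)$. One then freezes $L_{ij}(x,t-s)\to L_{ij}(x,t)$ using $\|\partial_tL_{ij}\|_{L^1}\lesssim t^{-1}$. Cases (2)--(3) rest on ``the same kernel bounds'' and so inherit this gap. There the moments follow from $|\boldsymbol u|\lesssim(1+|x|)^{-n}$, but some fractional interpolation of this kind is still needed.

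Two further points. For $n=2$, using only $\|\boldsymbol u\|_{L^2}^2\lesssim K(\boldsymbol a)(1+s)^{-1}$ and splitting at $s=t/2$ gives $t^{-1/2}\log t$ for the nonlinear part of $\mathscr A_j$. This eventually beats the linear part $c\,t^{-1/2}|\int y_ja_n\,dy|$ however small $K(\boldsymbol a)$ is, so the comparison in Case (1) fails. You need the $(1+s)^{-2}$ decay, which is why $K(\boldsymbol a)$ contains $\int y_n|\boldsymbol a|$ when $n=2$.

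In Case (3), the claim $|u_n|\lesssim x_n$ uniformly as $t\to0$ is not justified for data satisfying only $|\boldsymbol a|\lesssim x_n^{b_1}$. The available tool is the boundary-vanishing well-posedness of Lemma~\ref{lem:2.6}, which gives $|\boldsymbol u|\lesssim x_n^{b_1}(1+|x|)^{-n}$. Hence $|u_nu_j|\lesssim x_n^{2b_1}$ and the nonlinear contribution is $O(t^{b_1+1})$. This is exactly where the condition $b_2<2b_1+2$ comes from; your $x_n^{1+b_1}$ would give $t^{(3+b_1)/2}$ instead.
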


\begin{Remark}
The Cases (1) and (3) in Theorem \ref{thm:1.2} provide sufficient conditions for the initial data so that $\boldsymbol{u}(x,t)$ does not belong to $L^1(\mathbb{R}^n_+)$ which correspond to the long time and short time behaviors of solutions respectively. Specifically, Case (1) indicates that the smallness of the ratio of some second-order quantities to a first-order quantity of the initial data guarantees the non-$L^1$ property over long times. Case (3) shows that if the normal component of the initial data does not vanish too rapidly near the boundary, then the solution no longer belongs to $L^1(\mathbb{R}^n_+)$ over a short interval of time starting from $t=0$, even though the initial data itself may be well localized.\end{Remark} \begin{Remark}
   The estimate (\ref{1.10}) improves upon the corresponding result in \cite[Theorem 1.4]{2018H}, where the decay rate was $t^{-\alpha/2}$ for $\alpha \in (0,1)$, to the sharper decay rate $t^{-1/2}$.
\end{Remark}\begin{Remark}In Case (2), the selection of different coordinate systems (i.e. the origin may be translated arbitrarily along the
boundary) reveals that the decay of the Navier--Stokes system in the half space maintains an algebraic decay rate of order $n$ orthogonal to the boundary, as opposed to the algebraic decay rate of order $n + 1$ observed in the whole space (see \cite{2007BV}). The coefficient $C(t)$ vanishes as $t$ tends to zero since initial data satisfies $|\boldsymbol{a}(x)|\leq\frac{C}{(1+|x|)^{\theta}},\theta>n$.\end{Remark} \begin{Remark}The existence of  initial data in Case (3) of Theorem \ref{thm:1.2} follows from Lemma \ref{A.2} in the Appendix. 
\end{Remark}

By  Cases (1) and (2) in Theorem \ref{thm:1.2}, we show that the leading terms in both $L^1$ and the spatial expansion in $\Lambda_\sigma$ are given by
$\sum_{j=1}^{n-1}L_{ij}(x,t)\mathscr{A}_j[\boldsymbol{a},\boldsymbol{u}](t)$.
More precisely, see  (\ref{pointwise}),  for $i=1,2,...,n,j=1,2,\dots,n-1,t>0,\frac{|x|}{\sqrt{t}}>1$ and $x\in\Lambda_{\sigma}$ with $\sigma>1$ large enough\begin{align}
C|\mathscr{A}_j[\boldsymbol{a},\boldsymbol{u}](t)|\frac{|x_ix_j|}{|x|^{n+2}} &\leq |L_{ij}(x,t)\mathscr{A}_j[\boldsymbol{a},\boldsymbol{u}](t)| \leq \widetilde{C}|\mathscr{A}_j[\boldsymbol{a},\boldsymbol{u}](t)|\frac{|x_ix_j|}{|x|^{n+2}}, && \text{for } i \neq j,  \\
C|\mathscr{A}_j[\boldsymbol{a},\boldsymbol{u}](t)|\frac{1}{|x|^n} &\leq |L_{ij}(x,t)\mathscr{A}_j[\boldsymbol{a},\boldsymbol{u}](t)| \leq \widetilde{C}|\mathscr{A}_j[\boldsymbol{a},\boldsymbol{u}](t)|\frac{1}{|x|^n}, && \text{for } i = j. 
\end{align}
In fact, we can also show the rate of convergence of $\mathscr{A}_j(t)$ as $t$ tends to zero. 
\begin{Theorem}[Initial vanishing of leading term]\label{thm:1.3}
If  the initial data $\boldsymbol{a}\in L^1(\mathbb{R}^n_+)\cap L^n(\mathbb{R}^n_+)$ satisfies (\ref{origin}) and
\begin{equation}\label{p1}
|a_n(x)| \leq \frac{C x_n^{b}}{(1+x_n)^{b} (1+|x|)^{\bar{b}}}
\quad \text{with } \bar{b}>n\text{ and }b\geq 0,\end{equation} then there exists a $T'\in (0,1)$ and a unique strong solution $\boldsymbol{u}$ of (\ref{1.1}) such that  
\[
|\mathscr{A}_j[\boldsymbol{a},\boldsymbol{u}](t)| \leq C t^{\min\{\frac{1}{2},\frac{b}{2}\}} \quad \text{for } t \in (0,T') \text{ and } j = 1,2,\dots,n-1.\]
 If the initial data $\boldsymbol{a}\in L^1(\mathbb{R}^n_+)\cap L^n(\mathbb{R}^n_+)$ satisfies (\ref{origin}) and \begin{equation}\label{p2}
\begin{cases}
|\boldsymbol{a}(x)| \leq \dfrac{C x_n^{b_1}}{(1+x_n)^{b_1}\,(1+|x|)^{\bar{b}}},\\[6pt]
|a_n(x)| \leq \dfrac{C x_n^{b_2}}{(1+x_n)^{b_2}\,(1+|x|)^{\bar{b}}},
\end{cases}
\quad\text{with } \bar{b}>n,\ b_1\in[0,1]\text{ and } b_2\in[0,4],
\end{equation} then there exists a $T''\in (0,1)$ and a unique strong solution $\boldsymbol{u}$ of (\ref{1.1}) such that  
\[
|\mathscr{A}_j[\boldsymbol{a},\boldsymbol{u}](t)| \leq C t^{\min\{b_1+1,\frac{b_2}{2}\}}
\quad \text{for } t \in (0,T'') \text{ and } j = 1,2,\dots,n-1.\]
    
\end{Theorem}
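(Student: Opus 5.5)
The statement reduces to bounding the two pieces of $\mathscr{A}_j[\boldsymbol{a},\boldsymbol{u}](t)$ in (\ref{1.8}) separately:
\[
I_j(t)=\int_{\mathbb{R}^n_+}G_t^{(1)}(y_n)y_ja_n(y)\,dy,\qquad N_j(t)=\int_0^t\int_{\mathbb{R}^n_+}G_{t-s}^{(1)}(y_n)u_nu_j(y,s)\,dy\,ds .
\]
The solution itself comes from the local well-posedness in Theorem \ref{thm:1.2}, in its weighted form. Under (\ref{p1}) or (\ref{p2}) I would run the fixed point in a space that records both the spatial decay $(1+|x|)^{-\bar b'}$ with $n<\bar b'\le \bar b$, and the boundary vanishing $x_n^{b_1}$ for $|u|$ and $\min\{x_n^{b_2},\cdot\}$-type behavior for $u_n$, up to terms growing in $t$ (as in \cite{2017CJ,2023KLLT}). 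This yields $T',T''<1$.

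\textbf{Linear term.} I would use the divergence-free structure. Since $\nabla\cdot \boldsymbol{a}=0$ and $a_n|_{x_n=0}=0$, integrating $\partial_n a_n=-\nabla'\cdot \boldsymbol{a}'$ in $y'$ gives $\partial_{y_n}\int_{\mathbb{R}^{n-1}} a_n\,dy'=0$, hence $\int a_n\,dy'=0$. Similarly,
\[
\partial_{y_n}\int_{\mathbb{R}^{n-1}} y_ja_n\,dy'=\int_{\mathbb{R}^{n-1}} a_j\,dy',
\]
using $|y'|^2a_n\in L^1$ to justify the integrations by parts. Therefore $f_j(y_n):=\int_{\mathbb{R}^{n-1}} y_ja_n\,dy'$ satisfies $f_j(0)=0$ and $f_j'(y_n)=\int_{\mathbb{R}^{n-1}} a_j\,dy'$.

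Then $I_j(t)=\int_0^\infty G_t^{(1)}(y_n)f_j(y_n)\,dy_n$. Under (\ref{p1}) I would estimate directly: $|f_j(y_n)|\le C\min\{y_n^b,1\}(1+y_n)^{-(\bar b-n)}$, since $\int |y'|(1+|y|)^{-\bar b}dy'\lesssim (1+y_n)^{n-\bar b}$, which is integrable once one uses $|y'|\,|a_n|$ together with $|y'|^2a_n\in L^1$. This gives $|I_j|\le C\int G_t^{(1)}\min\{y_n^b,1\}\le Ct^{b/2}$, but only for $b\le$ the order available. The cap at $\frac12$ comes instead from the alternative bound $f_j(y_n)=\int_0^{y_n}\int_{\mathbb{R}^{n-1}} a_j\,dy'\,ds$, so $|f_j|\le Cy_n$, giving $t^{1/2}$. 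Taking the better of the two gives $t^{\min\{1/2,b/2\}}$.

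Under (\ref{p2}), the refinement uses the bound $|f_j'(s)|\le Cs^{b_1}$ from the $a_j$ bound. This gives $|f_j(y_n)|\le Cy_n^{b_1+1}$, and combined with the direct bound $|f_j|\le C y_n^{b_2}$ it yields
\[
|I_j(t)|\le C\int G_t^{(1)}(y_n)\min\{y_n^{b_1+1},y_n^{b_2}\}\,dy_n,
\]
which already gives $t^{\max\{(b_1+1),b_2\}/2}$.

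\textbf{Nonlinear term.} This is the main obstacle, because the claimed rate $t^{b_1+1}$ exceeds what the naive bound $|N_j|\le \int_0^t\|u_nu_j(s)\|_{L^1}ds\lesssim t$ gives. I would exploit the boundary vanishing of $\boldsymbol{u}$ propagated from the data. For $s$ small, $|u(y,s)|\lesssim (\min\{y_n,1\}+\sqrt s)^{b_1}(1+|y|)^{-\bar b'}$, plus a boundary-layer correction from the Dirichlet condition, which itself vanishes linearly, $|u|\lesssim y_n/\sqrt s$ near the wall. Hence $\int G_{t-s}^{(1)}(y_n)|u|^2dy\lesssim (t-s+s)^{b_1}$, and integrating over $s$ gives the extra factor $t$, i.e. $t^{b_1+1}$. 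For (\ref{p1}) the crude bound $Ct\le Ct^{1/2}$ suffices.

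The delicate point is showing that the propagated weighted bound, including the $\sqrt s$ regularization near the boundary, holds for the nonlinear solution. I would establish it within the fixed-point space using Green-tensor estimates of \cite{2023KLLT}, treating $b_2\le4$ and $b_1\le1$ as the ranges where these kernel bounds are available. Finally I would take the minimum of the linear and nonlinear rates.
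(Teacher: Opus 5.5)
\paragraph{Verdict: the decomposition matches the paper, but the key step for (p2) has a gap.}
Your decomposition is the same as the paper's. You bound the linear piece $\int G_t^{(1)}(y_n)y_ja_n\,dy$ directly from the pointwise hypothesis on $a_n$. You bound the nonlinear piece by $\sup_s\|\boldsymbol{u}(s)\|_{L^2}$ under (\ref{p1}), and by a propagated boundary-vanishing bound under (\ref{p2}). However, the step you call delicate is left unproved, and the route you propose for it fails as stated.

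\paragraph{The missing ingredient is Lemma \ref{lem:2.6}.}
A fixed-point space with decay $(1+|x|)^{-\bar b'}$, $\bar b'>n$, is contained in $L^1(\mathbb{R}^n_+)$. By part (2) of Theorem \ref{thm:1.1}, the Stokes flow $e^{-t\mathbb{A}}\boldsymbol{a}$ leaves $L^1$ whenever $\int G_t^{(1)}(y_n)y_ja_n\,dy\neq0$. So the linear part of your map does not preserve that space. Only decay of order at most $n$ propagates, which is why Lemma \ref{lem:2.6} requires $b<\bar b\le n$. Since $\bar b>n$ and $b_1\in[0,1]$, we have $\boldsymbol{a}\in Z_{n,b_1}$, and the lemma gives
\[
|\boldsymbol{u}(y,s)|\le \frac{Cy_n^{b_1}}{(1+y_n)^{b_1}(1+|y|)^{n}}\qquad\text{for } s\in(0,T_3),
\]
uniformly in $s$, with no $\sqrt s$ regularization and no boundary-layer term. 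Then $\int_{\mathbb{R}^{n-1}}|u_nu_j|\,dy'\le Cy_n^{2b_1}$, and
\[
\int_0^t\int_0^\infty G^{(1)}_{t-s}(y_n)y_n^{2b_1}\,dy_n\,ds=C\int_0^t(t-s)^{b_1}\,ds=Ct^{b_1+1}.
\]
Your weaker bound $(\min\{y_n,1\}+\sqrt s)^{b_1}$ would also give $t^{b_1+1}$, but it too would have to be proved with spatial decay of order $n$, not $\bar b'>n$. Under (\ref{p1}) no pointwise-weighted fixed point is available at all, since only $a_n$ is pointwise controlled. There you should simply use the strong solution of Lemma \ref{lem:2.1}.

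\paragraph{Smaller errors.}
Your ``naive bound'' $|N_j|\lesssim\int_0^t\|u_nu_j(s)\|_{L^1}\,ds\lesssim t$ drops the kernel. Since $\sup_{y_n}G^{(1)}_{t-s}(y_n)=(4\pi(t-s))^{-1/2}$, with only $L^2$ control one gets $|N_j|\le C\int_0^t(t-s)^{-1/2}\,ds=Ct^{1/2}$. This still suffices under (\ref{p1}). It is also the actual source of the cap $\tfrac12$ in $\min\{\tfrac12,\tfrac b2\}$; the cap does not come from the linear term.

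On the linear side, $\int_0^\infty G^{(1)}_t(y_n)y_n^b\,dy_n=Ct^{b/2}$ holds for every $b\ge0$, using $\int_{\mathbb{R}^{n-1}}|y_j|(1+|y|)^{-\bar b}\,dy'\le C$ for $\bar b>n$. So no case distinction is needed. Your alternative bound $|f_j|\le Cy_n$ is unjustified under (\ref{p1}), because $\sup_{y_n}\int_{\mathbb{R}^{n-1}}|a_j|\,dy'$ is not controlled there. Your identity $f_j'=\int_{\mathbb{R}^{n-1}}a_j\,dy'$ under (\ref{p2}) is a legitimate refinement, improving the linear rate to $t^{\max\{b_1+1,b_2\}/2}$, but it is not needed for the statement.

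Finally, $b_2\le4$ is not a restriction coming from kernel bounds. Since $b_1+1\le2$, a larger $b_2$ cannot improve $\min\{b_1+1,b_2/2\}$.
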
\begin{Remark}
    For the solutions in the whole space with well localized data, and for small $t$ and large enough $|x|$, the initial vanishing of leading term can be described by the pointwise estimate $C\frac{t}{|x|^{n+1}}\leq|\boldsymbol{u}(x,t)|\leq \widetilde{C}\frac{t}{|x|^{n+1}}$  (\!\cite{2007BV}). However, the vanishing rate of leading term in the half space is related to the boundary vanishing rate of initial data.
\end{Remark}
In the following we give the key ideas for the proof of main results. In \cite{2018H}, Han observed that the commutation of normal derivatives in the Green tensor helps extract the non‑$L^1$ part (see Lemma \ref{lem:2.4}) and gave an expansion of the nonlinear part in integral form (\ref{duhamel}). To apply this property to the linear part, we make use of a natural cancellation property of the normal component of the initial data (see Lemma \ref{A.1}) and finally extract the leading terms in Theorem \ref{thm:1.1} and \ref{thm:1.2}. Then the sufficient conditions on the initial data for $\boldsymbol{u}(\cdot,t)\notin L^1(\mathbb{R}^n_+)$ in Theorem \ref{thm:1.2} are obtained by comparing the linear and nonlinear parts. The improved decay estimate (\ref{1.10}) for the remainder terms in Cases (1) and (2) of Theorem \ref{thm:1.2} follows from the use of the Green tensor representation (\ref{4.40}) in the nonlinear part, which avoids the decomposition of the Leray projection operator required in \cite{2018H}, and yields the decay rate $-\frac{1}{2}$. In the proof of the  lower bound (\ref{lower}), we also utilize the Green tensor representation (\ref{4.40}).

    The rest of the paper is organized as follows: In Section \ref{sec:2}, we introduce some notations and preliminary results. In Section \ref{sec:3}, the $L^1$ asymptotic expansion of the Stokes system is established as that given in Theorem \ref{thm:1.1}.  In Section \ref{sec:4}, we  establish the \( L^1\) profile of the Navier--Stokes system and give the first sufficient condition for \( \boldsymbol{u}(\cdot,t) \notin L^1(\mathbb{R}^n_+) \) for long time behavior, which corresponds to Case (1) in Theorem \ref{thm:1.2}. In Section \ref{sec:5}, we give the pointwise lower bound estimate in Case (2) of Theorem \ref{thm:1.2} . In Section \ref{sec:6}, we  prove Theorem \ref{thm:1.3} and provide the second sufficient condition for \( \boldsymbol{u}(\cdot,t) \notin L^1(\mathbb{R}^n_+) \), which corresponds to short time behavior in Case (3) of Theorem \ref{thm:1.2}.

    Throughout this paper, the constant $C$ may depend on the initial data $\boldsymbol{a}$ and the dimension $n$, but such dependence will not be explicitly indicated.

\section{Notations and Preliminaries }\label{sec:2}
In this section, we introduce some notations and some preliminary results including the Green tensor formula by Solonnikov, existence and pointwise estimates of solutions to (\ref{1.1}).

Let $C_0^\infty(\mathbb{R}^n_+)$ denote the space of smooth real-valued functions with compact support in $\mathbb{R}^n_+$. 
We define the divergence-free test function space
\[
C_{0,\sigma}^\infty(\mathbb{R}^n_+) := \big\{ \boldsymbol{\phi}=(\phi_1,\phi_2,\dots,\phi_n) \in C_0^\infty(\mathbb{R}^n_+) \;\big|\; \nabla \cdot \boldsymbol{\phi} = 0 \big\},
\]
and for $1 < r < \infty$, its closure in $L^r(\mathbb{R}^n_+)$ is denoted by $L_\sigma^r(\mathbb{R}^n_+)$ which coincides with the space of divergence-free vector fields $\boldsymbol{u} \in L^r(\mathbb{R}^n_+)$ satisfying $\boldsymbol{u}\cdot\boldsymbol{n}|_{\partial\mathbb{R}^n_+} =u_n|_{\partial\mathbb{R}^n_+} = 0$ in the weak sense. Consider Helmholtz decomposition (see \cite{1988BM}). 
 $$L^r(\mathbb{R}^n_+)=L^r_\sigma(\mathbb{R}^n_+)\oplus L^r_\pi(\mathbb{R}^n_+),\quad1<r<\infty,$$
 with
$$L^r_\pi(\mathbb{R}^n_+)=\{\nabla p\in L^r(\mathbb{R}^n_+):p\in L^r_{loc}(\overline{\mathbb{R}^n_+})\}.$$
Let $\mathbb{P}$ be the associated  projection operator from $L^r(\mathbb{R}^n_+)$ onto $L^r_\sigma(\mathbb{R}^n_+)$.
The problem (\ref{1.1}) can be written in the form
\begin{equation}
    \partial_t \boldsymbol{u} + \mathbb{A}\boldsymbol{u} = -\mathbb{P}(\boldsymbol{u} \cdot \nabla)\boldsymbol{u} \quad \text{in } \mathbb{R}^n_+ \times (0,\infty), \quad \boldsymbol{u}(x,0) = \boldsymbol{a}(x),
\end{equation}
which is also transformed into the integral equation\begin{equation}\label{duhamel}\boldsymbol{u}(x,t)=e^{-t\mathbb{A}}\boldsymbol{a}-\int_{0}^{t}e^{-(t-s)\mathbb{A}}\mathbb{P}\left(\boldsymbol{u}\cdot\nabla \boldsymbol{u}(y,s)\right)ds\end{equation} where  $\mathbb{A}=-\mathbb{P}\Delta,$ $e^{-t\mathbb{A}}$ is the Stokes semigroup generated by $\mathbb{A}$ and $\boldsymbol{v}=e^{-t\mathbb{A}}\boldsymbol{a}$ is the solution of the Stokes system (\ref{1.2}).

 For $\boldsymbol{a} (x)= \left(a_1(x), a_2(x), \dots, a_n(x)\right): \mathbb{R}_+^n \to \mathbb{R}^n$, $\nabla\cdot \boldsymbol{a}=0,  a_n|_{\partial\mathbb{R}^n_+}=0,x\in\mathbb{R}^n_+$ and $t>0$, the  solution representation formula  of  (\ref{1.2}) by Solonnikov (see \cite{2003S_1}) is given by\begin{equation}\label{3.26}\begin{aligned}\boldsymbol{v}(x,t)&=\int_{\mathbb{R}^{n}_{+}}\mathcal{G}(x,y,t)\boldsymbol{a}(y)\,dy\\&=\int_{\mathbb{R}^{n}_{+}}[G_{t}(x-y)- G_{t}(x-y^{*})]
\boldsymbol{a}(y)\,dy+\int_{\mathbb{R}^{n}_{+}}M(x,y,t)\boldsymbol{a}(y) \,dy.\end{aligned}\end{equation} 
Here  $M(x,y,t)=(M_{ij})_{i,j=1,2,...,n}$ is a matrix defined as
\begin{equation}\label{2.13}M_{ij}(x,y,t)=-4(1-\delta_{nj})\frac{\partial}{\partial x_{j}}\int_{0}^{x_n}\int_{\mathbb{R}^{n-1}}\frac{\partial E(x-z)}{\partial x_{i}}G_{t}(z-y^{*})\,dz,\end{equation} where $y^*=(y_1,y_2,\dots,y_{n-1},-y_n)$.

Let $  k = (k' , k_n),l=(l',l_n)$. Then the following estimate for $M_{i j}(x,y,t)$
 is given by Solonnikov \cite{2003S}. For
$x = (x' , x_n), y = (y', y_n)\in \mathbb{R}^{n}
_{+}, t > 0$ and $i,j=1,2,\dots,n,$ one has
\begin{equation}\label{2.9}|\partial_{x}^{k}\partial_{y}^{l}M_{ij}(x,y,t)|\leq Ct^{-\frac{l_n}{2}}(x_{n}+\sqrt{t})^{-k_{n}}
(|x'-y'|+x_{n}+y_{n}+\sqrt{t})^{-n-|k'|-|l'|}e^{-\frac{cy_{n}^{2}}{t}}.\end{equation}

Then we introduce two lemmata about the structure of Green tensor.
\begin{Lemma}[\hspace{-0.1pt}\cite{2003S} or {\cite[Lemma 4.1]{2024H}}]\label{lem:2.2}
Let $\mathscr{S}(\mathbb{R}^n)$ denote the class of Schwartz functions.
For any $x = (x_1, x_2, \dots, x_n) \in \mathbb{R}^n_+\ (n\geq 2)$  and $g \in \mathscr{S}(\mathbb{R}^n)$, it holds that
\[
\sum_{i=1}^n \partial_{x_i} \int^{x_n}_0 \int_{\mathbb{R}^{n-1}} g(y) \partial_{x_i} E(x - y) \,\,dy = -\frac{1}{2} g(x).
\]
\end{Lemma}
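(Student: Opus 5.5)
The identity is a Green's representation of a Newtonian potential over the slab $\{0<y_n<x_n\}$ whose upper boundary moves with $x$. The plan is to differentiate $F(x):=\int_0^{x_n}\int_{\mathbb{R}^{n-1}} g(y)\,\partial_{x_i}E(x-y)\,dy$ term by term and track the contribution from the variable upper limit.

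\textbf{Tangential indices.} For $i=1,\dots,n-1$ the domain of integration does not depend on $x_i$. Since $\partial_{x_i}E(x-y)=-\partial_{y_i}E(x-y)$, I integrate by parts in $y_i$ to get
\[
\int_0^{x_n}\!\!\int_{\mathbb{R}^{n-1}} \partial_{y_i}g(y)\,E(x-y)\,dy .
\]
This is a convolution with a locally integrable kernel, with no boundary terms because $g$ is Schwartz. Differentiating once more in $x_i$ gives
\[
\partial_{x_i}F_i=\int_0^{x_n}\!\!\int \partial_{y_i}g\,\partial_{x_i}E(x-y)\,dy .
\]
The singularity $|x-y|^{1-n}$ is integrable, so this step is justified.

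\textbf{Normal index.} For $i=n$ the upper limit depends on $x_n$, so I write $F_n=\int_0^{x_n}h(x,y_n)\,dy_n$ with $h(x,y_n)=\int_{\mathbb{R}^{n-1}}g(y',y_n)\,\partial_{x_n}E(x-y)\,dy'$. The problem is the endpoint value $h(x,x_n)$: the kernel $\partial_{x_n}E(x'-y',0)$ vanishes in principal-value sense but is not absolutely integrable. To avoid this, I first replace $\partial_{x_n}E(x-y)$ by $-\partial_{y_n}E(x-y)$ and integrate by parts in $y_n$. This gives
\[
F_n=-\int_{\mathbb{R}^{n-1}} g(y',x_n)E(x'-y',0)\,dy' + \int_{\mathbb{R}^{n-1}} g(y',0)E(x'-y',x_n)\,dy' + \int_0^{x_n}\!\!\int \partial_{y_n}g\,E(x-y)\,dy .
\]
Differentiating in $x_n$, the boundary term from the last integral, $\int g_{y_n}(y',x_n)E(x'-y',0)\,dy'$, cancels exactly against the derivative of the first term. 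I am left with
\[
\partial_{x_n}F_n=\int_{\mathbb{R}^{n-1}} g(y',0)\,\partial_{x_n}E(x'-y',x_n)\,dy'+\int_0^{x_n}\!\!\int \partial_{y_n}g\,\partial_{x_n}E(x-y)\,dy .
\]

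\textbf{Summing and applying Green's identity.} Summing over $i$ gives
\[
\sum_i\partial_{x_i}F_i=\int_{S}\nabla g(y)\cdot\nabla_x E(x-y)\,dy+\int g(y',0)\,\partial_{x_n}E(x'-y',x_n)\,dy',
\qquad S=\mathbb{R}^{n-1}\times(0,x_n).
\]
Using $\nabla_xE=-\nabla_yE$, the first term is $-\int_S\nabla g\cdot\nabla_y E$. I apply Green's formula on $S\setminus B_\varepsilon(x)$ and let $\varepsilon\to0$. The point $x$ lies on the top face $y_n=x_n$, so the small sphere around it cut by $S$ is a half sphere, and it contributes $\tfrac12 g(x)$ in place of the full-space $g(x)$; this is where the factor $-\tfrac12$ comes from. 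The bottom face $y_n=0$ produces a term cancelling the $g(y',0)$ integral, up to sign conventions. The top face produces $\int g(y',x_n)\,\partial_{y_n}E$ evaluated at $y_n=x_n$, and this vanishes for $y'\neq x'$ because $\partial_{y_n}E(x-y)\propto (x_n-y_n)|x-y|^{-n}=0$ there. The interior term involves $\Delta_yE=0$ away from $x$.

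\textbf{Main obstacle.} The delicate step is the local analysis near $x$ on the boundary of $S$. I must check that the half-ball limit gives exactly $-\tfrac12 g(x)$ with the correct sign, given $-\Delta E=\delta$. I also need a uniform bound on the decay of $g$ at infinity to discard the lateral contributions. For $n=2$ the logarithmic kernel needs the same arguments, but the boundary terms at infinity vanish only because $g$ is Schwartz; this should be stated explicitly.
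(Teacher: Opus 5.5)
The paper gives no proof of this lemma; it is quoted from Solonnikov and from Han, so there is no in-text argument to match. On its own merits your argument is correct. Moving the derivative onto $g$ avoids differentiating the non-integrable kernel $\nabla^2E$: in $y_i$ for $i<n$, and in $y_n$ for $i=n$. The cancellation you found in $\partial_{x_n}F_n$ is exact.

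The two points you left open both close with the right signs.
\begin{itemize}
\item On the bottom face the outward normal is $-e_n$, so $\partial_\nu[E(x-y)]=\partial_{x_n}E(x-y)$.
\item On the lower hemisphere $y=x+\varepsilon\theta$, $\theta_n<0$, the outward normal of $S\setminus B_\varepsilon(x)$ is $-\theta$. Hence $\partial_\nu[E(x-y)]=(n\omega_n)^{-1}\varepsilon^{1-n}$, and the half-area $\tfrac12 n\omega_n\varepsilon^{n-1}$ gives the limit $\tfrac12 g(x)$.
\end{itemize}
Together these give
\[
\int_S\nabla g\cdot\nabla_y[E(x-y)]\,dy=\tfrac12 g(x)+\int_{\mathbb{R}^{n-1}}g(y',0)\,\partial_{x_n}E(x'-y',x_n)\,dy',
\]
so the minus sign in front of this integral removes the leftover bottom term and leaves exactly $-\tfrac12 g(x)$. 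The same holds for $n=2$ with $n\omega_n=2\pi$.

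The one step you should justify explicitly is the Leibniz rule for the moving upper limit in $\int_0^{x_n}\!\int\partial_{y_n}g\,E(x-y)\,dy$. You need continuity of $s\mapsto\int\partial_{y_n}g(y',s)E(x'-y',x_n-s)\,dy'$ at $s=x_n$. This follows by dominated convergence from $|E(x'-y',x_n-s)|\le|E(x'-y',0)|+C\log(2+|x'-y'|)$ and the decay of $g$.

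For comparison, a shorter route is a partial Fourier transform in $x'$. The transform of $\partial_{x_i}E$ is $i\xi_ie^{-|\xi'||x_n|}/(2|\xi'|)$ for $i<n$ and $-\tfrac12\operatorname{sgn}(x_n)e^{-|\xi'||x_n|}$ for $i=n$. With these, the tangential terms sum to $-\tfrac{|\xi'|}{2}\int_0^{x_n}e^{-|\xi'|(x_n-y_n)}\hat g(\xi',y_n)\,dy_n$. The normal term gives $-\tfrac12\hat g(\xi',x_n)$ plus the same integral with the opposite sign.

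That computation is two lines and exhibits the $\tfrac12$ as the jump of $\partial_{x_n}E$ across $x_n=0$. It does rely on the explicit tangential transforms of $\nabla E$ and on differentiating under the $y_n$-integral on the Fourier side. Your real-variable argument needs no Fourier analysis and treats all $n\ge 2$ uniformly. It shows the $\tfrac12$ geometrically, as the half-sphere around a point sitting on the top face of the slab.
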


The following lemma is a direct consequence of   \cite[Lemma 4.1]{2018H}. For completeness, we provide a proof here.

\begin{Lemma}\label{lem:2.4}For $x\in \mathbb{R}^{n}_{+}\ (n\geq 2), y_{n}>0,\ t>0$ and $i=1,2,\dots,n$, it holds that
\begin{equation}\label{2.10}
\partial_{x_{n}}M^*_i(x,0',y_n,t)=\partial_{y_n}M^*_i(x,0',y_n,t)-4G_t^{(1)}(y_n)\int_0^\infty \partial_{x_i}[G_{t+\tau}^{(n-1)}(x')G_\tau^{(1)}(x_n)]\,d\tau.\end{equation}
Here $M_i^*(x,0',y_n,t)$ is defined in (\ref{1.6}) and $0'=(0,0,\dots,0)\in \mathbb{R}^{n-1}$. \end{Lemma}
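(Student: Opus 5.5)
The plan is a direct computation: move the $x_n$-dependence out of the integration domain by a change of variables, then differentiate under the integral. Writing $y=(0',y_n)$, we have $z-y^{*}=(z',z_n+y_n)$. Substituting $w=x-z$ in (\ref{1.6}) gives
\begin{equation*}
M^*_i(x,0',y_n,t)=-4\int_0^{x_n}\int_{\mathbb{R}^{n-1}}\partial_{w_i}E(w)\,G_t(x'-w',\,x_n-w_n+y_n)\,dw'\,dw_n .
\end{equation*}
The kernel now depends on $(x_n,y_n)$ only through the combination $x_n+y_n$. The variable $x_n$, however, also appears as the upper limit of the $w_n$-integral, and this produces the extra term in (\ref{2.10}).

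First I differentiate in $x_n$ using the Leibniz rule. The interior term is
\begin{equation*}
-4\int_0^{x_n}\int_{\mathbb{R}^{n-1}}\partial_{w_i}E(w)\,(\partial_n G_t)(x'-w',x_n-w_n+y_n)\,dw ,
\end{equation*}
and this is exactly $\partial_{y_n}M^*_i(x,0',y_n,t)$, because $y_n$ enters only through $x_n+y_n$ and not through the domain. The boundary term, coming from $w_n=x_n$, is
\begin{equation*}
-4\int_{\mathbb{R}^{n-1}}\partial_{i}E(w',x_n)\,G_t(x'-w',y_n)\,dw' .
\end{equation*}
Since $G_t(x'-w',y_n)=G_t^{(1)}(y_n)\,G_t^{(n-1)}(x'-w')$, this equals
\begin{equation*}
-4G_t^{(1)}(y_n)\bigl[(\partial_iE)(\cdot,x_n)*'G_t^{(n-1)}\bigr](x'),
\end{equation*}
where $*'$ denotes convolution in $x'$.

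Next I identify this tangential convolution using the heat-kernel representation
\begin{equation*}
\partial_iE(x)=\int_0^\infty\partial_{x_i}G_\tau(x)\,d\tau ,\qquad x_n>0,
\end{equation*}
together with the splitting $G_\tau(x)=G_\tau^{(n-1)}(x')\,G_\tau^{(1)}(x_n)$. This representation holds for every $n\ge2$ at the level of first derivatives. For $n=2$, $E$ itself is not $\int_0^\infty G_\tau\,d\tau$, but $\int_0^\infty\partial_iG_\tau\,d\tau$ converges absolutely for $x\neq0$ and equals $\partial_iE$. The $x'$-convolution passes through the $\tau$-integral and commutes with $\partial_{x_i}$. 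For $i<n$ the derivative is tangential; for $i=n$ it acts only on the $G_\tau^{(1)}(x_n)$ factor. The semigroup identity $G_\tau^{(n-1)}*'G_t^{(n-1)}=G_{t+\tau}^{(n-1)}$ then turns the boundary term into
\begin{equation*}
-4G_t^{(1)}(y_n)\int_0^\infty\partial_{x_i}\bigl[G_{t+\tau}^{(n-1)}(x')\,G_\tau^{(1)}(x_n)\bigr]\,d\tau .
\end{equation*}
Combining this with the interior term yields (\ref{2.10}).

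The main obstacle is justifying the interchanges: differentiation under the integral, Leibniz's rule at the moving endpoint, and Fubini between $\tau$, $w'$ and the $x'$-convolution.
\begin{itemize}
\item The Leibniz step is safe because $x_n>0$ is fixed. Then $|\partial_iE(w',x_n)|\le C(|w'|+x_n)^{1-n}$ is bounded, and the Gaussian factor in $w'$ makes the boundary integral absolutely convergent.
\item The interior integrand has only the integrable singularity $|w|^{1-n}$ near $w=0$, so differentiating under the integral is justified by dominated convergence, with the Gaussian factor controlling large $|w|$.
\item For Fubini in $\tau$, I will bound $|\partial_iG_\tau(w',x_n)|\le C\tau^{-(n+1)/2}e^{-(|w'|^2+x_n^2)/(8\tau)}$. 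With $x_n>0$ this is integrable near $\tau=0$, and it is integrable as $\tau\to\infty$ since $(n+1)/2>1$. Multiplied by $G_t^{(n-1)}(x'-w')$, it remains absolutely integrable in $(\tau,w')$, which justifies all the exchanges.
\end{itemize}
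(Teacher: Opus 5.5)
Your proof is correct, but you organize the computation differently from the paper.

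The paper first inserts $\partial_iE=\int_0^\infty\partial_iG_\tau\,d\tau$ into the whole integral. It then uses the tangential semigroup identity to reduce $M_i^*(x,0',y_n,t)$ to a $\tau$-integral of $\partial_{x_i}G^{(n-1)}_{t+\tau}(x')$ against the one-dimensional integral $\int_0^{x_n}G^{(1)}_\tau(x_n-z_n)G^{(1)}_t(z_n+y_n)\,dz_n$; see (\ref{2.11}). Next it differentiates in $x_n$ and integrates by parts in $z_n$ to move the derivative onto $y_n$. The endpoint term $G^{(1)}_\tau(0)G^{(1)}_t(x_n+y_n)$ from $z_n=x_n$ appears with both signs and cancels, leaving $G^{(1)}_\tau(x_n)G^{(1)}_t(y_n)$ from $z_n=0$.

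Your substitution $w=x-z$ does that integration by parts once and for all. Because the integrand depends on $(x_n,y_n)$ only through $x_n+y_n$, the difference $\partial_{x_n}-\partial_{y_n}$ is exactly the Leibniz term at $w_n=x_n$ (that is, $z_n=0$), and no cancelling terms arise. As a consequence, you need the heat-kernel representation only on the slice $w_n=x_n>0$, where $\partial_iE$ is smooth and bounded. This makes your Fubini justification immediate, whereas the paper's exchange has to be checked near the singular point $z=x$. You also treat $i=n$ and $n=2$ explicitly, which the paper covers with ``similarly''.

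What the paper's route buys is the explicit one-dimensional representation (\ref{2.11}) of $M_i^*$ itself. The paper reuses it in Step 4 of the proof of Theorem \ref{thm:1.1} to estimate differences such as $\partial_{x_n}\nabla_{x'}'M_i^*(x,0',y_n,t)-\partial_{x_n}\nabla_{x'}'M_i^*(x,0,t)$.
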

\begin{proof}
   Note that
    $$\partial_{x_i}E(x)=\int_0^\infty \partial_{x_i}G_\tau(x)\,d\tau\quad \text{for $x\in\mathbb{R}^n_+,t>0$ and $i=1,2,\dots,n$}.$$
   Therefore for $x\in \mathbb{R}^n_+,y_n>0,t>0$ and $i<n$, one has
    \begin{equation}\label{2.11}
       \begin{aligned} &M^*_i(x,0',y_n,t)\\=&-4\int_{0}^{x_n}\int_{\mathbb{R}^{n-1}}\frac{\partial E(x-z)}{\partial x_{i}}G_{t}(z',y_n+z_n))\,dz\\=&-4\int_0^{x_n}\int_{\mathbb{R}^{n-1}}\int_0^\infty\partial_{x_i}G_\tau^{(n-1)}(x'-z')G_\tau^{(1)}(x_n-z_n)G_t^{(n-1)}(z')G_t^{(1)}(z_n+y_n)\,d\tau \,dz\\=&-4\int_0^\infty\partial_{x_i}G_{\tau+t}^{(n-1)}(x')\int_0^{x_n}G_\tau^{(1)}(x_n-z_n
)G_t^{(1)}(z_n+y_n)\,dz_n\,d\tau,    \end{aligned}\end{equation}where the properties\[\int_{\mathbb{R}^{n-1}}G^{(n-1)}_\tau(x'-z')G^{(n-1)}_t(z')\,dz'=G^{(n-1)}_{\tau+t}(x')\]has been used.

Furthermore, the straightforward computations give
 \begin{equation}\label{change}
       \begin{aligned} &\partial_{x_n}\int_0^{x_n}G_\tau^{(1)}(x_n-z_n
)G_t^{(1)}(z_n+y_n)\,dz_n\\=&G^{(1)}_\tau (0)G_t^{(1)}(x_n+y_n)-\int_0^{x_n}\partial_{z_n}G_\tau^{(1)}(x_n-z_n)G_t^{(1)}(z_n+y_n)\,dz\\=&G^{(1)}_\tau (0)G_t^{(1)}(z_n+y_n)+\int_0^{x_n}G_\tau^{(1)}(x_n-z_n)\partial_{y_n}G_t^{(1)}(z_n+y_n)\,dz\\&-G^{(1)}_\tau(0)G_t^{(1)}(z_n+y_n)+G^{(1)}_\tau(x_n)G_t^{(1)}(y_n)\\=&\partial_{y_n}\int_0^{x_n}G_\tau^{(1)}(x_n-z_n)G_t^{(1)}(z_n+y_n)\,dz+G^{(1)}_\tau(x_n)G_t^{(1)}(y_n). \end{aligned}\end{equation}
Combining this with 
 (\ref{2.11}), 
 yields Lemma \ref{lem:2.4} for $i<n$. Similarly, (\ref{2.10}) holds for $i=n$.
\end{proof}Using the property $\nabla\cdot \boldsymbol{u}=0$ and (\ref{duhamel}), problem (\ref{1.1}) can be also written as$$\begin{aligned}\boldsymbol{u}(x,t)=e^{-t\mathbb{A}}\boldsymbol{a}-\int_0^t e^{-(t-s)\mathbb{A}}\mathbb{P}\nabla\cdot (\boldsymbol{u}\otimes \boldsymbol{u})\,ds.\end{aligned}$$
We next introduce some analysis for the operator $e^{-(t-s)\mathbb{A}}\mathbb{P}\nabla\cdot$ in the second term on the right-hand side.

 From \cite{2003S_1} or \cite[(2.6)]{2017CJ}, for matrix function $\mathscr{F}(y,s)=(F_{jk}(y,s))_{j,k=1,2,\dots,n}$ with $F_{nk}\big|_{y_n=0}=0$, $x\in \mathbb{R}^n_+$, $t>s>0$ and $i=1,2,\dots,n,$
 \begin{equation}\label{4.40}\begin{aligned}
    &[e^{-(t-s)\mathbb{A}}\mathbb{P}\nabla\cdot \mathscr{F}]_i=-\sum_{j\neq n}\sum_{l=1}^n\int_{\mathbb{R}^n_+}\partial_{y_l}\mathcal{G}_{ij}(x,y,t-s)F_{lj}(y,s)\,dy\\&+\sum_{j\neq n}\int_{\mathbb{R}^n_+}\partial_{y_j}\mathcal{G}_{ij}(x,y,t-s)F_{in}(y,s)\,dy+\sum_{l\neq n}\int_{\mathbb{R}^n_+}\partial_{y_l}\mathcal{G}_{in}(x,y,t-s)F_{nl}(y,s)\,dy\\&-\sum_{k,j,l\neq n}\int_{\mathbb{R}^n_+}\partial_{x_j}\partial_{x_l}K_{ijk}^+(x,y,t-s)F_{lk}(y,s)\,dy-\sum_{j,l\neq n}\int_{\mathbb{R}^n_+} \partial_{x_j}\partial_{x_l}K_{ijn}^-(x,y,t-s)(F_{ln}+F_{nl})(y,s)\,dy\\&+\sum_{j,m\neq n}\int_{\mathbb{R}^n_+} \partial^2_{x_m}K_{ijj}^+(x,y,t-s)F_{nn}(y,s)\,dy-\sum_{k,l\neq n}\int_{\mathbb{R}^n_+} \partial_{x_k}\partial_{x_l}K_{inn}^-(x,y,t-s)F_{lk}(y,s)\,dy\\&+
    \sum_{l,m\neq n}\int_{\mathbb{R}^n_+} \partial^2_{x_m}K_{inl}^+(x,y,t-s)(F_{ln}+F_{nl})(y,s)\,dy+\sum_{m\neq n}\int_{\mathbb{R}^n_+}\partial_{x_m}^2K^-_{inn}(x,y,t-s)F_{nn}(y,s)\,dy,
\end{aligned}\end{equation}
    where $$K^\pm_{ijq}(x,y,t-s)=\int_{\mathbb{R}^n_+}\mathcal{G}_{ij}(x,z,t-s)\partial_{z_q}E^{\pm}(y,z)\,dz,$$ with $$E^+(y,z)=E(y-z) \quad\text{and}\quad E^-(y,z)=E(y-z^*).$$
    Solonnikov \cite{2003S_1} gave the estimates
    \begin{equation}\label{6.47}
        |\partial_{x'}^{m'}K^\pm_{ijq}(x,y,t)|\leq (|x-y|+\sqrt{t})^{-(n-1+|m'|)}.
    \end{equation}
    Crispo and Maremonti \cite{2006CM} improved the estimate as
    \begin{equation}
    \begin{aligned}
        &|\partial_t^k\partial_{x}^{m}\partial_y^{l'} K^\pm_{ijq}(x,y,t)|\leq C(|x-y|+\sqrt{t})^{-(n-1+|m|+|l'|+2k)}\\&\quad+C(x_n+\sqrt{t})^{-m_n}(|x-y|+\sqrt{t})^{-(n-1+|m'|+|l'|)}\left[(x_n+\sqrt{t})^{-m_n}t^{-k}+(|x_n-y_n|+\sqrt{t})^{-(m_n+2k)}\right].
    \end{aligned}\end{equation}We now recall several existence and decay results of solution to problem (\ref{1.1}).\begin{Lemma}[\hspace{-0.1pt}\cite{2001FM}]\label{lem:2.1}
Let the initial data $\boldsymbol{a} \in L^2_\sigma(\mathbb{R}^n_+) \cap L^n(\mathbb{R}^n_+)\ (n \geq 2)$. Then there exist $T > 0$ and a unique strong solution $\boldsymbol{u} \in C([0,T); L^n_\sigma(\mathbb{R}^n_+))$ of (\ref{1.1}). Moreover, there exists an $\varepsilon_0 > 0$ such that if $\|\boldsymbol{a}\|_{L^n(\mathbb{R}^n_+)} \leq \varepsilon_0$ for $n \geq 3$, then the solution exists globally, i.e., $T = +\infty$. For $n = 2$, the solution exists globally without any smallness condition.\end{Lemma}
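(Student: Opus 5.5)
This result is Lemma \ref{lem:2.1}, taken from \cite{2001FM}, and it is the classical Kato--Fujita type well-posedness theorem for the Duhamel formulation (\ref{duhamel}) in $L^n_\sigma(\mathbb{R}^n_+)$. I would prove it by a contraction argument in the Kato space
\[
X_T=\Big\{\boldsymbol{u}\in C([0,T);L^n_\sigma):\ \sup_{0<t<T} t^{\frac12-\frac{n}{2q}}\|\boldsymbol{u}(t)\|_{L^q}<\infty\Big\},
\]
with a fixed $q\in(n,2n)$. If needed I would add the gradient norm $\sup_t t^{1/2}\|\nabla\boldsymbol{u}(t)\|_{L^n}$.

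\textbf{Linear estimates.} The ingredients are the $L^r$--$L^q$ bounds for the Stokes semigroup in the half space, $\|e^{-t\mathbb{A}}\boldsymbol{a}\|_{L^q}\le Ct^{-\frac n2(\frac1r-\frac1q)}\|\boldsymbol{a}\|_{L^r}$ (Ukai \cite{1987U}), and the composite operator bound
\[
\|e^{-t\mathbb{A}}\mathbb{P}\nabla\cdot\mathscr F\|_{L^q}\le Ct^{-\frac12-\frac n2(\frac1r-\frac1q)}\|\mathscr F\|_{L^r}.
\]
The second bound follows from the representation (\ref{4.40}) together with the kernel bounds (\ref{2.9}) and (\ref{6.47}), or by duality from the gradient estimate of the semigroup.

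\textbf{Fixed point.} With $\mathscr F=\boldsymbol{u}\otimes\boldsymbol{u}\in L^{q/2}$, the bilinear term satisfies $\|B(\boldsymbol{u},\boldsymbol{w})\|_{X_T}\le C\|\boldsymbol{u}\|_{X_T}\|\boldsymbol{w}\|_{X_T}$, because the resulting time integral is a Beta function and the exponents balance exactly at scaling criticality. For $\boldsymbol{a}\in L^n_\sigma$, density of $C^\infty_{0,\sigma}$ gives $t^{\frac12-\frac n{2q}}\|e^{-t\mathbb{A}}\boldsymbol{a}\|_{L^q}\to0$ as $T\to0$. The Banach fixed point theorem then produces a local solution, and uniqueness holds in that class. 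Continuity in $L^n$ follows from the same estimates with $q=n$.

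\textbf{Global existence for $n\ge3$.} The bound $\sup_t t^{\frac12-\frac n{2q}}\|e^{-t\mathbb{A}}\boldsymbol{a}\|_{L^q}\le C\|\boldsymbol{a}\|_{L^n}$ holds uniformly on $(0,\infty)$. So if $\|\boldsymbol{a}\|_{L^n}\le\varepsilon_0$, the contraction closes with $T=\infty$.

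\textbf{Global existence for $n=2$.} Here $L^n=L^2$, and I would use the energy equality $\|\boldsymbol{u}(t)\|_2^2+2\int_0^t\|\nabla\boldsymbol{u}\|_2^2=\|\boldsymbol{a}\|_2^2$. This is justified for the strong solution because the trilinear term vanishes, since $\nabla\cdot\boldsymbol{u}=0$ and $\boldsymbol{u}|_{\partial}=0$.

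\textbf{Main obstacle.} The delicate step is the $n=2$ case. The local existence time depends on the profile of the data, not only on its $L^2$ norm, so the energy bound alone does not extend the solution. I would handle it in one of two ways:
\begin{itemize}
\item use the Ladyzhenskaya inequality to get a uniform bound on $\|\nabla\boldsymbol{u}\|_2$ for $t\ge\delta$, so the solution can be restarted from $H^1$ data with a uniform existence time; or
\item pick $t_0$ at which $\|\nabla\boldsymbol{u}(t_0)\|_2$ is small, which is possible by the dissipation integral, and restart from there.
\end{itemize}
Either route excludes blow-up of the $X_T$ norm and gives $T=\infty$.
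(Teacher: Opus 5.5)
Your proposal is correct and is essentially the argument behind the paper's citation. The paper gives no proof of its own but quotes Fujigaki--Miyakawa, whose well-posedness statement rests on this same Kato iteration in $L^n_\sigma(\mathbb{R}^n_+)$: Ukai's half-space $L^r$--$L^q$ estimates together with their dual form for $e^{-t\mathbb{A}}\mathbb{P}\nabla\cdot$, uniform-in-time closure for small $\|\boldsymbol{a}\|_{L^n}$ when $n\ge3$, and the energy identity to continue the solution when $n=2$. One point to tighten: for uniqueness by contraction you should build $\lim_{t\to0}t^{\frac12-\frac{n}{2q}}\|\boldsymbol{u}(t)\|_{L^q}=0$ into the class, because finiteness of the supremum alone does not make two solutions small on a short initial interval.
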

\begin{Lemma}[\hspace{-0.1pt}\cite{1988BM,2001FM,2012H,2014Hw}]\label{lem:decay}
For $n\geq 2$, assume $\boldsymbol{a} \in L^2_\sigma(\mathbb{R}^n_+) \cap L^1(\mathbb{R}^n_+) \cap L^n(\mathbb{R}^n_+)$.
If, in addition $\|\boldsymbol{a}\|_{L^n(\mathbb{R}^n_+)}$ is sufficiently small in the case $n \geq 3$, then the global strong solution $\boldsymbol{u}$ given in Lemma \ref{lem:2.1} satisfies
\begin{equation}\label{one}
    \|\boldsymbol{u}(t)\|_{L^2(\mathbb{R}^n_+)}\leq CK_1(\boldsymbol{a})(1+t)^{-\frac{n}{4}} \quad\text{ for } t>0,
\end{equation}
where the constant $C$ depends only on $n$, and $K_1(\boldsymbol{a})=\|\boldsymbol{a}\|_{L^1(\mathbb{R}^n_+)}+\|\boldsymbol{a}\|^2_{L^1(\mathbb{R}^n_+)}+\|\boldsymbol{a}\|_{L^2(\mathbb{R}^n_+)}+\|\boldsymbol{a}\|^2_{L^2(\mathbb{R}^n_+)}$.

Moreover, if $x_n\boldsymbol{a}\in L^1(\mathbb{R}^n_+)$, then 
\begin{equation}\label{two}
    \|\boldsymbol{u}(t)\|_{L^2(\mathbb{R}^n_+)}\leq CK_2(\boldsymbol{a})(1+t)^{-\frac{n+2}{4}}\quad\text{ for } t>0,
\end{equation}
where the constant $C$ depends only on $n$, and $K_2(\boldsymbol{a})=K_1(\boldsymbol{a})+\int_{\mathbb{R}^n_+} y_n |\boldsymbol{a}(y)| \, dy + \left( \int_{\mathbb{R}^n_+} y_n |\boldsymbol{a}(y)| \, dy \right)^2$.

Furthermore, if\[
\||x|\boldsymbol{a}\|_{L^2(\mathbb{R}^n_+)} + \|(1+|x|)\nabla \boldsymbol{a}\|_{L^2(\mathbb{R}^n_+)} + \||x|\boldsymbol{a}\|_{L^1(\mathbb{R}^n_+)} < \infty,
\]
then 
\begin{equation}\label{three}
\||x|^\gamma \boldsymbol{u}(t)\|_{L^2(\mathbb{R}^n_+)} \leq C(1+t)^{-\frac{n}{4}+\frac{\gamma}{2}} \quad\text{ for } 0 < \gamma < 1 \text{ and }t>0.
\end{equation}\end{Lemma}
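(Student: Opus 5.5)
We would follow the semigroup/Duhamel approach of \cite{2001FM,1988BM}, treating the three estimates in order of increasing difficulty. The basic tools are the Stokes semigroup bounds on $\mathbb{R}^n_+$:
\[
\|e^{-t\mathbb{A}}\boldsymbol{a}\|_{L^2}\le Ct^{-\frac n4}\|\boldsymbol{a}\|_{L^1},\qquad \|e^{-t\mathbb{A}}\mathbb{P}\nabla\cdot\mathscr{F}\|_{L^2}\le Ct^{-\frac12-\frac n4}\|\mathscr{F}\|_{L^1}.
\]
The first follows from \eqref{3.26} together with the kernel bound \eqref{2.9}. The second follows from the representation \eqref{4.40}, using the estimates on $\partial_y\mathcal{G}$ and on $K^\pm_{ijq}$. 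Alongside these we use the energy inequality
\[
\|\boldsymbol{u}(t)\|_{L^2}^2+2\int_0^t\|\nabla\boldsymbol{u}\|_{L^2}^2\,ds\le\|\boldsymbol{a}\|_{L^2}^2,
\]
which holds for the strong solution of Lemma \ref{lem:2.1}.

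\textbf{Step 1 (estimate \eqref{one}).} Write \eqref{duhamel} with the nonlinearity in divergence form and take $L^2$ norms. The linear part gives $C\|\boldsymbol{a}\|_{L^1}t^{-n/4}$. In the nonlinear part we use $\|\boldsymbol{u}\otimes\boldsymbol{u}\|_{L^1}\le\|\boldsymbol{u}\|_{L^2}^2$, and split the time integral at $t/2$:
\begin{itemize}
\item on $(0,t/2)$ we use the $t^{-\frac12-\frac n4}$ bound above;
\item on $(t/2,t)$ we use instead $\|(\boldsymbol{u}\cdot\nabla)\boldsymbol{u}\|_{L^1}\le\|\boldsymbol{u}\|_{L^2}\|\nabla\boldsymbol{u}\|_{L^2}$ with the non-derivative $L^1\to L^2$ bound.
\end{itemize}
A continuity (bootstrap) argument on $\sup_s(1+s)^{n/4}\|\boldsymbol{u}(s)\|_{L^2}$ then closes for $n\ge3$, since $n/2>1$ makes the time integrals converge; the smallness of $\|\boldsymbol{a}\|_{L^n}$ is used here. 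For $n=2$ the exponent is borderline, so we would instead use the Fourier splitting method of Schonbek adapted to the half space by Borchers–Miyakawa. That is, split the spectral resolution of $\mathbb{A}$ at level $\rho(t)\sim k/(1+t)$, bound the low-frequency part through the above $L^1$ estimates, and absorb the rest with the energy inequality. This produces the quadratic dependence on $\|\boldsymbol{a}\|_{L^1},\|\boldsymbol{a}\|_{L^2}$ seen in $K_1$.

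\textbf{Step 2 (estimate \eqref{two}).} The extra $t^{-1/2}$ comes from the Dirichlet condition: the kernel vanishes at $y_n=0$. Concretely, $G_t(x-y)-G_t(x-y^*)=-\int_{-y_n}^{y_n}\partial_{x_n}G_t(x'-y',x_n-\eta)\,d\eta$. For the $M$-part, Lemma \ref{lem:2.4} (and the analogous identity obtained by writing $M(x,y,t)-M(x,y',0,t)$ via $\partial_{y_n}$, with the factor $t^{-1/2}$ in \eqref{2.9}) gives $\|e^{-t\mathbb{A}}\boldsymbol{a}\|_{L^2}\le Ct^{-\frac{n+2}4}\int y_n|\boldsymbol{a}|\,dy$. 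We would try the subtraction at $y_n=0$ first. For the nonlinear term we feed Step 1's rate into $\|\boldsymbol{u}\otimes\boldsymbol{u}\|_{L^1}\lesssim(1+s)^{-n/2}$; since $\frac12+\frac n4<\frac n2+\frac12\ldots$, the split-at-$t/2$ argument yields the rate $(1+t)^{-\frac{n+2}4}$, with the constant $K_2$. This step requires $n/2>1$, and in $n=2$ it again needs a logarithm-free refinement via a second bootstrap.

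\textbf{Step 3 (estimate \eqref{three}).} This is a weighted energy estimate, which we expect to be the main obstacle because the pressure does not localize. Test the equation with $|x|^{2\gamma}\boldsymbol{u}$ (truncated weights, then a limit). Terms involving $\nabla|x|^{2\gamma}$ are controlled by $\||x|^{\gamma}\boldsymbol{u}\|_{L^2}^{(2\gamma-1)/\gamma}\|\boldsymbol{u}\|_{L^2}^{\ldots}\|\nabla \boldsymbol{u}\|$ via interpolation, using $\gamma<1$. The pressure term $\int p\,\boldsymbol{u}\cdot\nabla|x|^{2\gamma}$ requires weighted $L^2$ bounds on $p$ from the half-space pressure representation, where the hypotheses on $|x|\boldsymbol{a},(1+|x|)\nabla\boldsymbol{a}$ enter. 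Gronwall then gives growth $(1+t)^{-\frac n4+\frac\gamma2}$, consistent with interpolating between \eqref{one} and the bound $\||x|\boldsymbol{u}\|_{L^2}\lesssim(1+t)^{-\frac n4+\frac12}$.
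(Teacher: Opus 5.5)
The paper gives no proof of this lemma. It is quoted from \cite{1988BM,2001FM,2012H,2014Hw}, with only a remark that $K_1,K_2$ can be traced through \cite{1988BM,2001FM}. Your sketch tries to rebuild those arguments, but several of the estimates you actually write down do not close.

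\textbf{Steps 1 and 2.} The near-diagonal piece fails. On $(t/2,t)$ you get $\int_{t/2}^t (t-s)^{-n/4}\|\boldsymbol{u}(s)\|_{L^2}\|\nabla\boldsymbol{u}(s)\|_{L^2}\,ds$, but the energy inequality only gives $\nabla\boldsymbol u\in L^2(0,\infty;L^2)$. Cauchy--Schwarz in $s$ would then need $(t-s)^{-n/2}$ to be integrable at $s=t$, which is false for every $n\ge2$. For $n\ge4$ the factor $(t-s)^{-n/4}$ is non-integrable even if $\|\nabla\boldsymbol u(s)\|_{L^2}$ were bounded pointwise in time. Near the diagonal you need an $L^p\to L^2$ bound with $p$ close to $2$, combined with extra smoothing of the strong solution (e.g.\ Kato-type bounds such as $\sup_s s^{1/2}\|\boldsymbol u(s)\|_{L^\infty}\lesssim\|\boldsymbol a\|_{L^n}$). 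Your proposal never invokes such a bound.

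The bootstrap is also quadratic. With $\|\boldsymbol u\otimes\boldsymbol u\|_{L^1}\le\|\boldsymbol u\|_{L^2}^2$, the quantity $N(t)=\sup_{s\le t}(1+s)^{n/4}\|\boldsymbol u(s)\|_{L^2}$ only satisfies $N\le C(\|\boldsymbol a\|_{L^1}+\|\boldsymbol a\|_{L^2})+CN^2$. This closes only if $\|\boldsymbol a\|_{L^1}+\|\boldsymbol a\|_{L^2}$ is small, which is not assumed. Moreover $\|\boldsymbol a\|_{L^n}$ appears nowhere in your inequalities, so the claim that its smallness is ``used here'' has no content. For $L^1\cap L^2$ data of arbitrary size you have two options:
\begin{itemize}
\item linearize the bootstrap by putting one factor of $\boldsymbol u\otimes\boldsymbol u$ in a Kato-type norm that is small because $\|\boldsymbol a\|_{L^n}$ is small, as in \cite{2001FM};
\item run Fourier splitting for every $n$, as in \cite{1988BM}: the energy inequality plus a spectral cutoff of $\mathbb{A}$ at level $\sim(1+t)^{-1}$. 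This needs only low-frequency bounds on the Duhamel term and so avoids the near-diagonal problem entirely.
\end{itemize}
Step 2 inherits the same near-diagonal defect. Its linear part is right in spirit, but the $t^{-1/2}$ gain of the $M$-term does not come from Lemma~\ref{lem:2.4}. It comes from the cancellation $\int_{\mathbb{R}^n_+}M(x,y',0,t)\boldsymbol a(y)\,dy=0$. This holds because $M_{ij}=-\partial_{y_j}M_i^*$ for $j<n$, and $y'\mapsto\int_0^\infty(a_1,\dots,a_{n-1})(y',y_n)\,dy_n$ is divergence-free in $\mathbb{R}^{n-1}$ by $\nabla\cdot\boldsymbol a=0$ and $a_n|_{y_n=0}=0$. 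You need to state and prove this.

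\textbf{Step 3.} This is a plan, not a proof. The pressure term, which you rightly call the crux, is left open. Weighted control of the half-space pressure is exactly what \cite{2012H,2014Hw} supply, and it is why $(1+|x|)\nabla\boldsymbol a\in L^2$ is assumed.

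The endpoint you quote as a consistency check, $\||x|\boldsymbol u\|_{L^2}\lesssim(1+t)^{-\frac n4+\frac12}$, is false for $n=2$. By Case~(2) of Theorem~\ref{thm:1.2}, $|\boldsymbol u(x,t)|\ge C(t)|x|^{-n}$ in a cone whenever some $\mathscr A_j[\boldsymbol a,\boldsymbol u](t)\neq0$. Hence $|x|\boldsymbol u(t)\notin L^2(\mathbb{R}^2_+)$ in general, since $\int^\infty r^{-2}\,r\,dr=\infty$. This is precisely why \eqref{three} is restricted to $\gamma<1$. It also means the weighted estimate cannot come from interpolation with $\gamma=1$; your weighted-energy and Gronwall argument must use $\gamma<1$ essentially, especially in the pressure and tail terms.
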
\begin{Remark}
The explicit dependence of the constants on the right-hand sides of (\ref{one}) and (\ref{two}) on the initial data $\boldsymbol{a}$ can be derived by tracing the proofs in \cite{1988BM,2001FM}.
\end{Remark}

\begin{Lemma}[\hspace{-0.1pt}\cite{2017CJS,2017CJ}]\label{lem:2.3}
    Assume  $\boldsymbol{a}\in L^2_{\sigma}(\mathbb{R}^n_+)\ (n\geq 2)$ satisfies
   $|\boldsymbol{a}(x)|\leq \frac{C}{(1+|x|)^n}$.
    Then there exists a $T_2>0$ and a unique strong solution $\boldsymbol{u}\in L^\infty(\mathbb{R}^n_+\times(0,T_2))$ of (\ref{1.1}) satisfying
    \begin{equation}
        |\boldsymbol{u}(x,t)| \leq \frac{C}{(1+|x|+\sqrt{t})^{n}}\quad \text{for } t \in (0,T_2).
    \end{equation}
\end{Lemma}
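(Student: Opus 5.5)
The plan is to construct the solution by a contraction argument for the integral form
\[
\boldsymbol{u}=e^{-t\mathbb{A}}\boldsymbol{a}-\int_0^t e^{-(t-s)\mathbb{A}}\mathbb{P}\nabla\cdot(\boldsymbol{u}\otimes\boldsymbol{u})\,ds .
\]
I would work in the weighted space
\[
X_T=\Bigl\{\boldsymbol{u}:\ \|\boldsymbol{u}\|_{X_T}:=\sup_{0<t<T,\ x\in\mathbb{R}^n_+}(1+|x|+\sqrt t)^{n}|\boldsymbol{u}(x,t)|<\infty\Bigr\}.
\]
A fixed point in a small ball of $X_T$ immediately gives the bound claimed in Lemma \ref{lem:2.3}. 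Since $X_T\subset L^\infty\cap L^2$ for bounded $t$, uniqueness among strong solutions then follows from the uniqueness part of Lemma \ref{lem:2.1}, or from a Gronwall-type argument in the same norm.

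\textbf{Step 1: the linear term.} I would use the representation (\ref{3.26}) and split it into the heat part $G_t(x-y)-G_t(x-y^*)$ and the correction $M$.
\begin{itemize}
\item For the heat part, convolving a Gaussian with $(1+|y|)^{-n}$ gives $C(1+|x|+\sqrt t)^{-n}$ for $t\le 1$. One treats separately $|y|\le|x|/2$, where the kernel is small, and $|y|>|x|/2$, where the data is small.
\item For $M$, I would use (\ref{2.9}) with $k=l=0$. The factor $e^{-cy_n^2/t}$ confines $y_n\lesssim\sqrt t$, which yields a factor $\sqrt t$. The remaining integral $\int_{\mathbb{R}^{n-1}}(|x'-y'|+x_n+\sqrt t)^{-n}(1+|y'|)^{-n}\,dy'$ is bounded by $C\,t^{-1/2}(1+|x|)^{-n}$ after splitting near and far from $x'$, and this $t^{-1/2}$ is compensated by the $\sqrt t$ from the $y_n$ integral.
\end{itemize}
Together these give $\|e^{-t\mathbb{A}}\boldsymbol{a}\|_{X_T}\le C_0$.

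\textbf{Step 2: the bilinear term.} With $F=\boldsymbol{u}\otimes\boldsymbol{v}$, we have $|F(y,s)|\le \|\boldsymbol{u}\|_{X_T}\|\boldsymbol{v}\|_{X_T}(1+|y|)^{-2n}$, and $F_{nk}|_{y_n=0}=0$. I would estimate each term of (\ref{4.40}) separately.
\begin{itemize}
\item For the terms with $\partial_{y_l}\mathcal{G}$, the heat-kernel derivative bounds and (\ref{2.9}) with $|l|=1$ give a kernel of size $(t-s)^{-1/2}$ times a profile integrable in $y$ with $(|x-y|+\sqrt{t-s})^{-n}$-type tails. Integrated against $(1+|y|)^{-2n}$, this gives $C(t-s)^{-1/2}(1+|x|+\sqrt t)^{-n}$.
\item For the $K^\pm$ terms, I would use the Crispo--Maremonti bound with two tangential derivatives, $|\partial_{x'}^2K^\pm|\lesssim(|x-y|+\sqrt{t-s})^{-(n+1)}$, up to the extra boundary-layer pieces. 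Against $(1+|y|)^{-2n}$ the integral is $\lesssim (t-s)^{-1/2}(1+|x|)^{-n}$.
\end{itemize}
Integrating in $s\in(0,t)$ then gives
\[
\|B(\boldsymbol{u},\boldsymbol{v})\|_{X_T}\le C\sqrt T\,\|\boldsymbol{u}\|_{X_T}\|\boldsymbol{v}\|_{X_T}.
\]
Choosing $T_2$ with $4CC_0\sqrt{T_2}<1$ makes the map a contraction on the ball of radius $2C_0$.

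\textbf{Main obstacle.} The hard step is the $K^\pm$ terms of (\ref{4.40}), specifically the boundary-layer piece of the Crispo--Maremonti estimate involving $(x_n+\sqrt{t-s})^{-m_n}$ and $(|x_n-y_n|+\sqrt{t-s})^{-m_n}$. One must check that these produce at most a $(t-s)^{-1/2}$ singularity, which is integrable, without degrading the spatial decay below order $n$. I would handle this by noting that in (\ref{4.40}) all derivatives falling on $K^\pm$ are tangential ($m_n=0$). Then only the $(|x-y|+\sqrt{t-s})^{-(n+1)}$ decay is needed, and the decay $(1+|y|)^{-2n}$ of $F$ (better than $n$) absorbs the near-field region $|y-x|\le|x|/2$.
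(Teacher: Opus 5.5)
Your argument is correct, and it follows essentially the route of the works the paper cites for this lemma; the paper itself gives no proof, and it attributes the representation \eqref{4.40} to the same source. That route is a pointwise bound for $e^{-t\mathbb{A}}\boldsymbol{a}$ from Solonnikov's formula and \eqref{2.9}, followed by a contraction in the weighted sup-norm space via \eqref{4.40}, where only tangential derivatives fall on $K^\pm$, so \eqref{6.47} suffices.

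One point should be tightened. Uniqueness in the class $L^\infty(\mathbb{R}^n_+\times(0,T_2))$ is most cleanly obtained by running the difference estimate in the unweighted sup norm, rather than in $X_T$ or through Lemma~\ref{lem:2.1}. The same kernel bounds give $\int_{\mathbb{R}^n_+}|\text{kernel}(x,y,t-s)|\,dy\lesssim (t-s)^{-1/2}$ uniformly in $x$, which is all that argument needs.
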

Define the function space with vanishing normal trace as
\begin{equation}\label{2.20}
Z_{\bar{b},b}=\left\{f=(f_1,f_2,\dots,f_n)\in L^\infty_{loc}(\mathbb{R}^n_+)\middle| 
\begin{aligned}
&\|f\|_{Z_{\bar{b},b}} = \sup_{x\in\mathbb{R}^n_+} \frac{|f(x)|(1+|x|)^{\bar{b}}(1+x_n)^b}{x_n^b} < +\infty,\\
&\nabla\cdot f = 0\ \text{and}\ f_n|_{\partial\mathbb{R}^n_+} = 0
\end{aligned}
\right\}.
\end{equation}
 \begin{Lemma}[{\!\!\cite[Theorem 2.3]{2025KLLT}}]\label{lem:2.6}
     Let $n \geq 2, b \in [0, 1]$, and $b< \bar{b} \leq n$. For any initial data $\boldsymbol{a} \in Z_{\bar{b},b}$, there exist $T_3>0$ and a mild solution $\boldsymbol{u} \in L^\infty (0, T_3;Z_{\bar{b},b })$ of (\ref{1.1}). Moreover, the mild solution is unique in
the class $L^\infty(\mathbb{R}^n_+\times (0,T_3))$.
 \end{Lemma}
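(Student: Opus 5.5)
The plan is a contraction mapping argument for the mild formulation (\ref{duhamel}) in the space $X_T:=L^\infty(0,T;Z_{\bar b,b})$. Uniqueness is then handled separately in the larger class $L^\infty(\mathbb{R}^n_+\times(0,T_3))$. Write the mild formulation as
\[
\boldsymbol{u}=e^{-t\mathbb{A}}\boldsymbol{a}-B(\boldsymbol{u},\boldsymbol{u}),\qquad B(\boldsymbol{u},\boldsymbol{w})(t):=\int_0^t e^{-(t-s)\mathbb{A}}\mathbb{P}\nabla\cdot(\boldsymbol{u}\otimes\boldsymbol{w})(s)\,ds .
\]
It then suffices to prove two estimates:
\[
\|e^{-t\mathbb{A}}\boldsymbol{a}\|_{X_T}\le C_0\|\boldsymbol{a}\|_{Z_{\bar b,b}},\qquad \|B(\boldsymbol{u},\boldsymbol{w})\|_{X_T}\le C_1T^{\frac12}\|\boldsymbol{u}\|_{X_T}\|\boldsymbol{w}\|_{X_T}\quad(T\le1).
\]
Given these, the map $\boldsymbol{u}\mapsto e^{-t\mathbb{A}}\boldsymbol{a}-B(\boldsymbol{u},\boldsymbol{u})$ contracts on the ball of radius $2C_0\|\boldsymbol{a}\|_{Z_{\bar b,b}}$ once $T_3$ is small. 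The constraints $\nabla\cdot\boldsymbol{u}=0$ and $u_n|_{\partial\mathbb{R}^n_+}=0$ are preserved automatically, because $e^{-t\mathbb{A}}$ and $\mathbb{P}$ map into solenoidal fields with vanishing normal trace.

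For the linear estimate I would use Solonnikov's formula (\ref{3.26}), and two things must be checked: spatial decay and boundary vanishing.

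For the reflected heat part $\int[G_t(x-y)-G_t(x-y^*)]\boldsymbol{a}(y)\,dy$, I would use the elementary bound
\[
|G_t(x-y)-G_t(x-y^*)|\le C\min\{1,x_ny_n/t\}\,G_{ct}(x-y).
\]
Convolving this with $y_n^b(1+y_n)^{-b}(1+|y|)^{-\bar b}$ returns the same weight in $x$ when $b\le1$. This is the first place where the restriction $b\in[0,1]$ enters.

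For the $M$-part, only the tangential columns $j\neq n$ appear. I would use (\ref{2.9}) together with the factor $e^{-cy_n^2/t}$, which restricts the effective integration to the strip $y_n\lesssim\sqrt t$. The estimate there reduces to an $(n-1)$-dimensional convolution of the kernel $(|x'-y'|+x_n+\sqrt t)^{-n}$ against the data. Since $\bar b\le n$, this gives the decay $(1+|x|)^{-\bar b}$ without a logarithmic loss. Boundary vanishing of $M$ comes from its definition (\ref{2.13}): the $z_n$-integral runs over $(0,x_n)$. Applying the mean value theorem in $x_n$ together with the bound on $\partial_{x_n}M$ from (\ref{2.9}) (case $k_n=1$) gives $|M|\lesssim\min\{1,x_n/\sqrt t\}\times(\cdots)$, which is dominated by $x_n^b/(1+x_n)^b$ since $b\le 1$.

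For the bilinear estimate I would use the representation (\ref{4.40}) with $F_{lk}=u_lw_k$. This satisfies $F_{nk}|_{y_n=0}=0$ and
\[
|F(y,s)|\le \|\boldsymbol{u}\|_{X_T}\|\boldsymbol{w}\|_{X_T}\,\frac{y_n^{2b}(1+|y|)^{-2\bar b}}{(1+y_n)^{2b}}.
\]
The terms containing $\partial_y\mathcal{G}$ carry an integrable singularity $(t-s)^{-1/2}$, and the doubled decay $2\bar b>\bar b$ makes the spatial convolution harmless. These terms therefore contribute $CT^{1/2}$ times the target weight, with boundary vanishing inherited from $\mathcal{G}(x,\cdot)$ as above. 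The $K^\pm$ terms are handled with the Crispo--Maremonti bounds on $\partial_x^mK^\pm$: tangential second derivatives decay like $(|x-y|+\sqrt{t-s})^{-(n+1)}$. Vanishing at $x_n=0$ follows because $\mathcal{G}(x,z,t)=0$ for $x_n=0$, hence $K^\pm=0$ there. Interpolating between the bound for $K^\pm$ and the bound for $\partial_{x_n}K^\pm$ then yields the factor $\min\{1,x_n/\sqrt{t-s}\}^b$.

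I expect this step to be the main obstacle: keeping simultaneously the weight $x_n^b$, the decay $(1+|x|)^{-\bar b}$ with $\bar b$ up to $n$, and a time singularity integrable in $s$ for the $K^\pm$ terms. If the direct interpolation produces a factor $(t-s)^{-b/2-1/2}$, this is still integrable since $b\le1$ only when $b<1$. In the endpoint case $b=1$ I would instead borrow decay from the extra factor $y_n^{2b}$ in $F$ to absorb the loss.

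Finally, uniqueness in $L^\infty(\mathbb{R}^n_+\times(0,T_3))$ follows by applying the same bilinear bound in plain $L^\infty$ norms. There $\sup_x\int|\partial_y\mathcal{G}|\,dy$ and the corresponding $K^\pm$ contributions are $O((t-s)^{-1/2})$. For the difference of two solutions this gives $\|\boldsymbol{u}-\tilde{\boldsymbol{u}}\|_{L^\infty}\le CT^{1/2}\|\boldsymbol{u}-\tilde{\boldsymbol{u}}\|_{L^\infty}$ on short intervals, and iterating in time gives uniqueness on all of $(0,T_3)$.
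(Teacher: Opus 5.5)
The paper gives no proof of this lemma; it is quoted from \cite{2025KLLT}. There, as the introduction indicates, solutions in spaces with pointwise decay and boundary vanishing are constructed with the unrestricted Green tensor. The nonlinear term is written directly as a single kernel integral $\int_0^t\int_{\mathbb{R}^n_+}\partial_{y_p}\breve{G}_{ij}(x,y,t-s)\,(u_pw_j)(y,s)\,dy\,ds$. The pointwise bounds on that kernel, including the vanishing factor in $x_n$, then reduce the linear and bilinear estimates in $Z_{\bar b,b}$ to a few weighted integral lemmas.

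Your overall architecture is the same: a linear bound in $L^\infty(0,T;Z_{\bar b,b})$, a bilinear bound with a positive power of $T$, contraction, and uniqueness in $L^\infty$ from an $O((t-s)^{-1/2})$ kernel bound. The difference is how you handle the nonlinear term. You use Solonnikov's restricted tensor and the decomposition (\ref{4.40}) with the $K^\pm$ kernels, and you obtain the boundary vanishing by hand. Every kernel vanishes on $\{x_n=0\}$, and the mean value theorem with the $\partial_{x_n}$-bounds ((\ref{2.9}) with $k_n=1$, and the Crispo--Maremonti bounds with $m_n=1$) gives the factor $\min\{1,x_n/\sqrt{t-s}\}\le (x_n/\sqrt{t-s})^b$.

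This is a valid alternative. It is self-contained relative to estimates already quoted in the paper. The cost is treating nine kernel types separately and needing an ad hoc endpoint argument at $b=1$, whereas the unrestricted-tensor route handles all terms with one uniform kernel estimate.

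Two points to tighten.

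\emph{The endpoint $b=1$ affects every term, not only $K^\pm$.} For the $\partial_y\mathcal{G}$ terms (the reflected heat part and $\partial_{y_n}M$), the factor $(x_n/\sqrt{t-s})^b$ also multiplies a $(t-s)^{-1/2}$ singularity. Against a merely bounded $F$ this gives $x_n(t-s)^{-1}$ at $b=1$, which is not integrable. Your remedy works there too, because every entry of $\boldsymbol{u}\otimes\boldsymbol{w}$ is $O(y_n^{2b})$ near the boundary. However, the gain is $CT^{\gamma}$ for some $\gamma>0$, not $CT^{1/2}$ as you state.

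\emph{Use a scale-correct form of the Crispo--Maremonti bound.} As printed in the paper, that estimate contains the factor $(x_n+\sqrt t)^{-m_n}$ twice. This is incompatible with the scaling $K^\pm(\lambda x,\lambda y,\lambda^2t)=\lambda^{1-n}K^\pm(x,y,t)$. Taken literally, your mean-value step would only give $x_n/(t-s)$. What you actually need is $|\partial_{x_n}\partial_{x'}^2K^\pm(x,y,t)|\lesssim t^{-1/2}(|x-y|+\sqrt t)^{-(n+1)}$ for $x_n\le\sqrt t$, which a scale-consistent version of the estimate provides.
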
Next, we introduce two technical lemmata which will be used to prove the pointwise estimates.

Recall the definitions of $L_{ij}(x,t)$ in (\ref{L_{ij}}) and the domain $\Lambda_\sigma$ in (\ref{domain}).
\begin{Lemma}\label{lem:L_{ij}}
    For $i=1,2,...,n,j=1,2,\dots,n-1,t>0,x\in\Lambda_{\sigma},\frac{|x|}{\sqrt{t}}>1$ and $\sigma> 1$  large enough, there exist two constants $C, \widetilde{C} > 0$, independent of $t$, such that (\ref{pointwise}) holds.
\end{Lemma}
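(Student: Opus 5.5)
The plan is to compute the second derivatives in (\ref{L_{ij}}) explicitly under the $\tau$-integral. This reduces each $L_{ij}$ to an explicit monomial in $x$ times a positive integral, or to a difference of two such terms. Two-sided bounds for these integrals in the regime $x\in\Lambda_\sigma$, $|x|>\sqrt t$ then give (\ref{pointwise}).

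\textbf{Step 1: explicit formulas.} Using
\[
\partial_{x_i}\partial_{x_j}G^{(n-1)}_{t+\tau}(x')=\Bigl(\frac{x_ix_j}{4(t+\tau)^2}-\frac{\delta_{ij}}{2(t+\tau)}\Bigr)G^{(n-1)}_{t+\tau}(x')\quad (i,j<n)
\]
and $\partial_{x_n}G^{(1)}_\tau(x_n)=-\frac{x_n}{2\tau}G^{(1)}_\tau(x_n)$, and writing $P(\tau):=G^{(n-1)}_{t+\tau}(x')G^{(1)}_\tau(x_n)>0$, we obtain for $j<n$:
\begin{align*}
L_{ij}&=x_ix_j\int_0^\infty\frac{P(\tau)}{(t+\tau)^2}\,d\tau,\qquad i\neq j,\ i<n,\\
L_{nj}&=x_nx_j\int_0^\infty\frac{P(\tau)}{\tau(t+\tau)}\,d\tau,\\
L_{jj}&=x_j^2\int_0^\infty\frac{P(\tau)}{(t+\tau)^2}\,d\tau-2\int_0^\infty\frac{P(\tau)}{t+\tau}\,d\tau.
\end{align*}
Differentiating under the integral is justified because $x_n>0$: the factor $e^{-x_n^2/4\tau}$ kills the singularity at $\tau=0$, and $P(\tau)\lesssim \tau^{-n/2}$ as $\tau\to\infty$.

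\textbf{Step 2: two-sided bounds for the integrals.} For $x\in\Lambda_\sigma$ with $\sigma>1$ we have $x_n\le |x|\le 2x_n$. The claim is that, for $|x|>\sqrt t$,
\[
I_1:=\int_0^\infty\frac{P}{t+\tau}\,d\tau\simeq|x|^{-n},\qquad I_2:=\int_0^\infty\frac{P}{(t+\tau)^2}\,d\tau\simeq|x|^{-n-2},\qquad I_3:=\int_0^\infty\frac{P}{\tau(t+\tau)}\,d\tau\simeq|x|^{-n-2},
\]
with constants independent of $t$.

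For the lower bounds, restrict to $\tau\in[x_n^2,2x_n^2]$. There $t+\tau\le 3|x|^2$, while $|x'|^2/(t+\tau)\le 1$ and $x_n^2/\tau\le1$. Hence $P\ge c|x|^{-(n-1)}|x|^{-1}$, and integrating over an interval of length $x_n^2\simeq|x|^2$ gives each claimed lower bound.

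For the upper bounds, use $G^{(n-1)}_{t+\tau}(x')\le C(t+\tau)^{-(n-1)/2}$ and $G^{(1)}_\tau(x_n)\le C\tau^{-1/2}e^{-x_n^2/4\tau}$. On $\tau\le x_n^2$, the factor $e^{-x_n^2/8\tau}$ absorbs any negative power $\tau^{-k}$ at cost $x_n^{-2k}$. On $\tau\ge x_n^2$, drop the exponential and integrate the power $\tau^{-(n-1)/2-1/2-m}$ with $m\in\{1,2\}$, using $t+\tau\ge\tau$. In both ranges the result is $C|x|^{-n}$, respectively $C|x|^{-n-2}$. The factor $(t+\tau)^{-1}\le \tau^{-1}$ keeps everything uniform in $t$; the restriction $|x|>\sqrt t$ is needed only in the lower bounds.

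\textbf{Step 3: conclusion.} For $i\neq j$ (with $i<n$, or with $i=n$ via $I_3$), Step 1 and Step 2 immediately give $|L_{ij}|\simeq|x_ix_j|\,|x|^{-n-2}$. For $i=j<n$ the two terms in $L_{jj}$ have opposite signs. Since $x_j^2\le|x'|^2\le x_n^2/\sigma^2\le|x|^2/\sigma^2$, the first term is at most $C\sigma^{-2}|x|^{-n}$, while the second is at least $2c|x|^{-n}$. Choosing $\sigma$ so large that $C\sigma^{-2}\le c$ yields $|L_{jj}|\simeq|x|^{-n}$.

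The main obstacle is this diagonal case, since it is the only one with a sign cancellation. It requires the lower bound on $I_1$ and the upper bound on $I_2$ with constants that are explicit and independent of $t$, so that a fixed large $\sigma$ works uniformly. The $n=2$ case needs no change, because only derivatives of $E$ (equivalently, the $\tau$-integrals above) appear.
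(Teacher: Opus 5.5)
Your proof is correct and follows essentially the paper's argument. You compute the second derivatives of $G^{(n-1)}_{t+\tau}(x')G^{(1)}_\tau(x_n)$ explicitly, bound the resulting $\tau$-integrals from below using $|x|>\sqrt{t}$ and from above using $t+\tau\ge\tau$ and $x_n\ge|x|/\sqrt{2}$ on $\Lambda_\sigma$, and in the diagonal case choose $\sigma$ large so that $2\int_0^\infty P/(t+\tau)\,d\tau$ dominates $x_j^2\int_0^\infty P/(t+\tau)^2\,d\tau$. The only cosmetic difference is how you get the lower bounds: you restrict to the window $\tau\in[x_n^2,2x_n^2]$, whereas the paper substitutes $\lambda=|x|/\sqrt{\tau}$ over the whole range. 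Both give constants independent of $t$ and of $\sigma\ge 1$, so the final choice of $\sigma$ is not circular.
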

\begin{proof} \textbf{Step 1. $i=n,j<n$.} For $x\in\mathbb{R}^n_+$, $t>0$ and $\frac{|x|}{\sqrt{t}}\geq1$, one has  $$\begin{aligned}|L_{nj}(x,t)|&=C\int_{0}^{\infty}\frac{x_{n}}{\tau}\frac{|x_{j}|}{t+\tau}\frac{1}{(t+\tau)^{\frac{n-1}{2}}}\frac{1}{\tau^{\frac{1}{2}}}e^{-\frac{|x'|^2}{4(\tau+t)}}e^{-\frac{|x_n|^2}{4\tau}}\,d\tau\\&\geq C\int_{0}^{\infty}\frac{x_n |x_j|}{(\tau+t)^{\frac{n}{2}+2}}e^{-\frac{|x|^2}{4\tau}}\,d\tau\\&\geq C\int_{0}^{\infty}\frac{x_n |x_j|}{|x|^{n+4}}(\frac{\tau+t}{|x|^{2}})^{-(\frac{n}{2}+2)}e^{-\frac{|x|^2}{4\tau}}\,d\tau\\&=C\frac{x_n |x_j|}{|x|^{n+2}}\int_{0}^{\infty}\frac{\lambda^{n+1}}{(\lambda^2+1)^{\frac{n}{2}+2}}e^{\frac{-\lambda^2}{4}}\,d\lambda\\&=C\frac{x_n |x_j|}{|x|^{n+2}}.\end{aligned}$$

  For $x\in\Lambda_{\sigma}$ with $\sigma>1$ and $t>0$, it holds that \begin{equation}\label{4.52}\begin{aligned}|L_{nj}(x,t)|&\leq C\int_{0}^{\infty}\frac{x_n |x_j|}{\tau^{\frac{n}{2}+2}}e^{-\frac{|x'|^2}{4(\tau+t)}}e^{-\frac{x_{n}^{2}}{4\tau}}\,d\tau\\&\leq C\int_{0}^{\infty}\frac{x_n |x_j|}{|x|^{n+4}}(\frac{|x|^2}{\tau})^{\frac{n}{2}+2}e^{-\frac{x^2_{n}}{|x|^2}\frac{|x|^2}{4\tau}}\,d\tau\\&\leq C\frac{x_n |x_j|}{|x|^{n+2}}\int_{0}^{\infty}\lambda^{n+1}e^{-\frac{\sigma^2}{1+\sigma^2}\frac{\lambda^2}{4}}\,d\lambda\\&\leq \widetilde{C}\frac{x_n |x_j|}{|x|^{n+2}}.\end{aligned}\end{equation}

\textbf{Step 2. $i,j<n$ and $j\neq i$.}  For $x\in \mathbb{R}^n_+$, $t>0$ and $\frac{|x|}{\sqrt{t}}\geq 1$, one has 
 \begin{equation}\begin{aligned}|L_{ ij}(x,t)|&=C\int_{0}^{\infty}\frac{|x_{i}|}{t+\tau}\frac{|x_{j}|}{t+\tau}\frac{1}{(t+\tau)^{\frac{n-1}{2}}}\frac{1}{\tau^{\frac{1}{2}}}e^{-\frac{|x'|^2}{4(\tau+t)}}e^{-\frac{|x_n|^2}{4\tau}}\,d\tau\\&\geq C\int_{0}^{\infty}\frac{|x_i x_j|}{(\tau+t)^{\frac{n}{2}+2}}e^{-\frac{|x|^2}{4\tau}}\,d\tau\\&\geq C\int_{0}^{\infty}\frac{|x_i x_j|}{|x|^{n+4}}(\frac{\tau+t}{|x|^{2}})^{-(\frac{n}{2}+2)}e^{-\frac{|x|^2}{4\tau}}\,d\tau\\&\geq C\frac{|x_i x_j|}{|x|^{n+2}}\int_{0}^{\infty}\frac{\lambda^{n+1}}{(\lambda^2+1)^{\frac{n}{2}+2}}e^{-\frac{\lambda^2}{4}}\,d\lambda\\&=C\frac{|x_i x_j|}{|x|^{n+2}}.\end{aligned}.\end{equation}
 For $x\in \Lambda_{\sigma}$ with $\sigma>1$ and $t>0$, it holds that\begin{equation}\begin{aligned}|L_{ij}(x,t)|&\leq C\int_{0}^{\infty}\frac{|x_i x_j|}{\tau^{\frac{n}{2}+2}}e^{-\frac{|x'|^2}{4(\tau+t)}}e^{-\frac{x_{n}^{2}}{4\tau}}\,d\tau\\&\leq C\int_{0}^{\infty}\frac{|x_i x_j|}{|x|^{n+4}}(\frac{|x|}{\sqrt{\tau}})^{n+4}e^{-\frac{|x'|^2}{4(\tau+t)}}e^{-\frac{x^2_{n}}{|x|^2}\frac{|x|^2}{4\tau}}\,d\tau\\&\leq C\int_{0}^{\infty}\frac{|x_i x_j|}{|x|^{n+4}}\lambda^{n+1}e^{-\frac{\sigma^2}{1+\sigma^2}\frac{\lambda^2}{4}}\,d\lambda\\&\leq \widetilde{C}\frac{|x_i x_j|}{|x|^{n+2}}.\end{aligned}\end{equation}

 \textbf{Step 3. $i\leq n-1$ and $j= i$.} Note that
 $$\begin{aligned}L_{ii}(x,t)&= C\int_{0}^{\infty}\frac{x_i x_i}{(\tau+t)^{\frac{n-1}{2}+2}\tau^{\frac{1}{2}}}e^{-\frac{|x'|^2}{4(\tau+t)}}e^{-\frac{x_{n}^{2}}{4\tau}}\,d\tau- 2C\int_{0}^{\infty}\frac{1}{(\tau+t)^{\frac{n-1}{2}+1}\tau^{\frac{1}{2}}}e^{-\frac{|x'|^2}{4(\tau+t)}}e^{-\frac{x_{n}^{2}}{4\tau}}\,d\tau.\end{aligned}$$  For $x\in\Lambda_\sigma$ with $\sigma>1$, $t>0$ and $\frac{|x|}{\sqrt{t}}\geq1$, we have
 \begin{equation}\label{C_1}
     C\frac{|x_ix_i|}{|x|^{n+2}}\leq\left|\int_{0}^{\infty}\frac{x_i x_i}{(\tau+t)^{\frac{n-1}{2}+2}\tau^{\frac{1}{2}}}e^{-\frac{|x'|^2}{4(\tau+t)}}e^{-\frac{x_{n}^{2}}{4\tau}}\,d\tau\right|\leq C_1\frac{|x_ix_i|}{|x|^{n+2}}
 \end{equation}and
 \begin{equation}\label{4.66}
     C_2\frac{1}{|x|^n}\leq\left|\int_{0}^{\infty}\frac{1}{(\tau+t)^{\frac{n-1}{2}+1}\tau^{\frac{1}{2}}}e^{-\frac{|x'|^2}{4(\tau+t)}}e^{-\frac{x_{n}^{2}}{4\tau}}\,d\tau\right|\leq C\frac{1}{|x|^n}.
 \end{equation}
 For $\sigma > \sqrt{\frac{C_1}{C_2}}$, it follows from (\ref{C_1}), (\ref{4.66}) and $x_n > \sigma \lvert x' \rvert$ that there exist $C,\widetilde{C}>0$ such that
 \[
  C\frac{1}{|x|^n}\leq\left|L_{ii}(x,t)\right|\leq \widetilde{C}\frac{1}{|x|^n}.
 \]
 Hence the proof of the lemma is completed.\end{proof}
Next, we have the following elementary lemma for integrals.
\begin{Lemma}\label{lem:2.5}
  For $n\geq2,n+1\leq b<n+2,$  $x\in\mathbb{R}^n_+$ and $t>0$ \begin{equation}\label{2.14}
         \int_0^t\int_{\mathbb{R}^n_+}(|x-y|+\sqrt{t-s})^{-(n+1)}(1+|y|+\sqrt{s})^{-b}\,dy\,ds\leq C(1+|x|+\sqrt{t})^{-(n+1)}\frac{t^{\frac{1}{2}}}{(1+t)^{\frac{b-n-1}{2}}}.
     \end{equation}
     \begin{proof}
         First, one has\begin{equation}\label{2.15}
            \int_0^\frac{t}{2}\int_{\mathbb{R}^n_+}(|x-y|+\sqrt{t-s})^{-(n+1)}(1+|y|+\sqrt{s})^{-b}\,dy\,ds\leq \int_0^\frac{t}{2}(t-s)^{-\frac{1}{2}}\,ds\leq Ct^{\frac{1}{2}}
         \end{equation}
         and
         \begin{equation}\label{2.16}
             \int_\frac{t}{2}^t\int_{\mathbb{R}^n_+}(|x-y|+\sqrt{t-s})^{-(n+1)}(1+|y|+\sqrt{s})^{-b}\,dy\,ds\leq C\frac{t^{\frac{1}{2}}}{(1+t)^{\frac{b}{2}}}.
         \end{equation}
        Let us divide $\mathbb{R}^n_+$ into two domains \[D_1=\left\{y\in\mathbb{R}^n_+:|y|\leq\frac{|x|+\sqrt{t-s}}{2}\right\}\quad \text{and}\quad D_2=\left\{y\in\mathbb{R}^n_+:|y|>\frac{|x|+\sqrt{t-s}}{2}\right\}.\]
         One has\begin{equation}
             \int_{D_1}(|x-y|+\sqrt{t-s})^{-(n+1)}(1+|y|+\sqrt{s})^{-b}\,dy\leq C(|x|+\sqrt{t-s})^{-(n+1)}(1+\sqrt{s})^{n-b}
         \end{equation}
         and
         \begin{equation}
             \int_{D_2}(|x-y|+\sqrt{t-s})^{-(n+1)}(1+|y|+\sqrt{s})^{-b}\,dy\leq C(1+|x|+\sqrt{t})^{-b}(t-s)^{-\frac{1}{2}}.
         \end{equation}
         If $|x|+\sqrt{t}>1$, it holds that
         \begin{equation}\label{2.19}
             \int_0^\frac{t}{2}\int_{\mathbb{R}^n_+}(|x-y|+\sqrt{t-s})^{-(n+1)}(1+|y|+\sqrt{s})^{-b}\,dy\,ds\leq C(|x|+\sqrt{t})^{-(n+1)}\frac{t^{\frac{1}{2}}}{(1+t)^{\frac{b-n-1}{2}}}.
         \end{equation}
         If $|x|>1+\sqrt{t},$ one has
         \begin{equation}\label{2.29}
             \int_\frac{t}{2}^t\int_{\mathbb{R}^n_+}(|x-y|+\sqrt{t-s})^{-(n+1)}(1+|y|+\sqrt{s})^{-b}\,dy\,ds\leq C|x|^{-(n+1)}\frac{t^{\frac{1}{2}}}{(1+t)^{\frac{b-n-1}{2}}}.
         \end{equation}
         Combining (\ref{2.15}), (\ref{2.16}), (\ref{2.19}), and (\ref{2.29}) yields (\ref{2.14}).
     \end{proof}
 \end{Lemma}
 At the end of this section, we present a basic cancellation property for the initial data of problem (\ref{1.1}), which plays a crucial role in the proof of Theorem \ref{thm:1.1}.

 Let $\overline{\mathbb{R}^n_+} = \{(x_1,\ldots,x_n) \in \mathbb{R}^n \mid x_n \geq 0\}$ denote the closed upper half space. The space of smooth functions with compact support in $\overline{\mathbb{R}^n_+}$ is denoted by
\[
C_c^\infty(\overline{\mathbb{R}^n_+}) := \big\{ f: \overline{\mathbb{R}^n_+} \to \mathbb{R} \;\big|\; f \in C^\infty(\overline{\mathbb{R}^n_+}),\ \operatorname{supp}(f) \subset \overline{\mathbb{R}^n_+} \text{ is compact} \big\}.
\]
 \begin{Lemma}[Normal Flux Conservation]\label{A.1}
Let $\boldsymbol{a} = (a_1,a_2, \dots, a_n) \in  L^1(\mathbb{R}^n_+)$ be a vector field satisfying $\nabla \cdot \boldsymbol{a} = 0$ and $a_n|_{x_n=0} = 0$ in the sense of distribution, that is for any $\phi(x)\in C_c^\infty(\overline{\mathbb{R}^n_+}),$
\[\int_{\mathbb{R}^n_+} \boldsymbol{a}(x)\nabla\phi(x)\,dx=0.\]
Then for almost every $x_n\geq 0$, the normal flux satisfies
\[
\int_{\mathbb{R}^{n-1}} a_n(x',x_n) \,dx' = 0.
\]
\end{Lemma}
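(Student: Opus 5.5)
We test the weak divergence-free identity against functions of product form, $\phi(x)=\psi(x_n)\chi_R(x')$. Here $\psi\in C_c^\infty([0,\infty))$ is arbitrary. The cutoff is $\chi_R(x')=\chi(x'/R)$, where $\chi\in C_c^\infty(\mathbb{R}^{n-1})$ satisfies $0\le\chi\le1$, $\chi\equiv1$ on $\{|x'|\le1\}$, and $\chi$ is supported in $\{|x'|\le2\}$. Such a $\phi$ belongs to $C_c^\infty(\overline{\mathbb{R}^n_+})$. It is not required to vanish on $\{x_n=0\}$, and this is exactly how the boundary condition $a_n|_{x_n=0}=0$ enters. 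The hypothesis then gives
\[
0=\int_{\mathbb{R}^n_+}a_n(x)\,\psi'(x_n)\chi_R(x')\,dx+\sum_{k=1}^{n-1}\int_{\mathbb{R}^n_+}a_k(x)\,\psi(x_n)\,\frac{1}{R}(\partial_k\chi)(x'/R)\,dx .
\]

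Next we let $R\to\infty$. The second term is bounded by $CR^{-1}\|\psi\|_{L^\infty}\|\boldsymbol{a}\|_{L^1(\mathbb{R}^n_+)}$, so it tends to $0$. In the first term, $a_n\psi'\in L^1$ and $\chi_R\to1$ pointwise, so dominated convergence applies. Hence, with
\[
f(x_n):=\int_{\mathbb{R}^{n-1}}a_n(x',x_n)\,dx',
\]
which is defined for a.e.\ $x_n$ and lies in $L^1(0,\infty)$ by Fubini, we obtain
\[
\int_0^\infty f(s)\psi'(s)\,ds=0\quad\text{for every }\psi\in C_c^\infty([0,\infty)).
\]

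Now we extract $f=0$ a.e. First we restrict to $\psi\in C_c^\infty((0,\infty))$. The identity says $f'=0$ in $\mathcal{D}'((0,\infty))$, so $f$ equals a constant $c$ a.e. Since $f\in L^1(0,\infty)$, that constant must be $c=0$. Alternatively, one can avoid integrability at infinity and use the boundary. Take $\psi$ with $\psi(0)=1$; then $\int_0^\infty c\,\psi'\,ds=-c\,\psi(0)=-c$, which must vanish, so again $c=0$. Either route gives $f=0$ for a.e.\ $x_n\ge0$, which is the claim.

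The only delicate point is checking that the test functions are admissible and that the cutoff error vanishes. Both are routine: the error is controlled by the $L^1$ bound on the tangential components together with the factor $R^{-1}$ coming from the gradient of the cutoff. No decay beyond $\boldsymbol{a}\in L^1$ is needed.
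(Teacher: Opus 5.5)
Your proof is correct. It shares the paper's skeleton: test functions of the form $\chi_R(x')\times(\text{function of }x_n)$, with the tangential terms killed by the factor $R^{-1}$ from $\nabla\chi_R$ and $\boldsymbol a\in L^1$. The difference is in how you extract $f=0$.

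\textbf{The paper's route.} It fixes a level $\alpha$ and tests with a smoothed step $\eta_\epsilon$ equal to $1$ on $[0,\alpha-\epsilon]$ and $0$ beyond $\alpha+\epsilon$. This test function does not vanish at $x_n=0$, so this is where the boundary condition is used. It then lets $\epsilon\to0$ at Lebesgue points, so that $\eta_\epsilon'\to-\delta_\alpha$ gives $\bigl|\int\chi_R a_n(x',\alpha)\,dx'\bigr|\le C R^{-1}$. Only afterwards does it let $R\to\infty$.

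\textbf{Your route.} You send $R\to\infty$ first for an arbitrary $\psi$, which reduces everything to the one-dimensional identity $\int_0^\infty f\psi'\,ds=0$. You then invoke the du Bois-Reymond lemma.

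\textbf{What this buys.} Your ordering is cleaner. There is a single dominated-convergence limit, and no iterated limit. The paper's version also has an $R$-dependent exceptional set of Lebesgue points, which tacitly requires taking $R$ along a countable sequence; you avoid this.

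Your first route also shows something the paper's proof hides: the boundary condition is not needed for this lemma at all. Interior divergence-freeness already makes $f$ constant, and $f\in L^1(0,\infty)$ forces that constant to vanish. So your sentence that the non-vanishing of $\phi$ on $\{x_n=0\}$ is ``exactly how the boundary condition enters'' applies only to your second route, the one with $\psi(0)=1$. That route is the one closest in spirit to the paper's argument. It needs $\boldsymbol a$ to be integrable only on slabs $\mathbb{R}^{n-1}\times(0,M)$ rather than on all of $\mathbb{R}^n_+$.
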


\begin{proof}
We proceed in two steps.

 \textbf{Step 1. Truncation function construction.} 
For any $R > 0$, let $\chi_R \in C_c^\infty(\mathbb{R}^{n-1})$ satisfy
\[
\chi_R(x') = \begin{cases}
1 & \text{for } |x'| \leq R, \\
0 & \text{for } |x'| \geq 2R,
\end{cases}
\quad \text{with } \|\nabla \chi_R\|_{L^\infty} \leq \frac{C}{R}.
\]
Let $\psi$ be defined by
\[
\psi(t) :=
\begin{cases}
C e^{-\frac{1}{1-t^2}} & t<1, \\
0 & t\geq 1,
\end{cases}
\]
where $C$ is the normalization constant ensuring $\int_{-1}^1 \psi(t)dt = 1$.
For any $\alpha>0$ and $\epsilon > 0$, define $\eta_\epsilon \in C_c^\infty(\mathbb{R})$ as
\[
\eta_\epsilon(y) = 
\begin{cases}
1 & 0 \leq y \leq \alpha-\epsilon, \\
1 - \int_{-1}^{(y-\alpha)/\epsilon} \psi(t) dt & \alpha-\epsilon < y < \alpha+\epsilon, \\
0 & y \geq \alpha+\epsilon.
\end{cases}
\]

Clearly, $\eta_\epsilon(y)$ satisfies

\[
\eta_\epsilon'(y) = -\frac{1}{\epsilon}\psi(\frac{y-\alpha}{\epsilon}).
\]

Therefore, as $\epsilon\to 0^+$, $\eta_\epsilon'$ converges  to $ -\delta(y-\alpha)$ in the the sense of distribution.

Consider test functions $\phi(x',x_n) = \chi_R(x')\eta_\epsilon(x_n)$. The distributional divergence condition yields that for $R>0$ and $\epsilon>0$, one has
\[
\int_0^\infty \int_{\mathbb{R}^{n-1}} \left( \sum_{i=1}^{n-1} a_i \frac{\partial \chi_R}{\partial x_i} \eta_\epsilon + a_n \chi_R \eta_\epsilon' \right) \,dx' \,dx_n = 0.
\]

 \textbf{Step 2. Limit analysis.} 
For the tangential terms
\[
\left| \int_{\mathbb{R}^n_+} a_i \frac{\partial \chi_R}{\partial x_i} \eta_\epsilon \,dx' \,dx_n \right| \leq \frac{C}{R} \|a_i\|_{L^1} \to 0 \quad \text{as } R \to \infty.
\]
For the normal term, we first observe that for almost every $\alpha\in(0,+\infty)$
\[\begin{aligned}
\lim_{\epsilon\to 0}\int_0^\infty \int_{\mathbb{R}^{n-1}} a_n(x)\chi_R(x') \eta_\epsilon'(x_n) \,dx'\,dx_n &= -\lim_{\epsilon\to 0}\int_0^\infty \int_{\mathbb{R}^{n-1}} a_n(x)\chi_R(x')\frac{1}{\epsilon}\psi(\frac{y-\alpha}{\epsilon}) \,dx'\,dx_n\\&=-\int_{\mathbb{R}^{n-1}} \chi_R(x') a_n(x',\alpha)\,dx'.
\end{aligned}\]

By dominated convergence theorem, taking $R \to \infty$ and noting that $\chi_R \to 1$ yields
for almost every $\alpha\in(0,+\infty)$ \[\begin{aligned}
\lim_{R\to +\infty}\lim_{\epsilon \to 0} \int_0^\infty \int_{\mathbb{R}^{n-1}} a_n\chi_R \eta_\epsilon' \,dx'\,dx_n &=-\lim_{R\to +\infty}\int_{\mathbb{R}^{n-1}} \chi_R(x') a_n(x',\alpha)\,dx' \\&=-\int_{\mathbb{R}^{n-1}} a_n(x',\alpha)  \,dx'. \end{aligned}
\]

 This means
\[
\int_{\mathbb{R}^{n-1}} a_n(x',x_n)\,dx' = 0 \quad \text{for almost every } x_n\in(0,+\infty).
\]Hence the proof of the lemma is completed.
\end{proof}
 
\section{$L^1$ profile of the Stokes system}\label{sec:3}In this section, we investigate the behavior of the solution to  (\ref{1.2}), identify the leading term of solution in $L^1(\mathbb{R}^n_+)$ and give the proof of Theorem \ref{thm:1.1}.\begin{proof}[Proof of Theorem \ref{thm:1.1}]Recall (\ref{3.26}), the solution  of (\ref{1.2}) can be represented as follows.\begin{equation}\label{3.26}\begin{aligned}\boldsymbol{v}(x,t)&=\int_{\mathbb{R}^{n}_{+}}\mathcal{G}(x,y,t)\boldsymbol{a}(y)\,dy\\&=\int_{\mathbb{R}^{n}_{+}}[G_{t}(x-y)- G_{t}(x-y^{*})]
\boldsymbol{a}(y)\,dy+\int_{\mathbb{R}^{n}_{+}}M(x,y,t)\boldsymbol{a}(y)\,dy\\&:=\boldsymbol{v}^{(1)}(x,t)+\boldsymbol{v}^{(2)}(x,t).\end{aligned}\end{equation} The rest of the proof is divided into five steps.

\textbf{Step 1. The estimate of $\boldsymbol{v}^{(1)}(x,t)$.} We first show that  $\boldsymbol{v}^{(1)}(\cdot,t)\in L^{1}(\mathbb{R}^n_+)$. Indeed, it holds that
$$\begin{aligned}
    \Vert \boldsymbol{v}^{(1)}(x,t)\Vert_{L^1(\mathbb{R}^n_+)}&\leq\left\Vert\int_{\mathbb{R}^n_+}\left(\int_{-1}^1\partial_{x_n}G_{t}(x'-y',x_n-\sigma y_n)\,d\sigma\right) y_n \boldsymbol{a}(y) \,dy\right\Vert_{L^1(\mathbb{R}^n_+)}\\&\leq \int_{\mathbb{R}^n_+}\left(\int_{-1}^1\Vert\partial_{x_n}G_{t}(x'-y',x_n-\sigma y_n)\Vert_{L^1(\mathbb{R}^n_+)}\,d\sigma\right) y_n |\boldsymbol{a}(y)| \,dy\\&\leq Ct^{-\frac{1}{2}}\int_{\mathbb{R}^n_+}y_n| \boldsymbol{a}(y)|\,dy.
\end{aligned}$$

\textbf{Step 2. Extraction of leading term in $\boldsymbol{v}^{(2)}(x,t)$.} Recall the definition of $M^*_i(x,y,t)$ and $M_{ij}(x,y,t)$ in (\ref{1.6}) and (\ref{2.13}) respectively,  for $i=1,2,\dots,n,x\in\mathbb{R}^n_+$ and $t>0$, we have
\begin{equation}\label{3.27}\begin{aligned}v_{i}^{(2)}(x,t)&=-\sum_{j=1}^{n-1}\int_{\mathbb{R}^{n}_{+}}4(1-\delta_{nj})\partial_{x_j}\left[\int_{0}^{x_n}\int_{\mathbb{R}^{n-1}}\partial_{x_i} E(x-z)G_{t}(z-y^{*})\,dz\right]a_{j}(y)\,dy\\&=\sum_{j=1}^{n-1}\int_{\mathbb{R}^{n}_{+}}4(1-\delta_{nj})\partial_{y_{j}}\left[\int_{0}^{x_n}\int_{\mathbb{R}^{n-1}}\partial_{z_i}E(z)G_{t}(z-(x-y^{*}))\,dz\right]a_j(y)\,dy\\&=-\int_{\mathbb{R}^{n}_{+}}4\left[\int_{0}^{x_n}\int_{\mathbb{R}^{n-1}}\partial_{z_i}E(z)G_{t}(z-(x-y^{*}))\,dz\right]\sum_{j=1}^{n-1}\ \partial_{y_j}a_{j}(y)\,dy\\&=\int_{\mathbb{R}^{n}_{+}}4\left[\int_{0}^{x_n}\int_{\mathbb{R}^{n-1}} \partial_{z_i}E(z)G_{t}(z-(x-y^{*}))\,dz\right]\partial_{y_n}a_{n}(y)\,dy\\&=-\int_{\mathbb{R}^{n}_{+}}M^*_i(x,y,t)\partial_{y_n}a_{n}(y)\,dy\\&=\int_{\mathbb{R}^{n}_{+}}\partial_{y_n}M^*_i(x,y,t)a_{n}(y)\,dy,\end{aligned}.\end{equation}
where the property $\nabla\cdot \boldsymbol{a}=0$ has been used in the fourth equality and $a_n(y)|_{\partial\mathbb{R}^n_+}=0$ is needed in the last equality.

Applying Lemma \ref{A.1} yields that for almost every $y_n>0$,$$\int_{\mathbb{R}^{n-1}}a_n(y',y_n)\,dy'=0.$$
Note that $v_{i}^{(2)}$ can be calculated as follows\begin{equation}\label{3.13}\begin{aligned}v_{i}^{(2)}&=\int_{\mathbb{R}^n_+}\partial_{y_n}M^*_i(x,y,t)a_n(y)\,dy\\&=\int_{\mathbb{R}_+^n}\partial _{y_n}[M^*_i(x',x_n,y',y_n,t)-M^*_i(x',x_n,0',y_n,t)]a_n(y',y_n)\,dy\\&=\int_{\mathbb{R}^{n}_+}(\nabla_{y'}'\partial _{y_n}M^*_i(x',x_n,0',y_n,t)\cdot y' )a_n(y',y_n)\,dy'\,dy_n\\&\quad+\int_{\mathbb{R}^n_+}\partial_{y_n}\left(\int_0^1y'\nabla^2_{\theta y'}M^*_i(x,\theta y',y_n,t)y'^T(1-\theta)\,d\theta\right) a_n(y',y_n)\,dy'\,dy_n\\&=-\int_{\mathbb{R}^{n}_+}(\nabla_{x'}'\partial _{y_n}M^*_i(x',x_n,0',y_n,t)\cdot y' )a_n(y',y_n)\,dy'\,dy_n\\&\quad+\int_{\mathbb{R}^n_+}\partial_{y_n}\left(\int_0^1y'\nabla^2_{x'}M^*_i(x,\theta y',y_n,t)y'^T(1-\theta)\,d\theta\right) a_n(y',y_n)\,dy'\,dy_n,\end{aligned}\end{equation}
where we have used $\int_{\mathbb{R}^{n-1}}a_n(y',y_n)\,dy'=0$ for almost every $y_n>0$ in the first equality and $$\partial_{y_j}M^*_i(x,y,t)=-\partial_{x_j}M_i^*(x,y,t)\quad \text{for } j=1,2,\dots,n-1$$ in the last equality.

It follows from Lemma \ref{lem:2.4} that for $x\in\mathbb{R}^n_+$ and $t>0,$ one has
\begin{equation}\begin{aligned}\label{3.14}&\int_{\mathbb{R}^{n}_{+}}(\nabla_{x'}'\partial _{y_n}M^*_i(x',x_n,0',y_n,t)\cdot y' )a_n(y',y_n)\,dy'\,dy_n\\
= &\int_{\mathbb{R}^{n}_{+}}(\nabla_{x'}'\partial _{x_n}M^*_i(x',x_n,0',y_n,t)\cdot y' )a_n(y',y_n)\,dy'\,dy_n\\
&+\sum_{j=1}^{n-1}4\partial_{x_j}\int_0^\infty\partial_{x_i}[G_{t+\tau}^{(n-1)}(x)G_\tau^{(1)}(x_n)]
\,d\tau\int_{\mathbb{R}^n_+}G_t^{(1)}(y_n)y_j a_n(y',y_n)\,dy'\,dy_n.\end{aligned}\end{equation}
 Recall the pointwise estimate (\ref{2.9}) of $M_{ij}(x,y,t)$, the first term on the right-hand side in (\ref{3.14}) can be estimated as follows
\begin{equation}\label{yuxiang1}\begin{aligned}&\left\Vert\int_{\mathbb{R}^{n}_{+}}(\nabla_{x'}'\partial _{x_n}M^*_i(x',x_n,0',y_n,t)\cdot y' )a_n(y',y_n)\,dy'\,dy_n\right\Vert_{L^1(\mathbb{R}^n_+)}\\\leq &C\int_{\mathbb{R}^{n}_{+}}\left(\int_{\mathbb{R}^n_+}(x_n+\sqrt{t})^{-1}(|x'| + x_n + y_n + \sqrt t )^{-n}\,dx\right)|y'||a_n(y)|\,dy\\\leq &Ct^{-\frac{1}{2}}\int_{\mathbb{R}^n_+}|y'||a_n(y)|\,dy.\end{aligned}\end{equation}
Similarly, one has\begin{equation}\label{yuxiang2}\begin{aligned}&\left\Vert\int_{\mathbb{R}^n_+}\left(\int_0^1y'\nabla^2_{x'}\partial_{y_n}M^*_i(x,\theta y',y_n,t)y'^T(1-\theta)\,d\theta\right) a_n(y',y_n)\,dy'\,dy_n\right\Vert_{L^1(\mathbb{R}^n_+)}\\\leq &C\int_{\mathbb{R}^{n}_{+}}\left(\int_{\mathbb{R}^n_+}t^{-\frac{1}{2}}\left(\int_0^1(|x'-\theta y'| + x_n + y_n + \sqrt{t} )^{-(n+1)}(1-\theta)\,d\theta\right) \,dx\right)|y'|^2|a_n(y)|\,dy\\\leq &Ct^{-1}\int_{\mathbb{R}^n_+}|y'|^2|a_n(y)|\,dy.\end{aligned}\end{equation}Combining (\ref{3.13})--(\ref{yuxiang2}) yields (\ref{linear}).

\textbf{Step 3. The second-order terms from $\boldsymbol{v}^{(1)}(x,t)$.} To prove (\ref{high order}), we first consider $\boldsymbol{v}^{(1)}(x,t)$. The direct calculations give\\\begin{equation}\label{v1}
    \begin{aligned}
        &\boldsymbol{v}^{(1)}(x,t)\\=&\int_{\mathbb{R}^n_+}[G_t(x-y)-G_t(x-y^*)]\boldsymbol{a}(y)\,dy\\=&\int_{\mathbb{R}^n_+}\left(\int_{-1}^1-\partial_{x_n}G_{t}(x'-y',x_n-\theta y_n)\,d\theta\right) y_n \boldsymbol{a}(y) \,dy\\=&-2\partial_{x_n}G_t(x)\int_{\mathbb{R}^n_+}y_n\boldsymbol{a}(y)\,dy-\int_{\mathbb{R}^n_+}[G_t(x'-y')\int_{-1}^{1}[\partial_{x_n}G_t(x_n-\theta y_n)-\partial_{x_n}G_t(x_n)]y_n\boldsymbol{a}(y)\,dy\,d\theta\\&\quad-2\partial_{x_n} G_t(x_n)\int_{\mathbb{R}^n_+}y_n[G_t(x'-y')-G_t(x')]\boldsymbol{a}(y)\,dy\\=& -2\partial_{x_n}G_t(x)\int_{\mathbb{R}^n_+}y_n\boldsymbol{a}(y)\,dy-\int_{-1}^1\int_{\mathbb{R}^n_+}y_nG_t(x'-y')[\partial_{x_n}G_t(x_n-y_n\theta)-\partial_{x_n}G_t(x_n)]\boldsymbol{a}(y)\,dy\,d\theta\\&\quad-2\partial_{x_n} G_t(x_n)\int_{\mathbb{R}^n_+}y_n[G_t(x'-y')-G_t(x')]\boldsymbol{a}(y)\,dy.
    \end{aligned}
\end{equation}\\

Applying the dominated convergence theorem  yields that   for fixed $y_n>0$ and $\theta\in [-1,1]$, one has\begin{equation}\begin{aligned}\label{fix1}
    &\lim_{t\rightarrow+\infty}t^{\frac{1}{2}}\left\|\partial_{x_n}G_t(x_n-\theta y_n)-\partial_{x_n}G_t(x_n)\right\|_{L^1_{x_n}(0,+\infty)}\\=&\lim_{t\rightarrow+\infty}t^{\frac{1}{2}}\left\Vert \partial_{x_n}G_1(\cdot+\theta y_nt^{-\frac{1}{2}})-\partial_{x_n}G_1(\cdot)\right\Vert_{L^1(0,+\infty)}=0
\end{aligned}\end{equation}\\
and  
 \begin{equation}\begin{aligned}\label{fix2}
    &\lim_{t\rightarrow+\infty}\left\|G_t(x'-y')-G_t(x')\right\|_{L^1_{x'}(\mathbb{R}^{n-1})}\\=&\lim_{t\rightarrow+\infty}\left\Vert G_1(\cdot-y't^{-\frac{1}{2}})-G_1(\cdot)\right\Vert_{L^1(\mathbb{R}^{n-1})}=0 \quad\text{ for fixed }  y'\in \mathbb{R}^{n-1}.
\end{aligned}\end{equation}
Then from (\ref{v1})--(\ref{fix2}), one has 
\begin{equation}\label{v^1}
    \begin{aligned}
        &\lim_{t\rightarrow+\infty}t^{\frac{1}{2}}\|\boldsymbol{v}^{(1)}(x,t)+2\partial_{x_n}G_t(x)\int_{\mathbb{R}^n_+}y_n \boldsymbol{a}(y)\,dy\|_{L^1_x(\mathbb{R}^n_+)}\\\leq&\lim_{t\rightarrow+\infty}t^{\frac{1}{2}}\left\|\int_{-1}^1\int_{\mathbb{R}^n_+}G_t(x'-y')[\partial_{x_n}G_t(x_n-y_n\theta)-\partial_{x_n}G_t(x_n)]y_n\boldsymbol{a}(y)\,dy\,d\theta\right\|_{L_x^1(\mathbb{R}^n_+)}\\&+2\lim_{t\rightarrow+\infty}t^{\frac{1}{2}}\left\|\partial_{x_n}G_t(x_n)\int_{\mathbb{R}^n_+}[G_t(x'-y')-G_t(x')]y_n\boldsymbol{a}(y)\,dy\right\|_{L_x^1(\mathbb{R}^n_+)} \\\leq&\lim_{t\rightarrow+\infty}t^{\frac{1}{2}}\int_{-1}^1\int_{\mathbb{R}^n_+}\|G_t(x'-y')\|_{L^1_{x'}(\mathbb{R}^{n-1})}\|\partial_{x_n}G_t(x_n-y_n\theta)-\partial_{x_n}G_t(x_n)\|_{L^1_{x_n}(0,+\infty)}y_n|\boldsymbol{a}(y)|\,dy\,d\theta\\&+2\lim_{t\rightarrow+\infty}t^{\frac{1}{2}}\|\partial_{x_n}G_t(x_n)\|_{L_{x_n}^1(0,+\infty)}\int_{\mathbb{R}^n_+}\|G_t(x'-y')-G_t(x')\|_{L^1_{x'}(\mathbb{R}^{n-1})}y_n|\boldsymbol{a}(y)|\,dy\\\leq&\lim_{t\rightarrow+\infty}\int_{-1}^1\int_{\mathbb{R}^n_+}t^{\frac{1}{2}}\|\partial_{x_n}G_t(x_n-y_n\theta)-\partial_{x_n}G_t(x_n)\|_{L^1_{x_n}(0,+\infty)}y_n|\boldsymbol{a}(y)|\,dy\,d\theta\\&+\lim_{t\rightarrow+\infty}C\int_{\mathbb{R}^n_+}\|G_t(x'-y')-G_t(x')\|_{L^1_{x'}(\mathbb{R}^{n-1})}y_n|\boldsymbol{a}(y)|\,dy\\=&0.
    \end{aligned}
\end{equation}

\textbf{Step 4.  The second-order terms from $\boldsymbol{v}^{(2)}(x,t)$.}  We next  consider the second term on the ride-hand side of \begin{equation}\label{chafen2}\begin{aligned}&\int_{\mathbb{R}^n_+}\partial_{x_n}\nabla_{x'}'M_i^*(x,0',y_n,t)\cdot y'a_n(y)\,dy\\=&\int_{\mathbb{R}^n_+}\partial_{x_n}\nabla_{x'}'M_i^*(x,0,t)\cdot y'a_n(y)\,dy+\int_{\mathbb{R}^n_+}[\partial_{x_n}\nabla_{x'}'M_i^*(x,0',y_n,t)-\partial_{x_n}\nabla_{x'}'M_i^*(x,0,t)]\cdot y'a_n(y)\,dy.\end{aligned}\end{equation}
If $i<n$,
 adapting the argument used in (\ref{2.11}) yields that for $t>0$, one has 
\begin{equation}\label{chafen}
    \begin{aligned}
        &\left\Vert \partial_{x_n}\nabla_{x'}'M_i^*(x,0',y_n,t)-\partial_{x_n}\nabla_{x'}'M_i^*(x,0,t)\right\Vert_{L^1_x(\mathbb{R}^n_+)}\\=&4\left\Vert\int_0^\infty \nabla_{x'}'\partial_{x_i}G_{t+\tau}^{(n-1)}(x')\partial_{x_n}\int_0^{x_n}G_\tau^{(1)}(x_n-z_n)[G_t(z_n+y_n)-G_t(z_n)]\,dz_n\,d\tau\right\Vert_{L^1_x(\mathbb{R}^n_+)}\\\leq
        &C\int_0^\infty(t+\tau)^{-1}\int_0^\infty\bigl|G_\tau^{(1)}(0)[G_t(x_n+y_n)-G_t(x_n)]\\&+\int_0^{x_n}\partial_{x_n}G_{\tau}^{(1)}(x_n-z_n)[G_t(z_n+y_n)-G_t(z_n)]\,dz_n\bigr|\,dx_n\,d\tau\\\leq &C\int_{0}^\infty(t+\tau)^{-1}\tau^{-\frac{1}{2}}\,d\tau\int_0^\infty|G_t(x_n+y_n)-G_t(x_n)|\,dx_n\\&+C\int_0^\infty(t+\tau)^{-1}\left(\int_0^\infty\int_{z_n}^\infty|\partial_{x_n}G_\tau^{(1)}(x_n-z_n)||G_t(z_n+y_n)-G_t(z_n)|\,dx_n\,dz_n\right)\,d\tau\\=&C\int_0^\infty(t+\tau)^{-1}\tau^{-\frac{1}{2}}\,d\tau\int_{0}^\infty |G_t(x_n+y_n)-G_t(x_n)|\,dx_n\\\leq &Ct^{-\frac{1}{2}}\int_{0}^\infty |G_t(x_n+y_n)-G_t(x_n)|\,dx_n.
    \end{aligned}
\end{equation}
With the aid of  the dominated convergence theorem,  for fixed $y_n>0$,  it holds  that \begin{equation}\begin{aligned}\label{3.17}
    \lim_{t\rightarrow+\infty}\int_0^\infty|G_t(x_n+y_n)-G_t(x_n)|\,dx_n=\lim_{t\rightarrow+\infty}\Vert G_1(\cdot+y_nt^{-\frac{1}{2}})-G_1(\cdot)\Vert_{L^1(0,+\infty)}=0.
\end{aligned}\end{equation}

Combining (\ref{chafen}) with (\ref{3.17}) gives
\begin{equation}\label{sl1}
    \begin{aligned}
        &\lim_{t\rightarrow +\infty}t^{\frac{1}{2}}\left\Vert \int_{\mathbb{R}^n_+}[\partial_{x_n}\nabla_{x'}'M_i^*(x,0',y_n,t)-\partial_{x_n}\nabla_{x'}'M_i^*(x,0,t)]\cdot y'a_n(y)\,dy\right\Vert_{L^1_x(\mathbb{R}^n_+)}\\\leq &\lim_{t\rightarrow +\infty}t^{\frac{1}{2}}\int_{\mathbb{R}^n_+}\Vert\partial_{x_n}\nabla_{x'}'M_i^*(x,0',y_n,t)-\partial_{x_n}\nabla_{x'}'M_i^*(x,0,t)\Vert_{L_x^1(\mathbb{R}^n_+)}|y'a_n(y)|\,dy\\\leq &C\lim_{t\rightarrow +\infty}\int_{\mathbb{R}^n_+}\int_0^\infty| G_t^{(1)}(x_n+y_n)-G_t^{(1)}(x_n)|\,dx_n |y'a_n(y)|\,dy\\=&0,
    \end{aligned}
\end{equation}
where the dominated convergence theorem has been used in the last equality.

For $i=n,$  Lemma \ref{lem:2.2} implies
\begin{equation}\label{3.35}\sum_{i=1}^{n}\partial_{x_i}M_i^*(x,y,t)=2G_t(x-y^*).\end{equation} Then using (\ref{3.35}) and  adapting the argument used in (\ref{2.11}) yields that for $t>0$, one has
\begin{equation}
    \begin{aligned}
        &\left\Vert\partial_{x_n}\nabla_{x'}'M_n^*(x,0',y_n,t)-\partial_{x_n}\nabla_{x'}'M_n^*(x,0,t)\right\Vert_{L^1_x(\mathbb{R}^n_+)}\\
        = &4\bigl\Vert\sum_{j=1}^{n-1}\int_0^\infty \nabla_{x'}'\partial_{x_j}^2G_{t+\tau}^{(n-1)}(x')\int_0^{x_n}G_\tau^{(1)}(x_n-z_n)[G_t(z_n+y_n)-G_t(z_n)]\,dz_n\\
        &-\frac{1}{2}\nabla_{x'}'[G_t(x',x_n+y_n)-G_t(x)]\bigr\Vert_{L^1_x(\mathbb{R}^n_+)}\\
       \leq&
        C\int_0^\infty(t+\tau)^{-\frac{3}{2}}\int_0^\infty|\int_0^{x_n}G_{\tau}^{(1)}(x_n-z_n)[G_t(z_n+y_n)-G_t(z_n)]\,dz_n|\,dx_n\,d\tau\\
        &+\Vert 2\nabla_{x'}'[G_t(x',x_n+y_n)-G_t(x)]\Vert_{L^1_x(\mathbb{R}^n_+)}\\
        \leq & C\int_0^\infty(t+\tau)^{-\frac{3}{2}}\int_0^\infty\int_{z_n}^\infty |G_\tau^{(1)}(x_n-z_n)[G_t(z_n+y_n)-G_t(z_n)]|\,dx_n\,dz_n\,d\tau\\
        &+2\int_{\mathbb{R}^{n-1}}|\nabla_{x'}'G_t(x')|\,dx'\int_{0}^\infty |G_t(x_n+y_n)-G_t(x_n)|\,dx_n\\
      \leq & C\int_0^\infty(t+\tau)^{-\frac{3}{2}}\,d\tau\int_{0}^\infty |G_t(z_n+y_n)-G_t(z_n)|\,dz_n\\
      &+Ct^{-\frac{1}{2}}\int_{0}^\infty |G_t(x_n+y_n)-G_t(x_n)|\,dx_n\\
    \leq & Ct^{-\frac{1}{2}}\int_{0}^\infty |G_t(z_n+y_n)-G_t(z_n)|\,dz_n.
    \end{aligned}
\end{equation}
This, together with   (\ref{3.17}), gives
\begin{equation}\label{sl2}
    \begin{aligned}
        &\lim_{t\rightarrow +\infty}\left\Vert t^{\frac{1}{2}}\int_{\mathbb{R}^n_+}[\partial_{x_n}\nabla_{x'}'M_n^*(x,0',y_n,t)-\partial_{x_n}\nabla_{x'}'M_n^*(x,0,t)]\cdot y'a_n(y)\,dy\right\Vert_{L^1_x(\mathbb{R}^n_+)}\\\leq&\lim_{t\rightarrow +\infty}\sum_{j=1}^{n-1}t^{\frac{1}{2}}\int_{\mathbb{R}^n_+}\Vert\partial_{x_n}\nabla_{x'}'M_n^*(x,0',y_n,t)-\partial_{x_n}\nabla_{x'}'M_n^*(x,0,t)\Vert_{L^1_x(\mathbb{R}^n_+)}|y'a_n(y)|\,dy\\\leq &C\lim_{t\rightarrow +\infty}\int_{\mathbb{R}^n_+}\int_0^\infty| G_t^{(1)}(z_n+y_n)-G_t^{(1)}(z_n)|\,dz_n |y'a_n(y)|\,dy\\=&0,
    \end{aligned}
\end{equation}
where the dominated convergence theorem has been used in the last equality.
Combining (\ref{3.13})--(\ref{yuxiang1}), (\ref{chafen2}), (\ref{sl1}) and (\ref{sl2}), yields that for $i=1,2,\dots,n$
\begin{equation}\label{v^2}\begin{aligned}
 \lim_{t\to +\infty}t^{\frac{1}{2}}\biggl\|v_i^{(2)}(x,t)+\sum_{j=1}^{n-1}L_{ij}(x,t)\int_{\mathbb{R}^n_+}G_t^{(1)}(y_n)y_ja_n(y)\,dy\\+\sum_{j=1}^{n-1}\partial_{x_n}\partial_{x_j}M_i^*(x,0,t)\int_{\mathbb{R}^n_+}y_ja_n(y)\,dy\biggr\|_{L^1_x(\mathbb{R}^n_+)}=0.
\end{aligned}\end{equation}
Clearly, (\ref{high order}) is a direct consequence (\ref{v^1}) and (\ref{v^2}).

\textbf{Step 5. Analysis of leading term.} The estimates (\ref{pointwise}) are given by Lemma \ref{lem:L_{ij}}. In the process of proving the necessary and sufficient condition, we  focus on the second term of the right-hand side in (\ref{3.14}) as follows\[\begin{aligned}&\sum_{j=1}^{n-1}4\partial_{x_j}\int_0^\infty\partial_{x_i}[G_{t+\tau}^{(n-1)}(x)G_\tau^{(1)}(x_n)]
\,d\tau\int_{\mathbb{R}^n_+}G_t^{(1)}(y_n)y_j a_n(y',y_n)\,dy'\,dy_n\\=& \sum_{j=1}^{n-1}L_{ij}(x,t)\int_{\mathbb{R}^n_+}G_t^{(1)}(y_n)y_j a_n(y',y_n)\,dy'\,dy_n.\end{aligned}\]

If (\ref{condition}) holds, one has $\boldsymbol{v}(\cdot,t)\in L^1(\mathbb{R}^n_+)$ obviously. On the other hand, assume that (\ref{condition}) fails. 
Suppose
\[
\int_{\mathbb{R}^n_+}G_t^{(1)}(y_n)\,y_i\,a_n(y)\,dy\neq 0
\quad\text{for some }i=1,2,\dots,n-1.
\]
Then  it follows from (\ref{pointwise}) that there exists a $M(t)>0$ large enough and $C(t),\widetilde{C}(t)>0$ such that for $x\in \Lambda_{M(t)}\cap \left(B(0,M(t))\right)^c$
\begin{equation}\label{arg}
\frac{C(t)}{|x|^n}\leq\left|\sum_{j=1}^{n-1}L_{ij}(x,t)\int_{\mathbb{R}^n_+}G_t^{(1)}(y_n)y_ja_n(y)\,dy\right|\leq \frac{\widetilde{C}(t)}{|x|^n}.
\end{equation}
It follows from (\ref{linear}) that $v_i(x,t)$ does not belong to $L^1(\mathbb{R}^n_+)$.
Hence the proof for Theorem \ref{thm:1.1} is completed.\end{proof}
\section{Long time behavior for solutions of the Navier--Stokes system in $L^1(\mathbb{R}^n_+)$ }\label{sec:4}
We now consider the nonlinear problem (\ref{1.1}). The existence of strong solution of (\ref{1.1}) is given by Lemma \ref{lem:2.1}. And if $\mathcal{R}(\cdot,t)\in L^1(\mathbb{R}^n_+)$, by (\ref{cond:decay}) and the same argument as \textbf{Step 5} in the proof of Theorem \ref{thm:1.1}, we have $\boldsymbol{u}(\cdot,t) \in L^1(\mathbb{R}^n_+)$ if and only if $\mathscr{A}_j[\boldsymbol{a},\boldsymbol{u}](t) = 0$ for  each $j = 1,2, \dots, n-1$. Hence, the rest of this section is devoted to the proof of Case (1)  in Theorem \ref{thm:1.2} which concerns long time behavior of the solutions of (\ref{1.1}).

   Let $g=\mathcal{N}f$ and $g=\mathcal{D}f$   denote the solution to the Neumann problem and the Dirichlet problem
   \begin{equation}
    \begin{cases}
        -\Delta g = f          & \text{in } \mathbb{R}^n_+, \\
        |g(x)| \to 0           & \text{as } |x| \to \infty, \\
        \partial_n g = 0       & \text{on } \partial\mathbb{R}^n_+
    \end{cases}\quad \text{and}\quad \begin{cases}
        -\Delta g = f          & \text{in } \mathbb{R}^n_+, \\
        |g(x)| \to 0           & \text{as } |x| \to \infty, \\
         g = 0       & \text{on } \partial\mathbb{R}^n_+,
    \end{cases}
\end{equation}
respectively. Then the straightforward computations show\begin{equation}
        \mathcal{N}f=\int_{\mathbb{R}^n_+}[E(x-y)+E(x-y^*)]f(y)\,dy
    \end{equation}
and\begin{equation}
    \mathcal{D}f=\int_{\mathbb{R}^n_+}[E(x-y)-E(x-y^*)]f(y)\,dy.
\end{equation}    
Moreover, for any $\boldsymbol{u},\boldsymbol{v}\in L^2_\sigma(\mathbb{R}^n_+)\cap H^1_0(\mathbb{R}^n_+)$, one has (see \cite{2012H})
\begin{equation}\label{4.44}
\mathbb{P}(\boldsymbol{u}\cdot\nabla \boldsymbol{v})=\boldsymbol{u}\cdot\nabla \boldsymbol{v}+\sum_{i,j=1}^{n}\nabla\mathcal{N}\partial_i\partial_j(u_iv_j).
\end{equation}
    It follows from Duhamel formula$$\boldsymbol{u}(x,t)=e^{-t\mathbb{A}}\boldsymbol{a}-\int_0^t e^{-(t-s)\mathbb{A}}\mathbb{P}\left(\boldsymbol{u}\cdot\nabla \boldsymbol{u}(s)\right)\,ds,$$
    that one can select the part not in $L^1(\mathbb{R}^n_+)$ from the second term in the same way of Theorem \ref{1.1}. In this case, $a_n$ is replaced by $\left(\mathbb{P}(\boldsymbol{u}\cdot \nabla \boldsymbol{u})\right)_n$ so the coefficients of
$L^1(\mathbb{R}^n_+)$  characterization can be represented by \begin{equation}\label{4.16}\int_0^t\int_{\mathbb{R}^n_+}G_{t-s}^{(1)}(y_n)y_k\left(\mathbb{P}(\boldsymbol{u}\cdot\nabla \boldsymbol{u}(y,s))\right)_n\,dy\,ds=\int_0^t\int_{\mathbb{R}^n_+}G_{t-s}^{(1)}(y_n)u_nu_k(y,s)\,dy\,ds.\end{equation} More precisely, for $k<n$, it follows from (\ref{4.44}) that one has
    \begin{equation}
        \begin{aligned}
            &\int_0^t\int_{\mathbb{R}^n_+}G_{t-s}^{(1)}(y_n)y_k\left(\mathbb{P}(\boldsymbol{u}\cdot\nabla \boldsymbol{u}(y,s))\right)_n\,dy\,ds\\=&\int_0^t\int_{\mathbb{R}^n_+}G_{t-s}^{(1)}(y_n)y_k\left(\boldsymbol{u}\cdot\nabla u_n+\sum_{i,j=1}^n\partial_{y_n}\mathcal{N}\partial_i\partial_j(u_iu_j)\right)\,dy\,ds.
        \end{aligned}
    \end{equation} Using $\nabla\cdot \boldsymbol{u}=0$ yields
    \begin{equation}\label{4.18}
        \begin{aligned}
           &\int_{\mathbb{R}^{n-1}}y_k\boldsymbol{u}\cdot\nabla u_n\,dy'\\=&-\int_{\mathbb{R}^{n-1}}y_k\sum_{j\neq k,n}(\partial_{j}u_j)u_n\,dy'+\int_{\mathbb{R}^n_+}y_ku_k\partial_{k}u_n\,dy'+\int_{\mathbb{R}^{n-1}}y_ku_n\partial_{n}u_n\,dy'\\=&-\int_{\mathbb{R}^n_+}u_ku_n\,dy'+\int_{\mathbb{R}^{n-1}}y_k\partial_{y_n}(u_nu_n)\,dy'.
        \end{aligned}
    \end{equation}
    For $i,j<n$; $i=n,j\neq k,n$ or $j=n, i\neq k,n$, one has
    \begin{equation}\label{4.19}
        \int_{\mathbb{R}^{n-1}}y_k\partial_{y_n}\mathcal{N}\partial_i\partial_j(u_iu_j)\,dy'=0.
    \end{equation}
    For $i=k,j=n$ or $i=n,j=k,$ it holds that
    \begin{equation}\label{4.20}
        \begin{aligned}
           \int_{\mathbb{R}^{n-1}}y_k\partial_{y_n}\mathcal{N}\partial_k\partial_n(u_ku_n)\,dy'
&=\int_{\mathbb{R}^{n-1}}y_k\partial_{y_n}^2\mathcal{D}\partial_k(u_ku_n)\,dy'\\&=-\int_{\mathbb{R}^{n-1}}y_k\sum_{j=1}^{n-1}\partial_{y_j}^2\mathcal{D}\partial_k(u_ku_n)\,dy'-\int_{\mathbb{R}^{n-1}}y_k\partial_k(u_ku_n)\,dy'\\&=\int_{\mathbb{R}^{n-1}}u_nu_k\,dy'.        \end{aligned}
    \end{equation}
    For $i=j=n,$ one has
    \begin{equation}\label{4.21}
        \begin{aligned}
            \int_{\mathbb{R}^{n-1}}y_k\partial_{y_n}^2\mathcal{N}\partial_n(u_nu_n)\,dy'&=-\int_{\mathbb{R}^{n-1}}y_k\sum_{j=1}^{n-1}\partial_{y_j}^2\mathcal{N}\partial_n(u_nu_n)\,dy'-\int_{\mathbb{R}^{n-1}}y_k\partial_{y_n}(u_nu_n)\,dy'\\&=-\int_{\mathbb{R}^{n-1}}y_k\partial_{y_n}(u_nu_n)\,dy'.
        \end{aligned}
    \end{equation}
    Combining (\ref{4.18})--(\ref{4.21}) gives (\ref{4.16}).
    But we cannot derive the same estimates of remainder terms directly  since $|y'|\mathbb{P}(\boldsymbol{u}\cdot\nabla \boldsymbol{u})$ might not be in $L^1(\mathbb{R}^n_+)$.

    The following lemma can be regarded as an improvement of \cite[Theorem 1.4]{2018H}, in the sense that the decay rate is improved to $-\frac{1}{2}$. 
    \begin{Lemma}\label{lem:4.1}
        Suppose the initial data $\boldsymbol{a}\in L^2_\sigma(\mathbb{R}^n_+) \cap L^1(\mathbb{R}^n_+) \cap L^n(\mathbb{R}^n_+)$  satisfies the same condition in Case (1) of Theorem \ref{thm:1.2}. Then the strong solution $\boldsymbol{u}$ of (\ref{1.1}) given by Lemma \ref{lem:2.1} satisfies, for any $t>0$
        
        \begin{equation}\label{profile}\begin{aligned}
            \left\| \boldsymbol{u}-e^{-t\mathbb{A}}\boldsymbol{a}-4\nabla_{x}\sum_{j=1}^{n-1}\partial_{x_j}\int_0^\infty G_{t+\tau}^{(n-1)}(x')G_\tau^{(1)}(x_n)\,d\tau\int_0^t\int_{\mathbb{R}^n_+}G_{t-s}^{(1)}(y_n)u_nu_j(y,s)\,dy\,ds\right\|_{L^1(\mathbb{R}^n_+)}\leq C t^{-\frac{1}{2}}.\end{aligned}\end{equation}\end{Lemma}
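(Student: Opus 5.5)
We start from the Duhamel formula (\ref{duhamel}) written in divergence form, $\boldsymbol{u}-e^{-t\mathbb{A}}\boldsymbol{a}=-\int_0^t e^{-(t-s)\mathbb{A}}\mathbb{P}\nabla\cdot(\boldsymbol{u}\otimes\boldsymbol{u})\,ds$. The Duhamel integral is expanded with the Green tensor representation (\ref{4.40}), taking $F_{lk}=u_lu_k$; note that $F_{nk}|_{y_n=0}=0$ by the boundary condition. This representation is used instead of (\ref{4.44}), so the Leray projection never has to be split into $\boldsymbol{u}\cdot\nabla\boldsymbol{u}$ plus a nonlocal gradient term. That splitting is what forced the loss $t^{-\alpha/2}$ in \cite{2018H}. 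Each term of (\ref{4.40}) is a kernel of the form $\partial_y\mathcal{G}(x,y,t-s)$ or $\partial_x^2K^\pm(x,y,t-s)$ applied to a quadratic quantity, and $\|u_lu_k(s)\|_{L^1}\le\|\boldsymbol{u}(s)\|_{L^2}^2$ is controlled by Lemma \ref{lem:decay}.

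\textbf{Step 1: isolate the non-$L^1$ part.} In every term where a kernel $M_{ij}$ enters, i.e. the $M$-part of $\partial_{y_l}\mathcal{G}_{ij}$ and the $M$-part inside $K^\pm$, we repeat the argument of Step 2 in the proof of Theorem \ref{thm:1.1}. We write $M$ in terms of $M^*$, Taylor expand in $y'$ around $0'$, and apply Lemma \ref{lem:2.4} to trade $\partial_{y_n}$ for $\partial_{x_n}$. Cancellation of the zeroth moment in $y'$ comes from Lemma \ref{A.1} in the linear case. Here it is replaced by the divergence structure: the $y_l$-derivative can be moved onto the data, and the horizontal integrals $\int_{\mathbb{R}^{n-1}}$ of the resulting quantities vanish or reduce to $u_nu_k$. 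This is the computation (\ref{4.18})--(\ref{4.21}), which shows the coefficient produced is exactly $\int_0^t\int G^{(1)}_{t-s}(y_n)u_nu_j\,dy\,ds$. Collecting the boundary terms coming from Lemma \ref{lem:2.4} gives exactly $4\nabla_x\sum_{j<n}\partial_{x_j}\int_0^\infty G^{(n-1)}_{t+\tau}G^{(1)}_\tau\,d\tau$ times this coefficient, as in (\ref{profile}).

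\textbf{Step 2: bound the remainders.} The heat-kernel parts $G_{t-s}(x-y)-G_{t-s}(x-y^*)$, after one $y$-derivative, have $L^1_x$ norm $\le C(t-s)^{-1/2}$. By (\ref{2.9}), the $M$-remainders (with $\partial_{x_n}\nabla_{x'}M^*$ and the second order Taylor term) have $L^1_x$ norm $\le C(t-s)^{-1/2}$ times $|y'|$-moments or $y_n$-moments of $u\otimes u$, exactly as in (\ref{yuxiang1})--(\ref{yuxiang2}). The $K^\pm$ terms are handled with the Crispo--Maremonti estimate: two tangential $x$-derivatives give kernel decay $(|x-y|+\sqrt{t-s})^{-(n+1)}$, integrable with $L^1_x$ norm $C(t-s)^{-1/2}$. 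The $s$-integral is then split into $(0,t/2)$ and $(t/2,t)$. On $(0,t/2)$ we use $(t-s)^{-1/2}\lesssim t^{-1/2}$ together with $\int_0^\infty\|\boldsymbol{u}\|_{L^2}^2\,ds<\infty$ (from (\ref{one}) for $n\ge3$ and from (\ref{two}) when $n=2$). On $(t/2,t)$ the fast decay $(1+s)^{-n/2}$ or $(1+s)^{-(n+2)/2}$ absorbs $\int_{t/2}^t(t-s)^{-1/2}ds\sim t^{1/2}$. Short times are covered by the same bounds, giving $Ct^{1/2}$ at worst, and this is consistent with $Ct^{-1/2}$ for bounded $t$ only after checking small $t$ separately. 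Since $\boldsymbol{u}$ is bounded in $L^2$ and $L^1$ near $0$, I expect the integral to be $O(t^{1/2})\le C$, which is fine once combined with large-time behaviour.

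\textbf{Main obstacle.} The hard step is the $|y'|$-weighted quantities $\int_0^t\int |y'|\,|\boldsymbol{u}|^2\,dy\,ds$ and the second moments $|y'|^2|u\otimes u|$ that appear in the Taylor remainder. We bound them by $\||y|^\gamma\boldsymbol{u}\|_{L^2}\|\boldsymbol{u}\|_{L^2}$ with $\gamma<1$ from (\ref{three}), combined with interpolation. Since only $\gamma<1$ is available, the first-order Taylor expansion is not used for these; instead we use the mixed bound $|\nabla_{x'}M(x,\theta y')-\nabla_{x'}M(x,0)|\lesssim \min(1,|y'|/\sqrt{t-s})^{\gamma}$. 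This yields $L^1_x$ norms $(t-s)^{-(1+\gamma)/2}|y'|^{1+\gamma}\cdots$. I would first try $\gamma$ close to $1$ and balance the split in $s$ so that the total exponent is exactly $-1/2$; this is precisely where the previous $t^{-\alpha/2}$ loss has to be removed.
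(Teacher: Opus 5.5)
Your framework matches the paper's. You use Duhamel in divergence form, the representation (\ref{4.40}) with $F_{lk}=u_lu_k$, direct $L^1_x$ bounds of order $(t-s)^{-1/2}$ for the heat-kernel parts, the tangentially differentiated $M$-parts and all $K^\pm$ terms, and a split of the $s$-integral at $t/2$. The gap is in Step~1, which is exactly where the profile has to be produced. In (\ref{4.40}) the only term not directly of order $(t-s)^{-1/2}$ is the $l=n$, $M$-part of the first sum, $\sum_{j<n}\int_0^t\int_{\mathbb{R}^n_+}\partial_{y_n}M_{ij}(x,y,t-s)\,u_nu_j\,dy\,ds$. Nothing has to be extracted from $K^\pm$ or from $\mathcal{G}_{in}$, since $M_{in}=0$. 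For this term there is no zero-mean cancellation, and none is needed. $M_{ij}=\partial_{x_j}M_i^*$ already carries the tangential derivative that the linear proof had to manufacture by a first-order Taylor expansion, and the coefficient is the zeroth horizontal moment of $u_nu_j$. One simply splits $\partial_{y_n}M_{ij}(x,y,t-s)=\partial_{y_n}[M_{ij}(x,y,t-s)-M_{ij}(x,0',y_n,t-s)]+\partial_{y_n}M_{ij}(x,0',y_n,t-s)$ and applies Lemma~\ref{lem:2.4} to the last piece. Invoking (\ref{4.18})--(\ref{4.21}) brings back the (\ref{4.44}) route. Its data $(\mathbb{P}(\boldsymbol{u}\cdot\nabla\boldsymbol{u}))_n$ need not have finite $|y'|$-moments, which is precisely the obstruction noted just before the lemma. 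A genuine first-order Taylor expansion of $\partial_{y_n}M_{ij}$ in $y'$ fails anyway: (\ref{2.9}) only gives $\|\nabla_{y'}\partial_{y_n}M_{ij}(\cdot,0',y_n,t-s)\|_{L^1_x}\le C(t-s)^{-1}$, which is not integrable at $s=t$.

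The two estimates that actually close the argument are absent.

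First, for the difference above you interpolate, by H\"older with exponent $\alpha\in(0,1)$, between the mean-value bound and the crude bound on the two terms separately. This gives $\|\partial_{y_n}[M_{ij}(\cdot,y,t-s)-M_{ij}(\cdot,0',y_n,t-s)]\|_{L^1_x}\le C|y'|^{\alpha}(t-s)^{-\frac{1+\alpha}{2}}$. Combined with $\int|y'|^{\alpha}|\boldsymbol{u}|^2\,dy\le\||y|^{\alpha/2}\boldsymbol{u}\|_{L^2}^2\le C(1+s)^{-\frac{n-\alpha}{2}}$ from (\ref{three}), this yields $Ct^{-\frac12}(1+t^{-\frac{\alpha}{2}})$, and the crude bound $Ct^{\frac{1-\alpha}{2}}$ covers $t\le1$. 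Your ``main obstacle'' paragraph interpolates a different object, namely $|y'|^{1+\gamma}$-moments of a second-order remainder, and leaves the exponent count as something still to be tried.

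Second, Lemma~\ref{lem:2.4} at time $t-s$ produces the time convolution $\int_0^t L_{ij}(x,t-s)\int_{\mathbb{R}^n_+}G^{(1)}_{t-s}(y_n)u_nu_j\,dy\,ds$, not $L_{ij}(x,t)$ times the coefficient. So the boundary terms do not give ``exactly'' the profile in (\ref{profile}). Since $L_{ij}(\cdot,t)\notin L^1$, the $L^1$-smallness of the difference is not automatic. You must prove $\|\partial_\tau L_{ij}(\cdot,\tau)\|_{L^1}\le C\tau^{-1}$ and deduce $\|L_{ij}(\cdot,t-s)-L_{ij}(\cdot,t)\|_{L^1}\le C\log\frac{t}{t-s}$. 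Then check that $\int_0^t\log\frac{t}{t-s}\,(t-s)^{-\frac12}(1+s)^{-\frac{n+2}{2}}\,ds\le Ct^{-\frac12}$. The remaining piece $\partial_{x_n}M_{ij}(x,0',y_n,t-s)u_nu_j$ from Lemma~\ref{lem:2.4} is then harmless by (\ref{2.9}).
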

            \begin{Remark}
              In the proof of  Lemma \ref{lem:4.1}, it can be observed that for the characterization extraction of $L^1$, a key point is to obtain the weighted $L^2$ estimate of the solution $\||x'|^{\frac{\alpha}{2}}\boldsymbol{u}\|_{L^2(\mathbb{R}^n_+)}$ for some $\alpha\in (0,1)$. However, due to the lack of information about the pressure near the boundary, unlike  the solution of the Cauchy problem \cite{2001HX}, the  presence of boundary requires additional regularity on the initial data to obtain the weighted energy estimates for weak solutions in the half space (cf.\cite{2009HW}).  
            \end{Remark}
       \begin{proof}
           The proof is carried out in three steps.

   \textbf{Step 1. Analysis of nonlinear part.} Note that $$\begin{aligned}\boldsymbol{u}(x,t)&=e^{-t\mathbb{A}}\boldsymbol{a}-\int_0^t e^{-(t-s)\mathbb{A}}\mathbb{P}(\boldsymbol{u}\cdot\nabla \boldsymbol{u}(s))\,ds\\&=e^{-t\mathbb{A}}\boldsymbol{a}-\int_0^t e^{-(t-s)\mathbb{A}}\mathbb{P}\nabla\cdot (\boldsymbol{u}\otimes \boldsymbol{u})\,ds.\end{aligned}$$
   
  Let $\mathscr{F}=\boldsymbol{u}\otimes \boldsymbol{u}$ in (\ref{4.40}). For $i=1,2,\dots,n$, $x\in \mathbb{R}^n_+$ and $t>0,$ one has
   \begin{equation}\label{4.45}
    u_i(x,t)-[e^{-t\mathbb{A}}\boldsymbol{a}]_i
    := -\sum_{j=1}^{9} I_j(x,t),
\end{equation}where\begin{equation}
    I_1 =
    -\sum_{j\neq n}
    \sum_{l=1}^{n}
    \int_0^t \int_{\mathbb{R}^n_+}
    \partial_{y_l}\mathcal{G}_{ij}(x,y,t-s)\,
    u_l \, u_j(y,s) \, dy \, ds
\end{equation}and
   \begin{equation}\small
        \begin{aligned}
            &\sum_{j=2}^{9} I_j=\sum_{j\neq n}\int_0^t\int_{\mathbb{R}^n_+}\partial_{y_j}\mathcal{G}_{ij}(x,y,t-s)u_nu_n(y,s)\,dy\,ds+\sum_{l\neq n}\int_0^t\int_{\mathbb{R}^n_+}\partial_{y_l}\mathcal{G}_{in}(x,y,t-s)u_nu_l(y,s)\,dy\,ds\\&-\sum_{k,j,l\neq n}\int_0^t\int_{\mathbb{R}^n_+}\partial_{x_j}\partial_{x_l}K_{ijk}^+(x,y,t-s)u_lu_k(y,s)\,dy\,ds-\sum_{j,l\neq n}\int_{\mathbb{R}^n_+} \partial_{x_j}\partial_{x_l}K_{ijn}^-(x,y,t-s)(u_lu_n+u_nu_l)(y,s)\,dy\\&+\sum_{j,m\neq n}\int_0^t\int_{\mathbb{R}^n_+} \partial^2_{x_m}K_{ijj}^+(x,y,t-s)u_nu_n(y,s)\,dy\,ds-\sum_{k,l\neq n}\int_0^t\int_{\mathbb{R}^n_+} \partial_{x_k}\partial_{x_l}K_{inn}^-(x,y,t-s)u_lu_k(y,s)\,dy\,ds\\&+
    \sum_{l,m\neq n}\int_0^t\int_{\mathbb{R}^n_+} \partial^2_{x_m}K_{inl}^+(x,y,t-s)(u_lu_n+u_nu_l)(y,s)\,dy\,ds+\sum_{m\neq n}\int_0^t\int_{\mathbb{R}^n_+}\partial_{x_m}^2K^-_{inn}(x,y,t-s)u_nu_n(y,s)\,dy\,ds.
        \end{aligned}
        \end{equation}

     \textbf{Step 2. Estimates for $I_2$--$I_9$}. Note that $M_{in}(x,y,t)=0$ for $i=1,2,\dots,n,$ then
        \begin{equation}\begin{aligned}
           & \sum_{l\neq n}\int_0^t\int_{\mathbb{R}^n_+}\partial_{y_l}\mathcal{G}_{in}(x,y,t-s)u_lu_n(y,s)\,dy\,ds\\=&\sum_{l\neq n}\int_0^t\int_{\mathbb{R}^n_+}\partial_{y_l}[G_{t-s}(x-y)-G_{t-s}(x-y^*)]u_lu_n(y,s)\,dy\,ds.
        \end{aligned}\end{equation}
       Note that  for $|k|=0,1,2,\dots$\begin{equation}\label{heat}
    |\partial_{x}^kG_t(x)|\leq C\frac{t^{\frac{\theta-n-|k|}{2}}}{(|x|+\sqrt{t})^\theta} \quad \text{for}\ \theta\geq 0.
\end{equation} It follows from (\ref{2.9}), (\ref{6.47}), (\ref{heat}) and (\ref{two}) that, for  $t>0$, one has\begin{equation}\label{qiuhe}
    \begin{aligned}
        \left\|\sum_{j=2}^9I_j\right\|_{L^1(\mathbb{R}^n_+)}&\leq C\int_{0}^{t}\int_{\mathbb{R}^{n}_{+}}\|(|x-y|+\sqrt{t-s})^{-(n+1)}\|_{L_x^1(\mathbb{R}^n_+)}|\boldsymbol{u}(y,s)|^2\,dy\,ds\\& \leq C\int_{0}^{t}\|(|x-y|+\sqrt{t-s})^{-(n+1)}\|_{L_x^1(\mathbb{R}^n_+)}\|\boldsymbol{u}\|^2_{L^2(\mathbb{R}^n_+)}\,ds\\&\leq C\int_0^t(t-s)^{-\frac{1}{2}}(1+s)^{-\frac{n+2}{2}}\,ds\\&\leq Ct^{-\frac{1}{2}}\int_0^\frac{t}{2}(1+s)^{-\frac{n+2}{2}}\,ds+C\int_\frac{t}{2}^t(t-s)^{-\frac{1}{2}}(1+s)^{-\frac{n+2}{2}}\,ds\\&\leq Ct^{-\frac{1}{2}}.
    \end{aligned}
\end{equation}

\textbf{Step 3. Estimates for $I_1$}. Note that\begin{equation}\label{I1}
    \begin{aligned}
        I_1&=-\sum_{l=1}^{n}\sum_{j=1}^{n-1}\delta_{ij}\int_0^t\int_{\mathbb{R}^n_+}\partial_{y_l}[G_{t-s}(x-y)-G_{t-s}(x-y^*)]u_lu_j(y,s)\,dy\,ds\\&\quad-\sum_{l=1}^{n-1}\sum_{j=1}^{n-1}\int_0^t\int_{\mathbb{R}^n_+}\partial_{y_l}M_{ij}(x,y,t-s)u_lu_j(y,s)\,dy\,ds\\&\quad-\sum_{j=1}^{n-1}\int_0^t\int_{\mathbb{R}^n_+}\partial_{y_n}[M_{ij}(x,y,t-s)-M_{ij}(x,0',y_n,t-s)]u_nu_j(y,s)\,dy\,ds\\&\quad-\sum_{j=1}^{n-1}\int_0^t\int_{\mathbb{R}^n_+}\partial_{y_n}M_{ij}(x,0',y_n,t-s)u_nu_j(y,s)\,dy\,ds\\&=I_{11}+I_{12}+I_{13}+I_{14}.
    \end{aligned}
\end{equation}
It follows from (\ref{2.9}),
 (\ref{heat}), (\ref{2.9}) and (\ref{two}) that for  $t>0$, we have 
\begin{equation}\label{4.60}
    \begin{aligned}
        \|I_{11}+I_{12}\|_{L^1(\mathbb{R}^n_+)}&\leq C\int_{0}^{t}\int_{\mathbb{R}^{n}_{+}}\|(|x-y|+\sqrt{t-s})^{-(n+1)}\|_{L^1_x(\mathbb{R}^n_+)}|\boldsymbol{u}(y,s)|^2\,dy\,ds\\& \leq C\int_{0}^{t}\|(|x-y|+\sqrt{t-s})^{-(n+1)}\|_{L_x^1(\mathbb{R}^n_+)}\|\boldsymbol{u}\|^2_{L^2(\mathbb{R}^n_+)}\,ds\\&\leq C\int_0^t(t-s)^{-\frac{1}{2}}(1+s)^{-\frac{n+2}{2}}\,ds\\&\leq Ct^{-\frac{1}{2}}\int_0^\frac{t}{2}(1+s)^{-\frac{n+2}{2}}\,ds+C\int_\frac{t}{2}^t(t-s)^{-\frac{1}{2}}(1+s)^{-\frac{n+2}{2}}\,ds\\&\leq Ct^{-\frac{1}{2}}.
    \end{aligned}
\end{equation}
Using Hölder inequality yields for $y\in\mathbb{R}^n_+$, $t>s>0$ and $\alpha\in (0,1)$\begin{equation}
    \begin{aligned}
        &\left\|\partial_{y_n}\left(M_{ij}(x,y,t-s)-M_{ij}(x,0',y_n,t-s)\right)\right\|_{L^1_{x}(\mathbb{R}^n_+)}\\=&\int_{\mathbb{R}^n_+}\left|\partial_{y_n}\int_0^1y'\cdot\nabla_{\theta y'}' M_{ij}(x,\theta y',y_n,t-s)\,d\theta\right|^{\alpha}\\&\quad\times\left|\partial_{y_n}M_{ij}(x,y,t-s)-\partial_{y_n}M_{ij}(x,0',y_n,t-s)\right|^{1-\alpha}\,dx\\\leq&
       |y'|^\alpha \int_0^\infty\int_{\mathbb{R}^{n-1}}\left(\int_0^1(t-s)^{-\frac{1}{2}}(|x'-\theta y'|+x_n+y_n+\sqrt{t-s})^{-n-1}\,d\theta\right)^\alpha\\&\quad\times\left((t-s)^{-\frac{1}{2}}\left((|x'- y'|+x_n+y_n+\sqrt{t-s})^{-n}+(|x'|+x_n+y_n+\sqrt{t-s})^{-n}\right)\right)^{1-\alpha}\,dx'\,dx_n\\\leq &C|y'|^\alpha(t-s)^{-\frac{1}{2}}\int_0^\infty\left(\int_{\mathbb{R}^{n-1}}(|x'-\theta y'|+x_n+y_n+\sqrt{t-s})^{-n-1}\,dx\,d\theta\right)^\alpha\\&\quad\times\left(\int_{\mathbb{R}^{n-1}}\left((|x'- y'|+x_n+y_n+\sqrt{t-s})^{-n}+(|x'|+x_n+y_n+\sqrt{t-s})^{-n}\right)\,dx'\right)^{1-\alpha}\,dx_n\\\leq&C|y'|^\alpha(t-s)^{-\frac{1}{2}}\int_0^\infty(x_n+y_n+\sqrt{t-s})^{-1-\alpha}\,dx_n\\\leq& C|y'|^\alpha(t-s)^{-\frac{1+\alpha}{2}}.
    \end{aligned}
\end{equation}
This, together with (\ref{three}) implies that for $\alpha\in (0,1)$ and $t>0$, one has 
\begin{equation}\label{I13}
    \begin{aligned}
        &\|I_{13}\|_{L^1(\mathbb{R}^n_+)}\\=&\sum_{j=1}^{n-1}\int_0^t\int_{\mathbb{R}^n_+}\left\|\partial_{y_n}[M_{ij}(x,y,t-s)-M_{ij}(x,0',y_n,t-s)]\right\|_{L^1_x(\mathbb{R}^n_+)}(u_nu_j)(y,s)\,dy\,ds\\\leq& C\int_0^t(t-s)^{-\frac{1+\alpha}{2}}\||y'|^{\frac{\alpha}{2}}\boldsymbol{u}\|^2_{L^2(\mathbb{R}^n_+)}\,ds\\\leq& C\int_0^t(t-s)^{-\frac{1+\alpha}{2}}(1+s)^{-\frac{n-\alpha}{2}}\,ds\\\leq& Ct^{-\frac{1+\alpha}{2}}\int_0^\frac{t}{2}(1+s)^{-\frac{n-\alpha}{2}}\,ds+C\int_\frac{t}{2}^t(t-s)^{-\frac{1+\alpha}{2}}(1+s)^{-\frac{n-\alpha}{2}}\,ds\\\leq& 
        C t^{-\frac{1}{2}}(1+t^{-\frac{\alpha}{2}}).\end{aligned}
\end{equation}Additionally, from the fourth line of (\ref{I13}) we obtain
\[
\|I_{13}\|_{L^1(\mathbb{R}^n_+)}\leq C\int_0^t(t-s)^{-\frac{1+\alpha}{2}}(1+s)^{-\frac{n-\alpha}{2}}\,ds \leq C t^{\frac{1-\alpha}{2}}\quad\text{for }\alpha\in (0,1)\text{ and }t>0.
\]
Combining this with (\ref{I13}) yields
\begin{equation}\label{i13}
\|I_{13}\|_{L^1(\mathbb{R}^n_+)} \leq C t^{-\frac{1}{2}} \quad\text{for } t>0.
\end{equation}

Applying Lemma \ref{lem:2.4} gives
\begin{equation}\label{I14}
    \begin{aligned}
        I_{14}&=-\sum_{j=1}^{n-1}\int_0^t\int_{\mathbb{R}^n_+}\partial_{x_n}M_{ij}(x,0',y_n,t-s)u_nu_j(y,s)\,dy\,ds\\&\quad-\sum_{j=1}^{n-1}\int_0^t\int_{\mathbb{R}^n_+}[L_{ij}(x,t-s)-L_{ij}(x,t)]G^{(1)}_{t-s}(y_n)u_nu_j(y,s)\,dy\,ds\\&\quad-\sum_{j=1}^{n-1}L_{ij}(x,t)\int_0^t\int_{\mathbb{R}^n_+}G^{(1)}_{t-s}(y_n)u_nu_j(y,s)\,dy\,ds\\&=I_{141}+I_{142}+I_{143}.
    \end{aligned}
\end{equation}
Using (\ref{2.9}) and (\ref{two}) yields that for $t>0$ \begin{equation}\label{i141}\begin{aligned}
     &\|I_{141}\|_{L^1(\mathbb{R}^n_+)}\\\leq& C\int_0^t\int_{\mathbb{R}^n_+}\left\|\partial_{x_n}M_{ij}(x,0',y_n,t-s)\right\|_{L^1_x(\mathbb{R}^n_+)}|u_nu_j(y,s)|\,dy\,ds\\\leq& C\int_0^t\left\|(|x'|+x_n+y_n+\sqrt{t-s})^{-n}(x_n+\sqrt{t-s})^{-1}\right\|_{L^1_x(\mathbb{R}^n_+)}\|\boldsymbol{u}\|^2_{L^2(\mathbb{R}^n_+)}\,ds\\\leq& C\int_0^t(t-s)^{-\frac{1}{2}}(1+s)^{-\frac{n+2}{2}}\,ds\\\leq& Ct^{-\frac{1}{2}}\int_0^\frac{t}{2}(1+s)^{-\frac{n+2}{2}}\,ds+C\int_\frac{t}{2}^t(t-s)^{-\frac{1}{2}}(1+s)^{-\frac{n+2}{2}}\,ds \\\leq& Ct^{-\frac{1}{2}}.
 \end{aligned}\end{equation} Recall the definition (\ref{L_{ij}}) of $L_{ij}(x,t)$, for  $j<n$ and $t>0$, we have
 \begin{equation}
 \begin{aligned}
     &\| \partial_tL_{ij}(x,t)\|_{L^1(\mathbb{R}^n_+)}\\\leq& \begin{cases}
        4\int_0^\infty \|\partial_t\partial_{x_i}\partial_{x_j} G_{t+\tau}^{(n-1)}(x')\|_{L^1_{x'}(\mathbb{R}^{n-1})}\|G_{\tau}^{(1)}(x_n)\|_{L^1_{x_n}((0,\infty))}\,d\tau & \text{if } i<n, \\
        4\int_0^\infty \|\partial_t\partial_{x_j} G_{t+\tau}^{(n-1)}(x')\|_{L^1_{x'}(\mathbb{R}^{n-1})}\|\partial_{x_n}G_{\tau}^{(1)}(x_n)\|_{L^1_{x_n}((0,\infty))}\,d\tau                  & \text{if } i=n,
    \end{cases}\\\leq& \begin{cases}
       C\int_0^\infty(t+\tau)^{-2}\,d\tau & \text{if } i<n, \\
        C\int_0^\infty (t+\tau)^{-\frac{3}{2}}\tau^{-\frac{1}{2}}\,d\tau                  & \text{if } i=n,
    \end{cases}\\\leq& Ct^{-1}.
 \end{aligned}
 \end{equation}
From this, together with (\ref{two}), one has
\begin{equation}\label{I_{142}}\begin{aligned}&\|I_{142}\|_{L^1(\mathbb{R}^n_+)}\\\leq&\int_0^t\int_{\mathbb{R}^n_+}\|L_{ij}(x,t)-L_{ij}(x,t-s)\|_{L^1_x(\mathbb{R}^n_+)}G^{(1)}_{t-s}(y_n)|u_nu_j|\,dy\,ds\\\leq& C\int_0^t\int_{\mathbb{R}^n_+} \left(\int_0^1s\|\partial_tL_{ij}(x,t-\theta s)\|_{L^1_x(\mathbb{R}^n_+)}\,d\theta\right)(t-s)^{-\frac{1}{2}}e^{-\frac{y_n^2}{4(t-s)}}|u_nu_j|\,dy\,ds\\\leq &C\int_0^t\left(\int_0^1 (t-\theta s)^{-1}s \,d\theta\right)(t-s)^{-\frac{1}{2}}\| \boldsymbol{u}\|^2_{L^2(\mathbb{R}^n_+)}\,ds\\\leq& C\int_0^t\log \frac{t}{t-s}(t-s)^{-\frac{1}{2}}(1+s)^{-\frac{n+2}{2}}\,ds\\ \leq& C\int_0^\frac{t}{2}\log \frac{t}{t-s}(t-s)^{-\frac{1}{2}}(1+s)^{-\frac{n+2}{2}}\,ds+C\int_\frac{t}{2}^t\log \frac{t}{t-s}(t-s)^{-\frac{1}{2}}(1+s)^{-\frac{n+2}{2}}\,ds\\\leq& Ct^{-\frac{1}{2}}\int_0^\frac{t}{2}(1+s)^{-\frac{n+2}{2}}\,ds+C\left(1+\frac{t}{2}\right)^{-\frac{n+2}{2}}t^{\frac{1}{2}}\int_{\frac{1}{2}}^1\log\frac{1}{1-s}(1-s)^{-\frac{1}{2}}\,ds \\\leq& Ct^{-\frac{1}{2}}.\end{aligned}.\end{equation} Combining (\ref{qiuhe}),(\ref{4.60}),(\ref{i13}),(\ref{i141}) and (\ref{I_{142}})  finishes the proof of the lemma.
       \end{proof} 

    \begin{proof}[Proof of Case (1) in Theorem \ref{thm:1.2}]
    First, the estimate (\ref{1.10}) can be derived directly from (\ref{linear}) in Theorem \ref{thm:1.1} and Lemma \ref{lem:4.1}.

     Next we  establish the first sufficient condition on initial data for $\boldsymbol{u}(\cdot,t)\notin L^1$. For $n\geq 3$, it follows from (\ref{one}) in Lemma \ref{lem:decay} that there exists a constant $C > 0$, depending only on $n$,  such that the solution $\boldsymbol{u}$ of (\ref{1.1}) satisfies
\begin{equation}
\Vert \boldsymbol{u}(t)\Vert_{L^2(\mathbb{R}^n_+)} \leq C\left(K(\boldsymbol{a})\right)^{\frac{1}{2}}(1+t)^{-\frac{n}{4}}\quad\text{for }t>0,
\end{equation}
where $K(\boldsymbol{a})$ is defined in (\ref{qingk}).

       Therefore, for $j<n$, one has\begin{equation}\label{eng}
            \begin{aligned}
                &\left|\int_0^t\int_{\mathbb{R}^n_+}G_{t-s}^{(1)}(y_n)u_nu_j(y,s)\,dy\,ds\right|\\\leq &C \int_0^t(t-s)^{-\frac{1}{2}}\Vert \boldsymbol{u}(\cdot,s)\Vert_{L^2(\mathbb{R}^n_+)}^2\,ds\\\leq &CK(\boldsymbol{a})\int_0^t(t-s)^{-\frac{1}{2}}(1+s)^{-\frac{n}{2}}\,ds\\\leq &CK(\boldsymbol{a})\left(\int_0^{\frac{t}{2}}(t-s)^{-\frac{1}{2}}(1+s)^{-\frac{n}{2}}\,ds+\int^t_{\frac{t}{2}}(t-s)^{-\frac{1}{2}}(1+s)^{-\frac{n}{2}}\,ds\right)\\\leq &C_1K(\boldsymbol{a})t^{-\frac{1}{2}},
            \end{aligned}
        \end{equation}
        where we used $n\geq 3$ in the fourth line.

        For $n=2$, it follows from (\ref{two}) in Lemma \ref{lem:decay} that\begin{equation}
\Vert \boldsymbol{u}(t)\Vert_{L^2(\mathbb{R}^n_+)} \leq C\left(K(\boldsymbol{a})\right)^{\frac{1}{2}}(1+t)^{-\frac{n+2}{4}}\quad\text{for }t>0.
\end{equation} One has
\begin{equation}
    \left|\int_0^t\int_{\mathbb{R}^n_+}G_{t-s}^{(1)}(y_n)u_nu_j(y,s)\,dy\,ds\right|\leq C_1K(\boldsymbol{a})t^{-\frac{1}{2}}
\end{equation}by the same argument as (\ref{eng}).

        On the other hand, there exists a $T_1>0$ large enough such that for $t>T_1$\begin{equation}
            \begin{aligned}
               \left|\int_{\mathbb{R}^n_+}G_t^{(1)}(y_n)y_j a_n(y)\,dy\right|=C_2t^{-\frac{1}{2}}\left|\int_{\mathbb{R}^n_+}e^{-\frac{y_n^2}{4t}}y_j a_n(y)\,dy\right|\geq \frac{C_2}{2}t^{-\frac{1}{2}}\left|\int_{\mathbb{R}^n_+}y_j a_n(y)\,dy\right|.
            \end{aligned}
        \end{equation}
        Here the dominated convergence theorem has been used.

      Comparing the linear and nonlinear parts and choosing $\epsilon= \frac{C_2}{2C_1}$ in (\ref{epsilon}), yield
\[
\left|\mathscr{A}_j[\boldsymbol{a},\boldsymbol{u}](t)\right| \geq \left| \int_{\mathbb{R}^n_+} G_t^{(1)}(y_n) y_j a_n(y) \,dy \right|
- \left| \int_0^t \int_{\mathbb{R}^n_+} G_{t-s}^{(1)}(y_n) u_n u_j(y,s) \,dy \,ds \right| > 0\quad \text{for } t>T_1.
\]
This finishes the proof of Case (1) in Theorem \ref{thm:1.2}.\end{proof}\section{Pointwise lower bound estimates for solutions of the Navier--Stokes system}\label{sec:5}
        In this section, we prove Case (2) of Theorem \ref{thm:1.2}. The next lemma shows that the same  estimate as (\ref{1.10}) holds for strong solution with pointwise decay. Here we can omit the condition on $\nabla \boldsymbol{a}$ because the weighted norm $\||x|^{\frac{\alpha}{2}}\boldsymbol{u}\|_{L^2(\mathbb{R}^n_+)}$ for $\alpha\in (0,1)$ of strong mild solution is already controlled by the pointwise assumptions on the initial data.
\begin{Lemma}\label{lem:5.1}
    Assume  the initial data $\boldsymbol{a}\in L^1(\mathbb{R}^n_+)\cap L^n(\mathbb{R}^n_+)$ satisfies (\ref{origin}) and the same condition in Case (2) of Theorem \ref{thm:1.2}, then the unique strong solution $\boldsymbol{u}\in L^\infty(\mathbb{R}^n_+\times (0,T_2))$ of (\ref{1.1}) given by Lemma \ref{lem:2.3} satisfies for  $t\in (0,T_2)$
    \begin{equation}
\begin{aligned}
    \biggl\lVert u_i(x,t) - \sum_{j=1}^{n-1} L_{ij}(x,t) \mathscr{A}_j[\boldsymbol{a},\boldsymbol{u}](t) \biggr\rVert_{L^1(\mathbb{R}^n_+)} 
    \leq  C t^{-\frac{1}{2}} (1 + t^{-\frac{1}{2}}).
\end{aligned}
\end{equation}
\end{Lemma}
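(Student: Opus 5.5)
We follow the structure of the proof of Lemma \ref{lem:4.1}, combined with (\ref{linear}) of Theorem \ref{thm:1.1} for the linear part. The conditions (\ref{origin}) and $\theta>n$ in (\ref{cond:decay}) give $\boldsymbol{a}\in L^1$, $x_n\boldsymbol{a}\in L^1$ and $|x'|^2a_n\in L^1$, so Theorem \ref{thm:1.1}(1) yields
\[
\Bigl\|[e^{-t\mathbb{A}}\boldsymbol{a}]_i+\sum_{j<n}L_{ij}(x,t)\int_{\mathbb{R}^n_+}G^{(1)}_t(y_n)y_ja_n\,dy\Bigr\|_{L^1}\le Ct^{-\frac12}(1+t^{-\frac12}).
\]
Since $\mathscr{A}_j$ carries the minus sign in front of the linear coefficient, it remains to show that the Duhamel term minus $\sum_{j<n}L_{ij}(x,t)\int_0^t\int G^{(1)}_{t-s}u_nu_j\,dy\,ds$ is $O(t^{-1/2})$ in $L^1$ for $t\in(0,T_2)$.

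For the nonlinear part we use the representation (\ref{4.40}) with $\mathscr{F}=\boldsymbol{u}\otimes\boldsymbol{u}$ and split into $I_1,\dots,I_9$ exactly as in (\ref{4.45}), and $I_1=I_{11}+\dots+I_{14}$, $I_{14}=I_{141}+I_{142}+I_{143}$. Here $I_{143}$ is precisely the nonlinear contribution to $-\sum_j L_{ij}\mathscr{A}_j$. Everywhere the energy decay (\ref{two}) is replaced by the pointwise bound of Lemma \ref{lem:2.3}, $|\boldsymbol{u}(y,s)|\le C(1+|y|+\sqrt s)^{-n}$. This gives
\[
\|\boldsymbol{u}(s)\|_{L^2}^2\le C,\qquad \int_{\mathbb{R}^n_+}|y'|^{\alpha}|\boldsymbol{u}(y,s)|^2\,dy\le C\quad(\alpha<n),
\]
uniformly for $s<T_2$. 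The kernel bounds (\ref{2.9}), (\ref{6.47}) and (\ref{heat}) then control $I_2$--$I_9$, $I_{11}$, $I_{12}$ and $I_{141}$ by $C\int_0^t(t-s)^{-1/2}\,ds\le Ct^{1/2}$. Since $t<T_2$ is bounded, $t^{1/2}\le Ct^{-1/2}$. For $I_{13}$ the Hölder interpolation from the proof of Lemma \ref{lem:4.1} gives $C\int_0^t(t-s)^{-\frac{1+\alpha}{2}}\,ds\le Ct^{\frac{1-\alpha}{2}}$, again bounded by $Ct^{-1/2}$ on $(0,T_2)$. For $I_{142}$ we use $\|\partial_tL_{ij}\|_{L^1}\le Ct^{-1}$ and obtain $C\int_0^t\log\frac{t}{t-s}(t-s)^{-1/2}\,ds\le Ct^{1/2}$.

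The main point to check is that the formula (\ref{4.40}) and the manipulations of Lemma \ref{lem:2.4} are legitimate for this solution. They require $F_{nk}|_{y_n=0}=0$, which holds since $u_n$ vanishes on the boundary. They also require enough integrability of $u_nu_j$ and $|y'|^\alpha u_nu_j$, which holds since $\boldsymbol{u}\in L^\infty$ with decay of order $n$, so that $|\boldsymbol{u}|^2$ decays like $|y|^{-2n}$. One also needs that the solution of Lemma \ref{lem:2.3} coincides with the strong $L^n$ solution satisfying the Duhamel formula (\ref{duhamel}); we take this from the uniqueness statement in Lemma \ref{lem:2.3}. Collecting the estimates gives the claimed bound $Ct^{-1/2}(1+t^{-1/2})$.
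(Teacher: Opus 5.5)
Your proposal is correct and matches the paper's proof. The paper likewise reruns the argument of Lemma~\ref{lem:4.1} with the same splitting into $I_1,\dots,I_9$, $I_{11},\dots,I_{14}$ and $I_{141},I_{142},I_{143}$. It notes that only an $L^2$ bound and the weighted bound on $\||x|^{\alpha/2}\boldsymbol{u}\|_{L^2}$ for $\alpha\in(0,1)$ are needed, supplies these via (\ref{two}) and Lemma~\ref{lem:2.3}, and then combines with (\ref{linear}). Your version is, if anything, a slightly cleaner justification: it uses only uniform bounds from Lemma~\ref{lem:2.3} on the bounded interval $(0,T_2)$ and absorbs $t^{1/2}$ and $t^{(1-\alpha)/2}$ into $Ct^{-1/2}$, instead of relying on the global decay rate (\ref{two}).
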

\begin{proof}In the proof of Lemma \ref{lem:4.1}, we only used the following properties
\[
\|\boldsymbol{u}(t)\|_{L^2(\mathbb{R}^n_+)} \leq C(1+t)^{-\frac{n+2}{4}} \quad\text{and}\quad
\||x|^{\frac{\alpha}{2}} \boldsymbol{u}(t)\|_{L^2(\mathbb{R}^n_+)} \leq C(1+t)^{-\frac{n}{4}+\frac{\alpha}{4}}\quad\text{for } \alpha\in(0,1),  
\]
which now follow from (\ref{two}) and Lemma \ref{lem:2.3} respectively.
Hence (\ref{profile}) holds for $t\in (0,T_2)$ by the same argument in Lemma \ref{lem:4.1}. Combining this with (\ref{linear}) in Theorem \ref{thm:1.1} yields Lemma \ref{lem:5.1}.\end{proof}Next we prove  Case (2) in Theorem \ref{thm:1.2}.\begin{proof}[Proof of Case (2) in Theorem \ref{thm:1.2}]   First, the estimate (\ref{1.10}) holds for $t\in (0,T_2)$ by Lemma \ref{lem:5.1} directly. Then we prove the spatial lower bound (\ref{lower}).

\textbf{Step 1. Nonlinear part $\int_0^t e^{-(t-s)\mathbb{A}}\mathbb{P}\nabla\cdot (\boldsymbol{u}\otimes \boldsymbol{u})\,ds$.} It follows from (\ref{cond:decay}) and Lemma \ref{lem:2.3} that$$
        |\boldsymbol{u}(x,t)| \leq \frac{C}{(1+|x|+\sqrt{t})^{n}} \quad\text{for } t\in(0,T_2).
    $$
From  (\ref{6.47}), (\ref{heat}) and Lemma \ref{lem:2.5}, using the definition of $I_m$ for $m=1,2,\dots,9$, $I_{11}$ and $I_{12}$ in (\ref{4.45}) and (\ref{I1}), yields for $t\in (0,T_2)$    
\begin{equation}\label{4.58}
    \begin{aligned}
        &\left|\sum_{m=2}^9I_m+I_{11}+I_{12}\right|\leq C\int_{0}^{t}\int_{\mathbb{R}^{n}_{+}}(|x-y|+\sqrt{t-s})^{-(n+1)}(1+|y|+\sqrt{s})^{-2n}\,dy\,ds\\\leq&C\int_{0}^{t}\int_{\mathbb{R}^{n}_{+}}(|x-y|+\sqrt{t-s})^{-(n+1)}(1+|y|+\sqrt{s})^{-(n+1)}\,dy\,ds\leq  C(1+|x|+\sqrt{t})^{-(n+1)}t^{\frac{1}{2}}.
    \end{aligned}
\end{equation}
 Recall the definition (\ref{domain}) of $\Lambda_{\sigma}$, $I_{13}$ in (\ref{I1}) and $I_{141}$ in (\ref{I14}), it follows from (\ref{2.9}) that for $x\in \Lambda_{\sigma}$ with $\sigma>1$ and $t\in (0,T_2)$, one has
\begin{equation}\label{4.61}
    \begin{aligned}
        |I_{13}|&=\sum_{j=1}^{n-1}\int_0^t\int_{\mathbb{R}^n_+}\partial_{y_n}[M_{ij}(x,y,t-s)-M_{ij}(x,0',y_n,t-s)](u_nu_j)(y,s)\,dy\,ds\\&\leq C\int_0^t\int_{\mathbb{R}^n_+}\left|\int_0^1\nabla_{\eta y'}' M_{ij}(x,\eta y',y_n,t-s)d\eta\right| |y'u_nu_j(y,s)|\,dy\,ds\\&\leq C\frac{1}{x_n^{n+1}}\int_0^t\int_{\mathbb{R}^n_+} |y'u_nu_j(y,s)|\,dy\,ds\\&\leq C\frac{t(1+\sqrt{t})^{-(n-1)}}{|x|^{n+1}}  \end{aligned}
\end{equation}
and
 \begin{equation}\label{4.63}
     |I_{141}|\leq C\int_0^t\int_{\mathbb{R}^n_+}(|x'|+x_n+y_n+\sqrt{t-s})^{-n}(x_n+\sqrt{t-s})^{-1}|u_nu_j(y,s)|\,dy\,ds\leq C\frac{t(1+\sqrt{t})^{-n}}{|x|^{n+1}}.
 \end{equation}
 Note that for $x\in\Lambda_{\sigma}$ with $\sigma>1$, $t>0$, $i\neq n,j<n$ and$\ j\neq i$,
 $$\begin{aligned}
     |\partial_t{L_{ij}(x,t)}|&=4\left|\int_0^\infty\partial_t\partial_{x_i}\partial_{x_j}[G_{\tau+t}^{(n-1)}(x')G_\tau^{(1)}(x_n)]\,d\tau\right|\\&\leq C\int_0^\infty\frac{|x_ix_j||x'|^2}{|x|^{n+8}}\frac{|x|^{n+8}}{\tau^{\frac{n}{2}+4}}e^{-(\frac{x_n}{|x|})^2\frac{|x|^2}{4\tau}}\,d\tau\\&\leq C\frac{|x_ix_j||x'|^2}{|x|^{n+6}}\int_0^\infty\lambda^{n+8}e^{-\frac{\sigma^2}{1+\sigma^2}\frac{\lambda^2}{4}}\,d\lambda\\&\leq C\frac{1}{|x|^{n+2}}.     
 \end{aligned}$$

 For $i=n,j<n$ or $i=j<n$, one can  get the above bound similarly.
 Therefore, for $i=1,2,\dots,n,j<n,t\in (0,T_2)$ and $x\in\Lambda_{\sigma}$ with $\sigma>1$, it holds that 
 \begin{equation}\label{4.64}\begin{aligned}&\left|\int_0^t\int_{\mathbb{R}^n_+}[L_{ij}(x,t)-L_{ij}(x,t-s)]G^{(1)}_{t-s}(y_n)u_nu_j\,dy\,ds\right|\\\leq&\int_0^t\int_{\mathbb{R}^n_+} \left(\int_0^1s|\partial_tL_{ij}(x,t-\eta s)|d\eta\right)(t-s)^{-\frac{1}{2}}e^{-\frac{y_n^2}{4(t-s)}}u_nu_j\,dy\,ds\\\leq &C\frac{1}{|x|^{n+2}}\int_0^t\int_{\mathbb{R}^n_+}s(t-s)^{-\frac{1}{2}}\lVert \boldsymbol{u}\rVert^2_{L^2(\mathbb{R}^n_+)}\,ds\\\leq &C\frac{t^{\frac{1}{2}}(1+\sqrt{t})^{-n}}{|x|^{n+2}}.\end{aligned}.\end{equation}

\textbf{Step 2.  Linear part $\boldsymbol{v}=e^{-t\mathbb{A}}\boldsymbol{a}$.}
It follows from (\ref{3.26})--(\ref{3.14}) that for any $t>0$ and $i=1,2,\dots,n$, one has\begin{equation}
    \begin{aligned}
        v_i(x,t)&=\int_{\mathbb{R}^n_+}[G_t(x-y)-G_t(x-y^*)]a_i(y)\,dy\\&\quad-\int_{\mathbb{R}^{n}_+}(\nabla_{x'}'\partial _{y_n}M^*_i(x',x_n,0',y_n,t)\cdot y' )a_n(y',y_n)\,dy'\,dy_n\\&\quad+\int_{\mathbb{R}^n_+}\partial_{y_n}\left(\int_0^1y'\nabla^2_{y'}M^*_i(x,\eta y',y_n,t)y'^T(1-\eta)d\eta\right) a_n(y',y_n)\,dy'\,dy_n\\&\quad-\sum_{k=1}^{n-1}4\partial_{x_k}\int_0^\infty\partial_{x_i}\left[G_{t+\tau}^{(n-1)}(x')G_\tau^{(1)}(x_n)\right]
\,d\tau\int_{\mathbb{R}^n_+}G_t^{(1)}(y_n)y_k a_n(y',y_n)\,dy'\,dy_n\\&=J_1+J_2+J_3+J_4.
    \end{aligned}
\end{equation}It follows from the pointwise estimates of heat kernel (\ref{heat}) that for $|x|>1$ and $t>0$, one has
\begin{equation}
    \begin{aligned}
        |J_1|&\leq\left|\int_{|y|\leq\frac{|x|}{2}}[G_t(x-y)-G_t(x-y^*)]a_i(y)\,dy\right|+\left|\int_{|y|>\frac{|x|}{2}}[G_t(x-y)-G_t(x-y^*)]a_i(y)\,dy\right|\\&\leq C\frac{t^{\frac{\theta-n}{2}}}{(|x|+\sqrt{t})^\theta}\int_{\mathbb{R}^n_+}|a_i(y)|\,dy +C\frac{1}{|x|^\theta}\int_{\mathbb{R}^n_+}|G_t(x-y)-G_t(x-y^*)|\,dy\\&\leq C\frac{1+t^{\frac{\theta-n}{2}}}{|x|^\theta},
    \end{aligned}
\end{equation}
where (\ref{cond:decay}) has been used.
Note that $|J_1|$ is uniformly bounded with respect to  $x$ and $t$, one has \begin{equation}|J_1|\leq C\frac{1+t^{\frac{\theta-n}{2}}}{(1+|x|)^\theta}.\end{equation}

Recall the pointwise estimates (\ref{2.9}) of $M(x,y,t),$ we obtain for $x\in\Lambda_{\sigma}$ with $\sigma>1$ and $t>0$
\begin{equation}\label{4.71}
    |J_2|\leq C\int_{\mathbb{R}^{n}_{+}}(x_n+\sqrt{t})^{-1}(|x'| + x_n + y_n + \sqrt t )^{-n}|y'||a_n(y)|\,dy\leq \frac{C}{|x|^{n+1}} 
\end{equation}\label{4.72}and\begin{equation}
    |J_3|\leq C\int_{\mathbb{R}^{n}_{+}}t^{-\frac{1}{2}}\left(\int_0^1(|x'-\eta y'| + x_n + y_n + \sqrt{t} )^{-(n+1)}(1-\eta)d\eta\right)|y'|^2|a_n(y)|\,dy\leq \frac{Ct^{-\frac{1}{2}}}{|x|^{n+1}}.
\end{equation} 
By the same argument as (\ref{arg}),  we find that if the coefficient of $L_{ii}(x,y,t)$
 $$\mathscr{A}_i[\boldsymbol{a},\boldsymbol{u}](t)\neq 0\quad \text{for some }i=1,2,\dots,n-1, $$
 there exist  $\widetilde{M}(t)>1$ large enough and $\widetilde{C}(t)>0$ such that for $x\in \Lambda_{\widetilde{M}(t)}\cap \left(B(0,\widetilde{M}(t))\right)^c$, one has
\begin{equation}
\frac{\widetilde{C}(t)}{|x|^n}\leq|J_4-I_{143}|=\left|\sum_{j=1}^{n-1}L_{ij}(x,t)\mathscr{A}_j[\boldsymbol{a},\boldsymbol{u}](t)\right|\leq \frac{\widetilde{C}(t)}{|x|^n}.
\end{equation}

 Hence, for fixed $t>0$, all the other terms decay faster than order $n$ for $x\in \Lambda_\sigma\cap \left(B(0,\sqrt{t})\right)^c$ with $\sigma>1$ except $J_4$ and $I_{143}$.  Thus, for any $i<n$, if 
 $\mathscr{A}_i[\boldsymbol{a},\boldsymbol{u}](t)\neq 0,$
  there exist $M(t)>1$ large enough and $C(t) > 0$ such that  $$|\boldsymbol{u}|\geq|u_i|>\frac{C(t)}{|x|^n} \quad\text{for }  x\in\Lambda_{M(t)}\cap \left(B(0,M(t))\right)^c.$$ This finishes the proof of the Case (2) of Theorem \ref{thm:1.2}.
      \end{proof}

\section{Short time behavior for solutions of the Navier--Stokes system in $L^1(\mathbb{R}^n_+)$}\label{sec:6}

 In this section, we first study the initial vanishing rate of leading terms
$\sum_{j=1}^{n-1}L_{ij}(x,t)\mathscr{A}_j(t)$
as $t$ tends to zero. Next we establish the second sufficient condition for $\boldsymbol{u}(\cdot,t)\notin L^1(\mathbb{R}^n_+)$ for $t$ near zero.

\begin{proof}[Proof of  Theorem~\ref{thm:1.3}] 
If $\boldsymbol{a}\in L^2_\sigma(\mathbb{R}^n_+)$ satisfies (\ref{p1}),  then one has for $t>0$ that
$$\begin{aligned}
    \left|\int_{\mathbb{R}^n_+} G^{(1)}_t(y_n)y_ja_n(y)\,dy\right|&\leq\int_0^\infty G^{(1)}_t(y_n)\int_{\mathbb{R}^{n-1}}|y_j a_n(y)|\,dy'\,dy_n\\&\leq C \int_0^\infty G^{(1)}_t(y_n)\frac{y_n^b}{(1+y_n)^b}\,dy_n\\&\leq Ct^{\frac{b}{2}}\int_0^\infty \lambda^be^{-\frac{\lambda^2}{4}}\,d\lambda\\&\leq Ct^{\frac{b}{2}},
\end{aligned}$$
    and\begin{equation}\label{6.29}\begin{aligned}
       &\left| \int_0^t\int_{\mathbb{R}^n_+}G_{t-s}^{(1)}(y_n)u_nu_j(y,s)\,dy\,ds\right|\leq \int_0^t(t-s)^{-\frac{1}{2}}\left|\int_{\mathbb{R}^n_+}u_nu_j(y,s)\,dy\right|\,ds \leq C\Vert \boldsymbol{u}\Vert^2_{L^2(\mathbb{R}^n_+)}t^{\frac{1}{2}}.
    \end{aligned}\end{equation}
    Then  there exists a small $T'>0$ such that  $$\left|\mathscr{A}_j[\boldsymbol{a},\boldsymbol{u}](t)\right|\leq Ct^{\min\{\frac{1}{2},\frac{b}{2}\}} \quad\text{for } t\in (0,T').$$

    If $\boldsymbol{a}\in L^2_\sigma(\mathbb{R}^n_+)$ satisfies (\ref{p2}),  we also have
    $$\left|\int_{\mathbb{R}^n_+} G^{(1)}_t(y_n)y_ja_n(y)\,dy\right|\leq Ct^{\frac{b_2}{2}}.$$ It follows from Lemma \ref{lem:2.6} and $\bar{b}>n$ that \begin{equation}\label{u0}|\boldsymbol{u}(x)|\leq \frac{Cx_n^{b_1}}{(1+x_n)^{b_1}(1+|x|)^{n}}\quad\text{for }t\in (0 , T_3).\end{equation}
    Therefore, one has\begin{equation}\label{6.30}\begin{aligned}
       &\left| \int_0^t\int_{\mathbb{R}^n_+}G_{t-s}^{(1)}(y_n)u_nu_j(y,s)\,dy\,ds\right|\\\leq &\int_0^t\int_0^\infty G^{(1)}_{t-s}(y_n)\left|\int_{\mathbb{R}^{n-1}}u_nu_j(y,s)\,dy'\right|\,dy_n\,ds\\\leq &C\int_0^t\int_0^\infty G^{(1)}_{t-s}(y_n)\frac{y_n^{2b_1}}{(1+y_n)^{2b_1}}\,dy_n \,ds\\\leq &C\int_0^t (t-s)^{b_1}\int_0^\infty \lambda^{2b_1}e^{-\frac{\lambda^2}{4}}\,d\lambda \,ds\\\leq &Ct^{b_1+1}.
    \end{aligned}\end{equation}
    Then  there exists a small $T''>0$ such that $$\left|\mathscr{A}_j[\boldsymbol{a},\boldsymbol{u}](t)\right|\leq Ct^{\min\{{b_1+1,\frac{b_2}{2}}\}}\quad \text{for } t\in (0,T'').$$ This finishes the proof of the theorem.
\end{proof}
Finally, we confirm the second sufficient condition such that $\boldsymbol{u}(\cdot,t)\notin L^1(\mathbb{R}^n_+)$. 
\begin{proof}[Proof of Case (3) in Theorem \ref{thm:1.2}] The estimate (\ref{1.10}) holds for $t\in (0,T_3)$, which follows directly from Lemma \ref{lem:5.1}.
   Hence  $\boldsymbol{u}(\cdot,t)\notin L^1(\mathbb{R}^n_+)$ provided  $\mathscr{A}_j[\boldsymbol{a},\boldsymbol{u}](t)\neq 0$ for some $j=1,2,\dots,n-1$.

    If $\boldsymbol{a}(x)$ satisfies (\ref{p0}),
  similar to (\ref{6.30}), using (\ref{u0}),  one has
    \begin{equation}\label{uu}\left| \int_0^t\int_{\mathbb{R}^n_+}G_{t-s}^{(1)}(y_n)u_nu_j(y,s)\,dy\,ds\right|\leq Ct^{b_1+1}.\end{equation}

    Note that $\boldsymbol{a}$ satisfies (\ref{condition1}) or (\ref{condition2})
   and for $j=1,2,\dots,n-1$ \begin{equation}\label{6.31}
        \begin{aligned}
            &\int_{\mathbb{R}^n_+} G_t^{(1)}(y_n)y_ja_n(y)\,dy\\=&\int_0^\infty G_t^{(1)}(y_n)\int_{\mathbb{R}^{n-1}}y_ja_n(y)\,dy'\,dy_n\\=&\int_0^{\epsilon_0} G_t^{(1)}(y_n)\int_{\mathbb{R}^{n-1}}y_ja_n(y)\,dy'\,dy_n+\int_{\epsilon_0}^\infty G_t^{(1)}(y_n)\int_{\mathbb{R}^{n-1}}y_ja_n(y)\,dy'\,dy_n.
        \end{aligned}
    \end{equation}
     For $t\in (0,\epsilon^2_0)$, the change of variables $\lambda=\frac{y_n}{\sqrt{t}}$ gives \begin{equation}\label{6.32}
        \begin{aligned}
            &\left|\int_0^{\epsilon_0} G_t^{(1)}(y_n)\int_{\mathbb{R}^{n-1}}y_ja_n(y)\,dy'\,dy_n\right|=\int_0^{\epsilon_0} G_t^{(1)}(y_n)\left|\int_{\mathbb{R}^{n-1}}y_ja_n(y)\,dy'\right|\,dy_n\\\geq &C\int_0^{\epsilon_0} G_t^{(1)}(y_n)y_n^{b_2}\,dy_n=Ct^{\frac{b_2}{2}}\int_0^{\frac{\epsilon_0}{\sqrt{t}}}\lambda^{b_2}e^{-\frac{\lambda^2}{4}}\,d\lambda\geq Ct^{\frac{b_2}{2}}\int_0^{1}\lambda^{b_2}e^{-\frac{\lambda^2}{4}}\,d\lambda\geq Ct^{\frac{b_2}{2}}
        \end{aligned}
    \end{equation}
    and 
    \begin{equation}\label{6.33}
        \begin{aligned}          &\left|\int_{\epsilon_0}^\infty G_t^{(1)}(y_n)\int_{\mathbb{R}^{n-1}}y_ja_n(y)\,dy'\,dy_n\right|\leq\left|\int_{\epsilon_0}^\infty G_t^{(1)}(y_n)\frac{y_n^{b_1}}{(1+y_n)^{b_1}}\,dy_n\right|\leq\int_{\frac{\epsilon_0}{\sqrt{t}}}^\infty\lambda^{b_1}e^{-\frac{\lambda^2}{4}}\,d\lambda\leq Ce^{-\frac{\epsilon_0^2}{8t}}.
        \end{aligned}
    \end{equation}
    It follows from (\ref{6.31})--(\ref{6.33}) that for any $t\in (0,\epsilon_0^2)$ small enough\begin{equation}\label{a}\left|\int_{\mathbb{R}^n_+} G^{(1)}_t(y_n)y_ja_n(y)\,dy\right|>Ct^{\frac{b_2}{2}}.\end{equation}
    This, together with (\ref{uu}) and $b_2<2b_1+2$ implies that there exists a small $T_4\in (0,\min\{1,\epsilon_0^2,T_3\})$ such that for $t\in(0,T_4)$, $$|\mathscr{A}_j[\boldsymbol{a},\boldsymbol{u}](t)|\geq\left|\int_{\mathbb{R}^n_+}G_t^{(1)}(y_n)y_j a_n(y)\,dy\right|-\left|\int_0^t\int_{\mathbb{R}^n_+}G_{t-s}^{(1)}(y_n)u_nu_j(y,s)\,dy\,ds\right|>Ct^{\frac{b_2}{2}}-Ct^{b_1+1}>0.$$ This finishes the proof of Case (3) in Theorem \ref{thm:1.2}.
\end{proof}




\begin{appendices}
\appendixnumbering 
\renewcommand{\thesection}{\textbf{Appendix \Alph{section}. Construction of the initial data}\hspace{1em}}

\section{} 

In this appendix, we construct a large class of initial data which satisfy the condition in Case (3) of Theorem \ref{thm:1.2}.

The space of continuous functions on $\overline{\mathbb{R}^n_+}$ is denoted by
\[
C^0(\overline{\mathbb{R}^n_+}) := \big\{ f: \overline{\mathbb{R}^n_+} \to \mathbb{R} \;\big|\; \text{$f$ is continuous up to the boundary } \{x_n = 0\} \big\}.
\]
\begin{Lemma}\label{A.2}
For any $b\geq 1$ and nonempty bounded open set $K=K_1\times K_2\times\cdots\times K_{n-1} \subset \mathbb{R}^{n-1}$, there exists a divergence-free vector field $\boldsymbol{a}=(a_1,a_2,\dots,a_n) \in C^{0}(\overline{\mathbb{R}^n_+})$ such that:
\begin{enumerate}
    \item $\mathrm{supp}\{\boldsymbol{a}\} \subseteq K \times [0,1]$.
    \item The normal component satisfies $a_n(x) = x_n^b\psi_n(x') + o(x_n^b)$ as $x_n \to 0^+$.
    \item The tangential components  satisfy  $a_i(x) =b x_n^{b-1}\psi_i(x') + o(x_n^{b-1})$ as $x_n \to 0^+$ for $i=1,2,\dots,n-1$.
    
\end{enumerate}
Here $\psi=(\psi_1,\psi_2,\dots,\psi_n) \in C_c^\infty(\mathbb{R}^{n-1})$ satisfies $\int_{\mathbb{R}^{n-1}} x_1\psi_n(x') \,dx'\neq 0$.
\end{Lemma}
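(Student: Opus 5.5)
We build $\boldsymbol{a}$ from a vector potential, so that the divergence-free condition holds automatically and the boundary behaviour can be read off directly. Take $\psi_n\in C_c^\infty(K)$ of the form $\psi_n=\partial_{1}\phi$ with $\phi\in C_c^\infty(K)$, and choose $\phi$ so that $\int_{\mathbb{R}^{n-1}} x_1\partial_1\phi\,dx'=-\int\phi\,dx'\neq0$. This is possible for any nonempty open box $K$: take $\phi\geq0$, $\phi\not\equiv0$. Let $\chi\in C^\infty([0,\infty))$ satisfy $\chi\equiv1$ on $[0,\tfrac12]$ and $\chi\equiv0$ on $[1,\infty)$, and set $h(x_n)=x_n^b\chi(x_n)$. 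Then $h\in C^1([0,\infty))$ for $b\geq1$, $h(0)=0$, $h'(x_n)=bx_n^{b-1}$ near $0$, and $\operatorname{supp}h\subset[0,1]$.

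Define
\[
a_n(x)=h(x_n)\,\partial_1\phi(x'),\qquad a_1(x)=-h'(x_n)\,\phi(x'),\qquad a_i\equiv0\ (2\le i\le n-1).
\]
Then $\nabla\cdot\boldsymbol{a}=-h'\partial_1\phi+h'\partial_1\phi=0$ pointwise. Equivalently, $\boldsymbol{a}$ is the curl-type field obtained from the antisymmetric potential $h(x_n)\phi(x')$ in the $(x_1,x_n)$ plane. Each component is continuous up to $x_n=0$ because $h'$ is continuous for $b\ge1$, and the support lies in $K\times[0,1]$. Near the boundary we have $a_n=x_n^b\psi_n(x')$ exactly with $\psi_n=\partial_1\phi$, and $a_1=b x_n^{b-1}\psi_1$ with $\psi_1=-\phi$; we take $\psi_i=0$ for $2\le i\le n-1$. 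So items (2) and (3) hold, even with zero $o(\cdot)$ error. Finally, $\int x_1\psi_n\,dx'=-\int\phi\neq0$.

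It remains to check that the distributional divergence identity with test functions in $C_c^\infty(\overline{\mathbb{R}^n_+})$, as required in Lemma \ref{A.1}, holds. This amounts to $a_n|_{x_n=0}=0$, which follows from $h(0)=0$. For $b\ge1$ this is straightforward.

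The only delicate point is $b=1$. There $a_1=-\phi$ at the boundary, which is nonzero but only continuous, and that is allowed. Also, for noninteger $b$ the factor $h'$ is merely continuous at $0$, so the divergence computation must be understood classically in $x_n>0$ and then extended by the boundary trace argument. I expect this to be the main, though mild, obstacle; everything else is explicit.
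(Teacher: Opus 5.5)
Your construction is correct and is essentially the paper's own. The paper sets $a_n=x_n^b\phi(x_n)\,\partial_{x_1}\bigl(\eta_1(x_1)\cdots\eta_{n-1}(x_{n-1})\bigr)$ and $a_1=-\eta_1(x_1)\cdots\eta_{n-1}(x_{n-1})\,\partial_{x_n}\bigl(x_n^b\phi(x_n)\bigr)$ (written as $-\bigl(\int_{-\infty}^{x_1}\psi_n\,dy_1\bigr)\partial_{x_n}(x_n^b\phi(x_n))$), with all other $a_i=0$; there the product of the $\eta_i$ plays the role of your $\phi(x')$, and the cutoff $\phi(x_n)$ with $\phi(0)=1$ plays the role of your $\chi$. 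Your departures are cosmetic (a nonnegative bump instead of a product, a cutoff identically $1$ near $x_n=0$ so the expansions are exact, and an explicit check that $a_n|_{x_n=0}=0$), and the ``delicate point'' you flag is not a real obstacle: $\partial_1 a_1$ and $\partial_n a_n$ involve only $h'$, which is continuous up to $x_n=0$.
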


\begin{proof}
First, define 
\[\psi_n(x') = \partial_{x_1}\eta_1(x_1)\eta_2(x_2)\cdots\eta_{n-1}(x_{n-1})\] where $\eta_i \in C_c^\infty(K_i)$ and $\int_{K_i}\eta_i(x_i)\,dx_i\neq 0$ for $i=1,2,\dots,n-1,$
     and choose $\phi \in C_c^\infty([0,1))$ with $\phi(0)=1$. It holds that 
     \[\int_{\mathbb{R}^{n-1}} x_1\psi_n(x') \,dx'=-\prod_{i=1}^{n-1}\int_{K_i}\eta_i(x_i)\,dx_i\neq 0.\]
Then, define the normal component as
\[
a_n(x) = x_n^b\psi_n(x')\phi(x_n).
\]
Finally, the tangential components are constructed by solving $\nabla \cdot \boldsymbol{a} = 0$:
\[
a_1(x) = -\left(\int_{-\infty}^{x_1}\psi_n(y_1,x_2,\dots ,x_{n-1}) \,dy_1\right)\partial_{x_n}(x_n^b\phi(x_n)),\quad a_i=0\quad \text{with}\ i\neq 1,n.
\]
Hence the proof of the lemma is finished.
\end{proof}
\end{appendices}

{\bf Acknowledgments.}
This work was partially supported by National Key R\&D Program of China 2024YFA1013302. The research of Wang is supported by NSFC grant 1237122. The research of  Xie is partially supported by  NSFC grants 12571238 and 12426203. 

\bibliographystyle{abbrv}

\end{document}